\begin{document}

\title{Boundedness of operators generated by fractional semigroups associated with Schr\"odinger operators on Campanato type spaces via $T1$ theorem
\thanks{P. Li was financially supported by the National Natural Science Foundation of China (No. 12071272) and   Shandong Natural Science Foundation of China (Nos. ZR2020MA004, ZR2017JL008). C. Zhang was supported by the National Natural Science Foundation of China (No. 11971431), the Zhejiang Provincial Natural Science Foundation of China(Grant No. LY18A010006) and
the first Class Discipline of Zhejiang-A(Zhejiang Gongshang University-Statistics).}
\thanks{Pengtao Li is the corresponding author.}
}
\subtitle{}

\titlerunning{Boundedness of operators on Campanato type spaces}        

\author{Zhiyong Wang \and    Pengtao Li*      \and    Chao Zhang      }

\authorrunning{Z. Wang \and P. Li\and C. Zhang}

\institute{Zhiyong Wang \at
              School of Mathematics and Statistics, Qingdao University, Qingdao, 266071, China \\
              \email{zywang199703@163.com}           
           \and
           Pengtao Li \at
              School of Mathematics and Statistics, Qingdao University, Qingdao, 266071, China\\
              \email{ptli@qdu.edu.cn}
              \and
              Chao Zhang \at
 School of Statistics and Mathematics, Zhejiang Gongshang University, Hangzhou, 310018, China\\
 \email{zaoyangzhangchao@163.com}             }

\date{Received: date / Accepted: date}

\maketitle

\begin{abstract}
Let $\mathcal{L}=-\Delta+V$ be a Schr\"{o}dinger operator, where the nonnegative potential $V$ belongs to the reverse H\"{o}lder class $B_{q}$.
By the aid of the subordinative formula, we estimate the regularities of the fractional heat semigroup,  $\{e^{-t\mathcal{L}^{\alpha}}\}_{t>0},$
associated with $\mathcal{L}$. As an application,  we obtain the $BMO^{\gamma}_{\mathcal{L}}$-boundedness of the maximal function, and the Littlewood-Paley $g$-functions  associated with $\mathcal{L}$  via $T1$ theorem, respectively.
 \end{abstract}
\keywords{Schr\"odinger ooperator\and fractional heat semigroups \and$T1$ theorem \and Campanato type space}
\subclass{35J10 \and 42B20 \and 42B30}
\section{Introduction}
In the research of harmonic analysis and partial differential equations, the maximal operators and Littelwood-Paley $g$-functions paly an important role and were investigated by many mathematicians extensively.
For any integrable function $f$ on $\mathbb R^{n}$, the Hardy-Littlewood maximal operator is defined as
$$M(f)(x):=\sup_{Q}\frac{1}{|Q|}\int_{Q}|f(y)|dy,$$
where the supremum is taken over all cubes $Q\subset \mathbb R^{n}$.  For $f\in BMO(\mathbb R^{n})$, Bennett-DeVore-Sharpley proved in \cite{BDS} that $M(f)$ is either infinite or belongs to $BMO(\mathbb R^{n})$. The boundedness result in \cite{BDS} can be extended to other maximal operators. For example, let $-\Delta$ be the Laplace operator: $\displaystyle \Delta=\sum^{n}_{i=1}\frac{\partial^{2}}{\partial x_{i}^{2}}$. Denote by $M_{\Delta}$ and $g$ the maximal operator and Littlewood-Paley $g$-function generated by the heat semigroup $\{e^{-t(-\Delta)}\}_{t>0}$, respectively, i.e.,
\begin{equation}\label{eq-1.1}
\left\{\begin{aligned}
\displaystyle &M_{\Delta}(f)(x):=\sup_{t>0}|e^{-t(-\Delta)}(f)(x)|;\\
&g(f)(x):=\Bigg(\int_{0}^{\infty}|e^{-t(-\Delta)}(f)(x)|^{2}\frac{dt}{t^{n+1}}\Bigg)^{1/2}.
\end{aligned}\right.
\end{equation}
Due to the mean value on ``large" cubes may be infinite, the $BMO(\mathbb R^{n})$-boundedness of $M_{\Delta}$ or $g$ holds if  $M_{\Delta}(f)<\infty$ or $g(f)<\infty$ for $f\in BMO(\mathbb R^{n})$.

However, if the Laplacian $-\Delta$ is replaced by other second-order differential operators, the situation becomes different. Consider the Schr\"{o}dinger $\mathcal{L}=-\Delta+V$ in $\mathbb{R}^{n},\ \ n\geq 3,$
where  $V$ is a nonnegative potential belonging to the reverse H\"{o}lder class $B_{q}$ for some $q>n/2$. Here a nonnegative potential $V$ is said to belong to $B_{q}$ if there exists $C>0$ such that for  every ball $B$,
$$\Bigg(\frac{1}{|B|}\int_{B}V^{q}(x)dx\Bigg)^{1/q}\leq \frac{C}{|B|}\int_{B}V(x)dx.$$
In \cite{BDS}, the authors pointed out that for $f\in BMO(\mathbb R^{n})$ a supremum of averages of $f$ over ``large" cubes may be infinite, see \cite[page 610]{BDS}. In 2005, Dziuba\'{n}ski et.al. \cite{DGMTZ} proved the square functions associated with Schr\"odinger operators are bounded on the BMO type space $BMO_{\mathcal{L}}(\mathbb R^{n})$ related with $\mathcal{L}$ which  is distinguished from the case of $BMO(\mathbb R^{n})$. See also \cite{LL} and \cite{wyx} for similar results in the setting of Heisenberg groups and stratified Lie groups.

Let $H=-\Delta+|x|^{2}$ be the harmonic oscillator. In \cite{bet}, Betancor et al. introduced a $T1$ criterion for Calder\'{o}n-Zygmund operators related to $H$ on the BMO type  space $BMO_{H}(\mathbb{R}^{n})$. Later, Ma et al. in \cite{ma2} generalized the $T1$ criterion to the case of Campanato type spaces $BMO_{\mathcal{L}}^{\gamma}(\mathbb{R}^{n})$ related with $\mathcal{L}$. As applications, the authors in  \cite{ma2} proved that the maximal operators associated with the heat
semigroup $\{e^{-t\mathcal{L}}\}_{t>0}$ and with the generalized Poisson operators
$\{P^{\sigma}_{t}\}_{t>0}(0<\sigma<1)$, the Littlewood-Paley g-functions given in terms of the heat and the Poisson semigroups are bounded on $BMO^{\gamma}_{\mathcal{L}}(\mathbb R^{n})$.

Notice that for $\sigma\in(0,1)$, the generalized Poisson operator
$\{P^{\sigma}_{t}\}_{t>0}$ is expressed as
\begin{equation}\label{eq-1.2}
P^{\sigma}_{t}(f)(x):=\frac{t^{2\sigma}}{4^{\sigma}\Gamma(\sigma)}\int^{\infty}_{0}e^{-\frac{t^{2}}{4r}}e^{-r\mathcal{L}}(f)\frac{dr}{r^{1+\sigma}}.
\end{equation}
Specially, for $\sigma=1/2$, $\{P^{1/2}_{t}\}_{t>0}$ is corresponding to the Poisson semigroup $\{e^{-t\mathcal{L}^{1/2}}\}_{t>0}$ associated with $\mathcal{L}$.
The main purpose of this paper is to derive the pointwise estimate and regularity properties of the fractional heat semigroup $\{e^{-t\mathcal{L}^{\alpha}}\}_{t>0}$, $\alpha>0$, to  prove the boundedness of the maximal function and the Littlewood-Paley $g$-functions generated by $\{e^{-t\mathcal{L}^{\alpha}}\}_{t>0}$  on $BMO^{\gamma}_{\mathcal{L}}(\mathbb R^{n})$ via $T1$ theorem, respectively.

When $\mathcal{L}=-\Delta$, the kernels of the fractional heat semigroup $\{e^{-t(-\Delta)^{\alpha}}\}_{t>0}$ can be defined via the Fourier transform, i.e.,
\begin{equation}\label{1.4}
K_{\alpha, t}(x)=(2\pi)^{-n/2}\int_{\mathbb R^n}e^{ix\cdot \xi-t|\xi|^{2\alpha}}d\xi.
\end{equation}
For $\mathcal{L}=-\Delta+V$ with $V>0$, the kernels of fractional heat semigroups $\{e^{-t\mathcal{L}^{\alpha}}\}_{t>0}$, $\alpha\in(0,1)$, can not be defined via (\ref{1.4}).
However, for $\alpha>0$, the subordinative formula (cf.\cite{gri}) indicates that
\begin{equation}\label{p-1}
  K^{\mathcal{L}}_{\alpha,t}(x,y)=\int^{\infty}_{0}\eta^{\alpha}_{t}(s)K^{\mathcal{L}}_{s}(x,y)ds,
\end{equation}
where  $\eta^{\alpha}_{t}(\cdot)$ is a continuous function  on $(0,\infty)$ satisfying (\ref{eq-2.1}) below.
In \cite{Li2}, the identity (\ref{p-1}) was applied to estimate $K^{\mathcal{L}}_{\alpha,t}(\cdot,\cdot)$ via the heat kernel $K^{\mathcal{L}}_{t}(\cdot,\cdot)$, see Proposition \ref{pro2.51}. Specially, for $\alpha=1/2$, the estimates of $K^{\mathcal{L}}_{\alpha,t}(\cdot,\cdot)$ goes back to those of the Poisson kernel $P_{t}^{\mathcal{L}}(\cdot,\cdot)$, see \cite[Lemma 3.9]{duo}.

We point out that, compared with the case of $\{P^{\sigma}_{t}\}_{t>0}$, some new regularity estimates should be introduced  to prove the $BMO^{\gamma}_{\mathcal{L}}$-boundedness of the maximal function and Littlewood-Paley $g$-functions generated by $\{e^{-t\mathcal{L}^{\alpha}}\}_{t>0}$. Let $E=L^{\infty}((0,\infty), dt)$. It follows from (\ref{eq-1.2}) and the Minkowski integral inequality that
$$\|P^{\sigma}_{t}(f)\|_{E}\leq C_{\sigma}\int^{\infty}_{0}t^{2\sigma}e^{-\frac{t^{2}}{4r}}\|e^{-r\mathcal{L}}(f)\|_{E}\frac{dr}{r^{1+\sigma}}.$$
The fact that
$$\int^{\infty}_{0}t^{2\sigma}e^{-\frac{t^{2}}{4r}}\frac{dr}{r^{1+\sigma}}<\infty$$
ensures that the $BMO^{\gamma}_{\mathcal{L}}$-boundedness of the maximal function
$\sup_{t>0}|P^{\sigma}_{t}f(x)|$
can be deduced from that of the heat maximal function $\sup_{t>0}|e^{-t\mathcal{L}}f(x)|$, see \cite[Proposition 4.7]{ma2}. However, we can see from the identity (\ref{p-1}) that this method is not applicable to the case $\{e^{-t\mathcal{L}^{\alpha}}\}_{t>0}$.

In this paper, we get the following results:
\begin{itemize}
\item [$\bullet$]
In Section \ref{sec-2.1}, let $\displaystyle \nabla_{x}=\Big({\partial}/{\partial x_{1}},{\partial}/{\partial x_{2}},\ldots,{\partial}/{\partial x_{n}}\Big)$. By the perturbation theory for semigroups of operators, we deduce the pointwise estimates and the H\"older type estimates of the kernels:
$$\left\{\begin{aligned}
&\Big|\nabla_{x}(K_{t}(x-y)-K^{\mathcal{L}}_{t}(x,y))\Big|,\\
&\Big|t^{m}\partial^{m}_{t}(K_{t}(x-y)-K^{\mathcal{L}}_{t}(x,y))\Big|,\\
\end{aligned}\right.$$
see Lemmas \ref{lem1-1}, \ref{lem2-2} and Proposition \ref{pro2.5}, respectively.

\item [$\bullet$]In Section \ref{sec-2.2}, we use (\ref{1.4}) to obtain the corresponding estimates for
$$\left\{\begin{aligned}
&\Big|K^{\mathcal{L}}_{\alpha,t}(x,y)-K_{\alpha,t}(x-y)\Big|,\\
&\Big|t^{1/2\alpha}\nabla_{x}(K_{\alpha,t}(x-y)-K^{\mathcal{L}}_{\alpha, t}(x,y))\Big|,\\
&\Big|t^{m}\partial^{m}_{t}(K_{\alpha,t}(x-y)-K^{\mathcal{L}}_{\alpha, t}(x,y))\Big|,\\
\end{aligned}\right.$$
see Propositions \ref{pro1}--\ref{pro2-2}, respectively.

\item [$\bullet$]
 In Section \ref{sec-4}, as applications of the regularity estimates obtained in Section \ref{sec-3}, we use the $T1$ criterion established in \cite{ma2} to prove the boundedness on Campanato type spaces $BMO^{\gamma}_{\mathcal{L}}(\mathbb R^{n})$ of the following maximal operator and $g$-functions:
$$\left\{
\begin{aligned}
&M^{\alpha}_{\mathcal{L}}f(x):=\sup_{t>0}|e^{-t\mathcal{L}^{\alpha}}f(x)|;\\
&g^{\mathcal{L}}_{\alpha}(f)(x):=\Bigg(\int^{\infty}_{0}|D^{\mathcal{L},m}_{\alpha,t}(f)(x)|^{2}\frac{dt}{t}\Bigg)^{1/2};\\
& \widetilde{g}^{\mathcal{L}}_{\alpha}f(x):=\Bigg(\int^{\infty}_{0}|\widetilde{D}^{\mathcal{L}}_{\alpha,t}(f)(x)|^{2}\frac{dt}{t}\Bigg)^{1/2},
\end{aligned}
\right.$$
see Theorems \ref{the3}--\ref{the3.2}, respectively, where   $D^{\mathcal{L},m}_{\alpha,t}$ and $\widetilde{D}^{\mathcal{L}}_{\alpha,t}$ are the operators with the integral kernels
\begin{equation}\label{eq-1.3}
\begin{cases}
&D^{\mathcal{L},m}_{\alpha,t}(x,y):=t^{m}\partial_{t}^{m}K^{\mathcal{L}}_{\alpha,t}(x,y),\\
&\widetilde{D}^{\mathcal{L}}_{\alpha,t}(x,y):=t^{1/2\alpha}\nabla_{x}K^{\mathcal{L}}_{\alpha,t}(x,y),
\end{cases}
\end{equation}
respectively.

\end{itemize}

{\it Notations.} We will use $c$ and $C$ to denote the positive constants, which are independent of main parameters and may be different at each occurrence. By $B_{1}\sim B_{2}$, we mean that there exists a constant $C>1$ such that $C^{-1}\leq B_{1}/B_{2}\leq C.$

\section{Preliminaries}\label{sec-2}
\subsection{Schr\"odinger operators and function spaces} In this paper, let $ \delta_{0}=2- n/q$. At first, we list some properties of the potential $V$ which will be used in the sequel.
\begin{lemma}\label{lem2.4}{\rm (\cite[Lemma1.2]{she})}
\item{\rm (i)} For $0<r<R<\infty$,
$$\frac{1}{r^{n-2}}\int_{B(x,r)}V(y)dy\leq C\Big(\frac{r}{R}\Big)^{\delta}\frac{1}{R^{n-2}}\int_{B(x,R)}V(y)dy.$$
\item{\rm (ii)}
$r^{2-n}\int_{B(x,r)}V(y)dy=1$ if $r=\rho(x)$.
 $r\sim \rho(x)$ if and only if
$r^{2-n}\int_{B(x,r)}V(y)dy\sim 1.$

\end{lemma}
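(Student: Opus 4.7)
The plan is to obtain part (i) as a direct consequence of the reverse H\"older hypothesis $V\in B_{q}$ combined with H\"older's inequality, and then to derive part (ii) from (i) together with the definition of the auxiliary function $\rho(x)$.

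For part (i), first I would apply H\"older's inequality with exponents $q$ and $q/(q-1)$ on the ball $B(x,r)$ to get
$$\int_{B(x,r)}V(y)\,dy \leq |B(x,r)|^{1-1/q}\Bigl(\int_{B(x,r)}V(y)^{q}\,dy\Bigr)^{1/q}.$$
Next, since $B(x,r)\subset B(x,R)$, I would enlarge the domain of the $V^{q}$-integral to $B(x,R)$ and then apply the $B_{q}$ condition on the larger ball:
$$\Bigl(\int_{B(x,R)}V(y)^{q}\,dy\Bigr)^{1/q}\leq \frac{C}{|B(x,R)|^{1-1/q}}\int_{B(x,R)}V(y)\,dy.$$
Chaining the two estimates and collecting the volume ratios gives $\int_{B(x,r)}V \leq C(r/R)^{n-n/q}\int_{B(x,R)}V$. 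Finally, multiplying both sides by $r^{2-n}$ and recognizing $n-n/q-(n-2)=2-n/q=\delta_{0}$ yields the claim (with $\delta=\delta_{0}$).

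For part (ii), set $\psi(r):=r^{2-n}\int_{B(x,r)}V(y)\,dy$. The first assertion is essentially the definition of the critical radius: $\psi$ is continuous on $(0,\infty)$ (since $V\in L^{1}_{\rm loc}$), satisfies $\psi(r)\to 0$ as $r\to 0^{+}$ by (i) applied on a small scale (using $n\geq 3$), and $\psi(r)\to\infty$ as $r\to\infty$ when $V\not\equiv 0$; hence there exists $r=\rho(x)$ with $\psi(\rho(x))=1$, and the power-type control in (i) forces this $r$ to be essentially unique at the level of equivalence classes. For the equivalence $r\sim\rho(x)\Leftrightarrow \psi(r)\sim 1$, the forward direction follows by applying (i) to the pair $\{r,\rho(x)\}$ (ordering them by size) and using $\psi(\rho(x))=1$. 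The converse is obtained by reversing the roles: if $\psi(r)\sim 1$, compare $\psi(r)$ to $\psi(\rho(x))=1$ via (i) in both directions ($r<\rho(x)$ and $r>\rho(x)$) to pin down the ratio $r/\rho(x)$ in a bounded range.

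The main technical point is the quantitative power decay $(r/R)^{\delta_{0}}$ in (i), which is where the sub-criticality encoded in $V\in B_{q}$ with $q>n/2$ (equivalently $\delta_{0}>0$) enters; once (i) is in hand, part (ii) is bookkeeping with the definition of $\rho(x)$ and the quasi-monotonicity of $\psi$.
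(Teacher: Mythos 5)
The paper gives no proof of this lemma; it is quoted from Shen's Lemma 1.2, so the comparison is with the standard argument there. Your part (i) is exactly that argument (H\"older's inequality on $B(x,r)$, then the $B_{q}$ inequality on the larger ball $B(x,R)$, yielding the exponent $n-n/q-(n-2)=2-n/q=\delta_{0}>0$ precisely because $q>n/2$), and it is correct as written.

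In part (ii), however, the implication $r\sim\rho(x)\Rightarrow \psi(r)\sim 1$ does not follow from (i) in the way you claim. Estimate (i) is one-sided: it controls $\psi$ at the \emph{smaller} radius by $\psi$ at the \emph{larger} radius. Applied to the pair $\{r,\rho(x)\}$ with $\psi(\rho(x))=1$ it gives $\psi(r)\le C\,(r/\rho(x))^{\delta_{0}}\le C$ when $r\le\rho(x)$, and $\psi(r)\ge C^{-1}(r/\rho(x))^{\delta_{0}}\ge C^{-1}$ when $r\ge\rho(x)$, but it provides no lower bound for $\psi(r)$ when $r$ is slightly below $\rho(x)$ and no upper bound when $r$ is slightly above: nothing in (i) prevents $r\mapsto\int_{B(x,r)}V(y)\,dy$ from jumping as $r$ crosses $\rho(x)$. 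The missing ingredient is the doubling property of the measure $V(y)\,dy$, which is a standard but separate consequence of $V\in B_{q}$ (reverse H\"older weights are $A_{\infty}$, hence doubling); with $\int_{B(x,2s)}V\le C\int_{B(x,s)}V$ and $r^{2-n}\sim\rho(x)^{2-n}$ one compares $\int_{B(x,r)}V$ with $\int_{B(x,\rho(x))}V$ in both directions and gets $\psi(r)\sim\psi(\rho(x))=1$ for $r\sim\rho(x)$. Your converse direction ($\psi(r)\sim1\Rightarrow r\sim\rho(x)$) is fine with (i) alone. A smaller point: since $\rho(x)$ is defined as a supremum, the identity $\psi(\rho(x))=1$ should be obtained by letting $r\uparrow\rho(x)$ through the set $\{\psi\le 1\}$ (giving $\psi(\rho(x))\le1$ by continuity) and $r\downarrow\rho(x)$ through its complement (giving $\psi(\rho(x))\ge1$); the intermediate value argument you sketch only produces some root of $\psi=1$, not that the supremum itself is one.
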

\begin{lemma}\label{lem2.5}{\rm (\cite[Lemma 1.4]{she})}
\item{\rm (i)} There exist $C>0$ and $k_{0}\geq1$ such that for all $x,y\in \mathbb{R}^{n}$,
$$C^{-1}\rho(x)\Big(1+|x-y|/\rho(x)\Big)^{-k_{0}}\leq \rho(y)\leq C\rho(x)\Big(1+|x-y|/\rho(x)\Big)^
{k_{0}/(1+k_{0})}.$$
In particular, $\rho(y)\sim \rho(x)$ if $|x-y|<C\rho(x)$.
\item{\rm (ii)} There exists $l_{0}>1$ such that
$$\int_{B(x,R)}\frac{V(y)}{|x-y|^{n-2}}dy\leq \frac{C}{R^{n-2}}\int_{B(x,R)}V(y)dy\leq C\Big(1+
\frac{R}{\rho(x)}\Big)^{l_{0}}.$$
\end{lemma}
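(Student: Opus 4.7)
The plan is to read Lemma \ref{lem2.5} as a pair of quantitative consequences of Lemma \ref{lem2.4}, turning the doubling/decay estimate in Lemma \ref{lem2.4}(i) and the characterization of $\rho(x)$ in Lemma \ref{lem2.4}(ii) into bounds that transport between centers $x$ and $y$. The single unifying observation is that, via the ball inclusions $B(y,r)\subset B(x,r+|x-y|)$ and vice versa, the normalized integral
\begin{equation*}
\Psi(x,r):=\frac{1}{r^{n-2}}\int_{B(x,r)}V(u)\,du
\end{equation*}
is comparable across shifts of the center, up to a factor $(1+|x-y|/r)^{n-2}$.

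For part (i), I would set $R=|x-y|$ and $r=\rho(y)$, so that $\Psi(y,r)\sim 1$ by Lemma \ref{lem2.4}(ii). The ball inclusion yields $\Psi(y,r)\le (1+R/r)^{n-2}\Psi(x,r+R)$, and therefore a lower bound $\Psi(x,r+R)\gtrsim (1+R/r)^{-(n-2)}$. Feeding this into Lemma \ref{lem2.4}(i), which compares $\Psi(x,\rho(x))\sim 1$ with $\Psi(x,r+R)$ through a factor $(\rho(x)/(r+R))^{\delta}$, produces an algebraic inequality of the form
\begin{equation*}
\Big(\frac{r+R}{\rho(x)}\Big)^{\delta}\lesssim \Big(1+\frac{R}{r}\Big)^{n-2}.
\end{equation*}
Solving this inequality for $r=\rho(y)$ in terms of $\rho(x)$ and $R$ produces the claimed upper bound with some exponent $k_0/(1+k_0)<1$. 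The lower bound on $\rho(y)$ follows by the symmetric argument with $x$ and $y$ interchanged, and the local equivalence $\rho(y)\sim\rho(x)$ for $|x-y|<C\rho(x)$ is read off by setting $1+|x-y|/\rho(x)\sim 1$.

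For part (ii), the first inequality is obtained by a standard dyadic annular decomposition of $B(x,R)$: write $B(x,R)$ as the union of the shells $2^{-k-1}R<|x-y|\le 2^{-k}R$, bound each contribution by $C(2^{-k}R)^{2-n}\int_{B(x,2^{-k}R)}V$, and use Lemma \ref{lem2.4}(i) to sum the geometric series since $\delta=2-n/q>0$. For the second inequality, I would split into two cases. If $R\le\rho(x)$, Lemma \ref{lem2.4}(i) with the pair $(R,\rho(x))$ and the normalization $\Psi(x,\rho(x))\sim 1$ gives $\Psi(x,R)\le C$, which is dominated by $C(1+R/\rho(x))^{l_0}$. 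If $R>\rho(x)$, I would invoke that $V\in B_q$ implies $V\in A_{\infty}$ and hence $V(y)dy$ is a doubling measure, so $\Psi(x,2r)\le C_0\Psi(x,r)$ for a fixed $C_0$. Iterating this doubling $\sim\log_2(R/\rho(x))$ times from radius $\rho(x)$ up to $R$ produces the polynomial growth with $l_0=\log_2 C_0$.

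The main obstacle is keeping track of the exponents in part (i): the asymmetry between the upper exponent $k_0/(1+k_0)$ and the lower exponent $k_0$ is produced entirely by the nonlinear algebraic manipulation above, where the radius $r=\rho(y)$ appears on both sides of the inequality and must be isolated. Everything else reduces to routine dyadic summation and ball-inclusion geometry, with the $B_q$ hypothesis intervening only through Lemma \ref{lem2.4} and the doubling property of $V\,dy$.
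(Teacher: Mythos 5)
The paper itself offers no argument for this lemma---it is quoted verbatim from Shen---so your proposal has to stand on its own. Part (ii) does: the dyadic-annulus decomposition summed via Lemma \ref{lem2.4}(i) (convergent because $\delta>0$) is the standard proof of the first inequality, and iterating the doubling property of $V(y)\,dy$ (a known consequence of the reverse H\"older condition) from radius $\rho(x)$ up to $R$ gives the second with $l_{0}$ essentially $\log_{2}C_{0}$; this is also how Shen argues.

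Part (i), however, has a genuine gap at its central step. Writing $\Psi(x,r)=r^{2-n}\int_{B(x,r)}V(u)\,du$, the inclusion $B(y,r)\subset B(x,r+R)$ gives the \emph{lower} bound $\Psi(x,r+R)\geq (1+R/r)^{-(n-2)}$, and Lemma \ref{lem2.4}(i) applied to the radii $\rho(x)\leq r+R$ gives another \emph{lower} bound, $\Psi(x,r+R)\geq c\big((r+R)/\rho(x)\big)^{\delta}$ (the case $r+R<\rho(x)$ being trivial for the upper estimate). Two lower bounds on the same quantity cannot be combined into your displayed inequality $\big((r+R)/\rho(x)\big)^{\delta}\lesssim(1+R/r)^{n-2}$: for that you need an \emph{upper} bound on $\Psi(x,r+R)$ in terms of $\Psi(y,\rho(y))=1$, and neither plain ball inclusion nor Lemma \ref{lem2.4} supplies one, since Lemma \ref{lem2.4}(i) only says $\Psi(x,\cdot)$ grows at least like $r^{\delta}$ and gives no control on the mass of large balls. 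The missing ingredient is precisely the doubling of $V\,dy$, which you invoke only in part (ii): from $B(x,r+R)\subset B(y,2(r+R))$ and iterated doubling centred at $y$ one gets $\Psi(x,r+R)\leq C(1+R/r)^{k_{1}}$, where $k_{1}$ is governed by the doubling constant, not by $n-2$. Combining this upper bound with $c\big((r+R)/\rho(x)\big)^{\delta}\leq\Psi(x,r+R)$ yields $\big((r+R)/\rho(x)\big)^{\delta}\leq C(1+R/r)^{k_{1}}$, and then your algebra does produce $\rho(y)\leq C\rho(x)\big(1+|x-y|/\rho(x)\big)^{1-\delta/k_{1}}$, i.e.\ the stated upper bound with $k_{0}=k_{1}/\delta-1$; the lower bound follows, as you indicate, by exchanging $x$ and $y$ and rearranging, and the equivalence $\rho(y)\sim\rho(x)$ for $|x-y|<C\rho(x)$ is then immediate. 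So the plan is salvageable, but as written the key inequality of part (i) is underived, and the exponent $n-2$ in it must be replaced by the doubling exponent.
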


\begin{lemma}\label{lem2.6}{\rm (\cite[Corollary 2.8]{DZ2})}
For every nonnegative Schwarz function $\omega$, there exist $\delta>0$ and $C>0$ such that
\begin{equation}\notag
\int_{\mathbb{R}^{n}}t^{-n/2}\omega(|x-y|/\sqrt{t})V(y)dy \leq
\begin{cases}
Ct^{-1}(\sqrt{t}/\rho(x))^{\delta}, & t< \rho(x)^{2}; \\
Ct^{-1}(\sqrt{t}/\rho(x))^{l_{0}}, & t\geq{\rho(x)^{2}},
\end{cases}
\end{equation}
where $l_{0}$ is the constant given in Lemma \ref{lem2.5}.
\end{lemma}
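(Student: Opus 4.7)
The plan is to exploit the Schwartz decay $\omega(r)\le C_N(1+r)^{-N}$ (for any large $N$) together with a dyadic annular decomposition around $x$, and then control the resulting ball integrals of $V$ by invoking Lemmas \ref{lem2.4} and \ref{lem2.5}. Set $A_0=B(x,\sqrt{t})$ and $A_k=\{y:2^{k-1}\sqrt{t}\le|x-y|<2^{k}\sqrt{t}\}$ for $k\ge 1$. On $A_k$ the weight satisfies $\omega(|x-y|/\sqrt{t})\le C_N 2^{-kN}$, so the left-hand side is dominated by
\begin{equation*}
C_N\sum_{k\ge 0}2^{-kN}\,t^{-n/2}\int_{B(x,\,2^{k}\sqrt{t})}V(y)\,dy.
\end{equation*}
Everything then reduces to estimating $\int_{B(x,R)}V$ with $R=2^{k}\sqrt{t}$ and summing the geometric tail.

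For the regime $t\ge\rho(x)^{2}$, I would simply apply Lemma \ref{lem2.5}(ii) to get $R^{2-n}\int_{B(x,R)}V\le C(1+R/\rho(x))^{l_{0}}$. Since $2^{k}\sqrt{t}/\rho(x)\ge 1$ throughout this case, $(1+2^{k}\sqrt{t}/\rho(x))^{l_{0}}\le C\,2^{kl_{0}}(\sqrt{t}/\rho(x))^{l_{0}}$, and the $k$-th summand becomes $C_N\,2^{-k(N-n+2-l_{0})}\,t^{-1}(\sqrt{t}/\rho(x))^{l_{0}}$. Choosing $N>n-2+l_{0}$ converts this into a convergent geometric series and produces the stated bound.

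For the regime $t<\rho(x)^{2}$, I would split at $k_{0}:=\lfloor\log_{2}(\rho(x)/\sqrt{t})\rfloor$. For $k\le k_{0}$ we have $2^{k}\sqrt{t}\le\rho(x)$, so Lemma \ref{lem2.4}(i) with $R=\rho(x)$, combined with the normalization $\rho(x)^{2-n}\int_{B(x,\rho(x))}V=1$ from Lemma \ref{lem2.4}(ii), yields
\begin{equation*}
\int_{B(x,\,2^{k}\sqrt{t})}V\le C(2^{k}\sqrt{t})^{n-2}\bigl(2^{k}\sqrt{t}/\rho(x)\bigr)^{\delta_{0}},
\end{equation*}
whence the small-$k$ contribution is $\lesssim t^{-1}(\sqrt{t}/\rho(x))^{\delta_{0}}\sum_{k\le k_{0}}2^{-k(N-n+2-\delta_{0})}$, bounded once $N>n-2+\delta_{0}$. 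For $k>k_{0}$ the radius exceeds $\rho(x)$ and Lemma \ref{lem2.5}(ii) applies as before; here $\sqrt{t}/\rho(x)<1$, and factoring $(\sqrt{t}/\rho(x))^{l_{0}}=(\sqrt{t}/\rho(x))^{\delta}(\sqrt{t}/\rho(x))^{l_{0}-\delta}$ with $\delta:=\min(\delta_{0},l_{0})$ allows this tail to be absorbed into the same $(\sqrt{t}/\rho(x))^{\delta}$ factor.

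The principal difficulty I anticipate is clean bookkeeping at the transition scale $k\sim k_{0}$: one must ensure that the piece obtained from the $B_{q}$ self-improvement (exponent $\delta_{0}$) and the piece obtained from Shen's growth estimate (exponent $l_{0}$) both carry the correct sign in $\log(\sqrt{t}/\rho(x))$ so that the smaller of the two controls the sum. Once $N$ is chosen larger than $n-2+\max(\delta_{0},l_{0})$, each of the two geometric series converges, and the two pieces assemble into the claimed bound with exponent $\delta$ in the case $t<\rho(x)^{2}$ and $l_{0}$ in the case $t\ge\rho(x)^{2}$.
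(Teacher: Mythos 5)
The paper does not prove this lemma at all: it is quoted verbatim as \cite[Corollary 2.8]{DZ2}, so there is no in-paper argument to compare against. Your proof is correct and is essentially the standard argument behind the cited result: dyadic annuli $A_k$ around $x$, rapid decay of the Schwartz weight with an exponent $N$ chosen larger than $n-2+\max(\delta_0,l_0)$, the doubling/self-improvement estimate of Lemma \ref{lem2.4}(i) together with the normalization $\rho(x)^{2-n}\int_{B(x,\rho(x))}V=1$ for radii below $\rho(x)$, and Shen's growth bound of Lemma \ref{lem2.5}(ii) for radii above $\rho(x)$. The bookkeeping at the transition scale is handled correctly: for $t<\rho(x)^{2}$ the tail $k>k_0$ carries the factor $(\sqrt t/\rho(x))^{l_0}\le(\sqrt t/\rho(x))^{\delta}$ because $\sqrt t/\rho(x)<1$ and $\delta=\min(\delta_0,l_0)\le l_0$, so both pieces assemble into $Ct^{-1}(\sqrt t/\rho(x))^{\delta}$, and since the lemma only asserts the existence of some $\delta>0$ this choice is admissible (in fact the tail even gains an extra factor $2^{-k_0(N-n+2-l_0)}\sim(\sqrt t/\rho(x))^{N-n+2-l_0}$, which you do not need). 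The only cosmetic caveat is that the exponent in Lemma \ref{lem2.4}(i) is stated in the paper as an unspecified $\delta$; identifying it with $\delta_0=2-n/q$ is consistent with Shen's Lemma 1.2, and in any case your argument goes through verbatim with whatever positive exponent that lemma supplies.
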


It is well known that  the classical Hardy space $H^{1}(\mathbb{R}^{n})$ can be defined via  the maximal function $\sup_{t>0}|e^{-t(-\Delta)}f(x)|$ (cf. \cite{15}). In this sense, we can say that the Hardy
space $H^{1}(\mathbb{R}^{n})$ is the Hardy space associated with $-\Delta$. Since 1990s, the theory of Hardy spaces associated with operators  on $\mathbb R^{n}$ has been investigated extensively. In \cite{DZ3}, Dziuba\'{n}ski and  Zienkiewicz  introduced the Hardy space $H^{1}_{\mathcal{L}}(\mathbb R^{n})$ related to Schr\"odinger operators $\mathcal{L}$  and obtained the atomic characterization and the Riesz transform characterization of $H^{1}_{\mathcal{L}}(\mathbb R^{n})$ via local Hardy spaces. By the aid of Campanato type spaces, the spaces $H^{p}_{\mathcal{L}}(\mathbb{R}^{n})(0<p\le 1)$ were introduced by Dziuba\'{n}ski and  Zienkiewicz \cite{DZ2}.  In recent years, the results of \cite{DZ3,DZ2} have been extended to other second-ordered differential operators,  and various function spaces associated to operators have been established. For further information, we refer the reader to \cite{BDY,wyx,jia,yang} and the references therein.

For a Schr\"{o}dinger operator $\mathcal{L}$, let $\{e^{-t\mathcal{L}}\}_{t>0}$ be the heat semigroup generated by $\mathcal{L}$ and denote by $K^{\mathcal{L}}_{t}(\cdot,\cdot)$ the integral kernels of $e^{-t\mathcal{L}}$. Because the potential $V\geq 0$,  the Feynman-Kac formula implies that
$$0\leq K^{\mathcal L}_{t}(x,y)\leq K_{t}(x-y):=(4\pi t)^{-n/2}e^{-\frac{|x-y|^{2}}{4t}}.$$
 The Hardy type spaces $H^{p}_{\mathcal{L}}(\mathbb{R}^{n})$, $0<p\leq 1$, are defined as follows (cf. \cite{DZ2}):
\begin{definition}
For $0<p\leq 1$, the Hardy type space $H^{p}_{\mathcal{L}}(\mathbb{R}^{n})$ is defined as  the completion of the space of compactly supported $L^{1}(\mathbb R^{n})$)-functions such that the maximal function
$$M_{\mathcal{L}}(f)(x):=\sup_{t>0}|e^{-t\mathcal{L}}(f)(x)|$$
belongs to $L^{p}(\mathbb{R}^{n})$. The quasi-norm in $H^{p}_{\mathcal{L}}(\mathbb{R}^{n})$ is defined as
$\|f\|_{H^{p}_{\mathcal{L}}}:=\|M_{\mathcal{L}}(f)\|_{L^{p}}.$
\end{definition}
Let $f$ be a locally integrable function on $\mathbb{R}^{n}$ and $B=B(x,r)$ be a ball. Denote by $f_{B}$ the mean of $f$ on $B$, i.e., $f_{B}=|B|^{-1}\int_{B}f(y)dy$. Let
\begin{equation*}
  f(B,V):=\begin{cases}
  f_{B}, &    r<\rho(x);\\
  0, &    r\geq\rho(x),
  \end{cases}
\end{equation*}
where the auxiliary function $\rho(\cdot)$  is defined as
$$\rho(x):=\sup\Bigg\{r>0:\frac{1}{r^{n-2}}\int_{B(x,y)}V(y)dy\leq 1\Bigg\}.$$

\begin{definition}\label{def1.1}
Let $0<\gamma\leq 1$. The Campanato type space $BMO^{\gamma}_{\mathcal{L}}(\mathbb{R}^{n})$ is defined as the set of all  locally integrable functions $f$ satisfying
$$\|f\|_{BMO^{\gamma}_{\mathcal{L}}}:=\sup_{B\subset\mathbb{R}^{n}}\Bigg\{\frac{1}{|B|^{1+\gamma/n}}\int_{B}|f(x)-f(B,V)|dx\Bigg\}<\infty.$$
\end{definition}
The dual space of $H^{n/(n+\gamma)}_{\mathcal{L}}(\mathbb{R}^{n})$, $0\leq\gamma< 1$, is the Campanato type space $BMO^{\gamma}_{\mathcal{L}}(\mathbb{R}^{n})$ (cf. \cite[Theorem 4.5]{ma1}).

 \subsection{The $T1$ criterion on Campanato type spaces}We denote by $L^{p}_{c}(\mathbb{R}^{n})$ the set of functions $f\in L^{p}(\mathbb{R}^{n})$, $1\leq p\leq\infty$, whose support $\text{ supp }(f)$ is a compact subset of $\mathbb{R}^{n}$.

\begin{definition}
Let $0\leq \beta<n$, $1<p\leq q<\infty$ with $1/q=1/p-\beta/n$. Let $T$ be a bounded linear operator from $L^{p}(\mathbb{R}^{n})$ into $L^{q}(\mathbb{R}^{n})$ such that
$$Tf(x)=\int_{\mathbb{R}^{n}}K(x,y)f(y)dy,\ \ \ f\in L^{p}_{c}(\mathbb{R}^{n})\ and\ a.e.\ x\notin \text{supp}(f).$$
We shall say that $T$ is a $\beta$-Schr\"{o}dinger-Calder\'{o}n-Zygmund operator with regularity exponent $\delta>0$ if there exists a constant  $C>0$ such that
\begin{itemize}
\item[(i)] $\displaystyle |K(x,y)|\leq \frac{C}{|x-y|^{n-\beta}}\Big(1+\frac{|x-y|}{\rho(x)}\Big)^{-N}$ for all $N>0$ and $x\neq y$;
\item[(ii)] $\displaystyle |K(x,y)-K(x,z)|+|K(y,x)-K(z,x)|\leq C\frac{|y-z|^{\delta}}{|x-y|^{n-\beta+\delta}}$ when $|x-y|>2|y-z|$.
\end{itemize}
\end{definition}

The following $T1$ type criterions on Campanato type spaces were established by Ma et al. \cite{ma2}.
\begin{theorem}\label{the1}{\rm (\cite[Theorem1.1]{ma2})}
Let $T$ be a $\beta$-Schr\"{o}dinger-Calder\'{o}n-Zygmund operator, $\beta\geq 0$, $0< \beta+\gamma<\min\{1,\delta\}$, with smoothness exponent $\delta$. Then $T$ is a bounded operator from $BMO^{\gamma}_{\mathcal{L}}(\mathbb R^{n})$ into $BMO^{\gamma+\beta}_{\mathcal{L}}(\mathbb R^{n})$ if and only if there exists a constant
$C$ such that
$$\Big(\frac{\rho(x)}{r}\Big)^{\gamma}\frac{1}{|B|^{1+\beta/n}}\int_{B}|T1(y)-(T1)_{B}|dy\leq C$$
for every ball $B(x,r)$, $x\in\mathbb{R}^{n}$ and $0<r\leq \rho(x)/2$.
\end{theorem}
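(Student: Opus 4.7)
The plan is to establish the stated equivalence as a two-sided implication. For the necessity direction, I would test against the constant $f \equiv 1$. Although $1$ is not a nontrivial function in the classical BMO sense, in the Campanato-type space $BMO^{\gamma}_{\mathcal L}$ the quantity $1-1(B,V)$ vanishes on balls with $r<\rho(x)$ and equals $1$ on balls with $r\geq \rho(x)$, so $1\in BMO^{\gamma}_{\mathcal L}(\mathbb R^n)$. Assuming $T:BMO^{\gamma}_{\mathcal L}\to BMO^{\gamma+\beta}_{\mathcal L}$ is bounded, applying the defining inequality of the target space to $T1$ on a ball $B=B(x,r)$ with $r\leq \rho(x)/2$ immediately produces the testing bound, once one accounts for the extra factor $(\rho(x)/r)^{\gamma}$ that measures the local-versus-global scale.

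For the sufficiency direction, fix $f\in BMO^{\gamma}_{\mathcal L}(\mathbb R^n)$ and a ball $B=B(x_0,r)$, and aim to control
\[
I(B):=\frac{1}{|B|^{1+(\gamma+\beta)/n}}\int_{B}|Tf(y)-Tf(B,V)|\,dy
\]
uniformly. Split $f=(f-f_B)\chi_{2B}+(f-f_B)\chi_{(2B)^{c}}+f_B=:f_1+f_2+f_B$ and apply $T$ term by term. The local contribution $Tf_1$ is estimated by Hölder's inequality together with the $L^p\to L^q$ boundedness that follows from the $\beta$-Schrödinger-Calderón-Zygmund size estimate (i); here the factor $|B|^{\gamma/n}\|f\|_{BMO^{\gamma}_{\mathcal L}}$ controls $\|f_1\|_{L^p(2B)}$. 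The far contribution $Tf_2$ is handled by subtracting the value $Tf_2(x_1)$ at a reference point $x_1\in B$: the kernel regularity (ii) gives $|K(y,z)-K(x_1,z)|\leq C|y-x_1|^{\delta}|x_1-z|^{-(n-\beta+\delta)}$, and summing over dyadic annuli $2^{k+1}B\setminus 2^k B$ together with the Campanato growth $|B|^{-1}\int_{2^{k+1}B}|f-f_B|\leq C 2^{k(n+\gamma)}|B|^{\gamma/n}\|f\|_{BMO^{\gamma}_{\mathcal L}}$ yields a convergent geometric series, where convergence uses precisely $\gamma+\beta<\delta$.

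The constant term $T(f_B)=f_B\cdot T1$ is where the testing hypothesis enters. Its oscillation on $B$ is bounded, via the assumed inequality, by $C(r/\rho(x_0))^{\gamma}|B|^{\beta/n}$, while for $r\leq \rho(x_0)/2$ the average $|f_B|$ grows at most like $(\rho(x_0)/r)^{\gamma}\|f\|_{BMO^{\gamma}_{\mathcal L}}|B|^{\gamma/n}$ by a standard telescoping between $B$ and $B(x_0,\rho(x_0))$ on dyadic intermediate scales, using Lemma \ref{lem2.5}. These two factors combine to give exactly the $|B|^{1+(\gamma+\beta)/n}$ normalization required by $I(B)$. For the non-local regime $r\geq \rho(x_0)/2$ one has $f(B,V)=0$, so the mean of $|Tf|$ itself must be controlled; here the extra decay factor $(1+|x-y|/\rho(x))^{-N}$ in condition (i) replaces the cancellation that is normally provided by subtracting a mean, and iterating the local estimate at the critical scale $\rho(x_0)$ reduces this case back to the previous one.

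The main obstacle I expect is the bookkeeping in the non-local regime: one must combine the Schrödinger decay in (i), the Campanato growth of $f$ across dyadic annuli, and the behavior of $\rho$ under translation (Lemma \ref{lem2.5}(i)) to absorb the missing cancellation, which forces a careful choice of the exponent $N$ and a simultaneous partition of $\mathbb R^n$ adapted both to $B$ and to $B(x_0,\rho(x_0))$. The remaining steps — the local $L^p$ estimate and the dyadic far-field sum — are routine Calderón-Zygmund arguments once the splitting is in place.
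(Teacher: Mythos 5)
Note first that this paper does not prove Theorem \ref{the1} at all: it is quoted verbatim from \cite[Theorem 1.1]{ma2}, so your proposal must be measured against the argument there. Your sufficiency half is essentially that argument: the splitting $f=(f-f_B)\chi_{2B}+(f-f_B)\chi_{(2B)^c}+f_B$, the $L^p\to L^q$ bound for the local piece, the dyadic-annuli estimate from the smoothness hypothesis (ii) converging because $\gamma+\beta<\delta$, the use of the testing condition only on the constant piece $f_B\,T1$ together with the telescoping bound $|f_B|\lesssim \rho(x_0)^{\gamma}\|f\|_{BMO^{\gamma}_{\mathcal L}}$, and the replacement of cancellation by the decay $(1+|x-y|/\rho(x))^{-N}$ on balls with $r\gtrsim \rho(x_0)$ are exactly the ingredients used in \cite{ma2} (and reused in Section \ref{sec-4} of this paper). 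At sketch level that half is sound, modulo the routine issue of how $Tf_2$ and $T1$ are defined.

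The necessity half, however, contains a genuine gap. The claim that $1\in BMO^{\gamma}_{\mathcal L}(\mathbb R^n)$ is false in general for $\gamma>0$: for a ball $B=B(x,r)$ with $r\ge \rho(x)$ one has $1(B,V)=0$, so $\frac{1}{|B|^{1+\gamma/n}}\int_B|1-1(B,V)|\,dy=|B|^{-\gamma/n}\sim r^{-\gamma}$, and since $\inf_x\rho(x)=0$ for typical admissible potentials (e.g. $V(x)=|x|^2$, for which $\rho(x)\sim(1+|x|)^{-1}$) this is unbounded; equivalently, membership requires the size bound $|f(x)|\le C\rho(x)^{\gamma}$, which $f\equiv 1$ violates. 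Moreover, even if $1$ did belong to the space with norm $M$, boundedness of $T$ would only yield $\frac{1}{|B|^{1+\beta/n}}\int_B|T1-(T1)_B|\le CM\,r^{\gamma}$, so after multiplying by $(\rho(x)/r)^{\gamma}$ you are left with $CM\rho(x)^{\gamma}$, which is not uniform because $\rho$ may also be unbounded (e.g. $V(x)=|x|^{-1}$). Your phrase ``once one accounts for the extra factor'' conceals exactly the missing idea: one must localize, writing $1=\varphi+(1-\varphi)$ with $\varphi$ a Lipschitz cutoff equal to $1$ on $B(x_0,2\rho(x_0))$ and supported in $B(x_0,4\rho(x_0))$. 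Then $\|\varphi\|_{BMO^{\gamma}_{\mathcal L}}\lesssim \rho(x_0)^{-\gamma}$ (using Lemma \ref{lem2.5} to compare $\rho$ on the support), and applying the assumed boundedness to $\varphi$ produces precisely the factor $(\rho(x_0)/r)^{-\gamma}$ on the right-hand side; the tail is handled by the smoothness estimate (ii), which gives an oscillation bound of order $(r/\rho(x_0))^{\delta-\beta}$ for $T(1-\varphi)$ on $B$, and one again needs $\beta+\gamma<\delta$ to absorb the weight $(\rho(x_0)/r)^{\gamma}$. Without this localization the necessity direction does not go through as you wrote it.
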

When $\gamma=0$, the authors in \cite{ma2} also proved
\begin{theorem}\label{the2}{\rm (\cite[Theorem1.2]{ma2})}
Let $T$ be a $\beta$-Schr\"{o}dinger-Calder\'{o}n-Zygmund operator, $0\leq \beta<\min\{1,\delta\}$, with smoothness exponent $\delta$. Then $T$ is a bounded operator from $BMO_{\mathcal{L}}(\mathbb R^{n})$ into $BMO^{\beta}_{\mathcal{L}}(\mathbb R^{n})$ if and only if there exists a constant
$C$ such that
$$\log\Big(\frac{\rho(x)}{r}\Big)\frac{1}{|B|^{1+\beta/n}}\int_{B}|T1(y)-(T1)_{B}|dy\leq C$$
for every ball $B(x,r)$, $x\in\mathbb{R}^{n}$ and $0<r\leq \rho(x)/2$.
\end{theorem}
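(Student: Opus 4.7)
The plan is to adapt the classical $T1$ theorem proof to the Schr\"odinger setting, with the logarithmic weight in the hypothesis playing the precise role of controlling the growth of means of $BMO_{\mathcal L}$-functions at subcritical scales.

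For sufficiency, fix a ball $B=B(x_{0},r)$ with $0<r\le\rho(x_{0})/2$ and a generic $f\in BMO_{\mathcal L}(\mathbb R^{n})$, and split
\[
f=(f-f_{2B})\chi_{2B}+(f-f_{2B})\chi_{(2B)^{c}}+f_{2B}=:f_{1}+f_{2}+f_{2B}.
\]
Since $T$ is bounded from $L^{p}$ into $L^{q}$ with $1/q=1/p-\beta/n$, combining with the John--Nirenberg inequality for $BMO_{\mathcal L}$ (which gives $\|f_{1}\|_{L^{p}(2B)}\lesssim|B|^{1/p}\|f\|_{BMO_{\mathcal L}}$) and H\"older's inequality yields the desired bound $|B|^{-1-\beta/n}\int_{B}|Tf_{1}|\,dy\lesssim\|f\|_{BMO_{\mathcal L}}$. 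The far term $f_{2}$ is handled by invoking the H\"older regularity condition~(ii): for $y\in B$, $|Tf_{2}(y)-Tf_{2}(x_{0})|$ decomposes as a sum over dyadic annuli $2^{j+1}B\setminus 2^{j}B$, and each annulus contributes on the order of $j\cdot 2^{-j(\delta-\beta)}\|f\|_{BMO_{\mathcal L}}\cdot r^{\beta}$ after combining the telescoping oscillation $|f_{2^{j+1}B}-f_{2B}|\lesssim j\|f\|_{BMO_{\mathcal L}}$ with the kernel decay; the series converges because the statement's hypothesis $\beta<\delta$ is built in.

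The constant piece is where the logarithmic weight activates. Since $T(f_{2B})=f_{2B}\cdot T1$,
\[
\frac{1}{|B|^{1+\beta/n}}\int_{B}|T(f_{2B})-(T(f_{2B}))_{B}|\,dy=|f_{2B}|\cdot\frac{1}{|B|^{1+\beta/n}}\int_{B}|T1-(T1)_{B}|\,dy.
\]
The hypothesis supplies $|B|^{-1-\beta/n}\int_{B}|T1-(T1)_{B}|\,dy\le C\big(\log(\rho(x_{0})/r)\big)^{-1}$, while a dyadic comparison of averages across the scales $r,2r,\ldots,\rho(x_{0})$, together with the defining convention $f(2B,V)=f_{2B}$ (valid since $2r\le\rho(x_{0})$), yields $|f_{2B}|\lesssim\log(\rho(x_{0})/r)\,\|f\|_{BMO_{\mathcal L}}$; the two logarithmic factors cancel. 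The case $r>\rho(x_{0})/2$ is then reduced to the definition of $BMO^{\beta}_{\mathcal L}$ in which the mean is replaced by zero, and is handled via the size bound (i) in its decay form $(1+|x-y|/\rho(x))^{-N}$. For necessity, the plan is to test boundedness on a suitable bump $\phi_{x_{0},r}$ that equals $1$ on a controlled $\rho$-neighborhood of $x_{0}$ and has $BMO_{\mathcal L}$-norm $\lesssim 1$, so that the oscillation of $T\phi_{x_{0},r}$ on $B(x_{0},r)$ reproduces that of $T1$ up to a lower-order error, and the log factor emerges from the mean-value comparison between $\phi_{x_{0},r}$ and $1$ at subcritical scales.

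The main obstacle is the constant-piece estimate: making the iterated dyadic oscillation argument precise enough that $|f_{2B}|\lesssim\log(\rho(x_{0})/r)\|f\|_{BMO_{\mathcal L}}$ holds with the correct constants, and respecting the jump in the definition of $BMO_{\mathcal L}$ at the critical scale $\rho$. This is precisely where the regime $\gamma=0$ (logarithmic weight) differs structurally from the regime $\gamma>0$ (polynomial weight of Theorem~\ref{the1}), and explains why the present statement cannot simply be recovered from Theorem~\ref{the1} by letting $\gamma\to 0$.
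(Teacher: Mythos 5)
First, a point of comparison: the paper does not prove Theorem \ref{the2} at all --- it is quoted from \cite{ma2} (only the sufficiency half is ever used later), so the natural benchmark is the argument of \cite{ma2}. Your sufficiency sketch is essentially that argument and is sound: the splitting $f=f_{1}+f_{2}+f_{2B}$, the $L^{p}\to L^{q}$ bound plus John--Nirenberg for the local piece (John--Nirenberg must be invoked in the form valid for $BMO_{\mathcal L}$ including critical balls, cf.\ \cite{DGMTZ}), the annuli/telescoping estimate for the far piece with convergence from $\beta<\delta$, and the cancellation between $|f_{2B}|\lesssim(1+\log(\rho(x_{0})/r))\|f\|_{BMO_{\mathcal L}}$ and the log-weighted hypothesis on $T1$ are exactly where the $\gamma=0$ regime differs from Theorem \ref{the1}. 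Two items you gloss over but should state: $Tf$ and $T1$ are only well defined pointwise thanks to the extra decay $(1+|x-y|/\rho(x))^{-N}$ in condition (i), and the supercritical case $r>\rho(x_{0})/2$ needs the comparison of $\rho(y)$, $y\in B$, with $r$ via Lemma \ref{lem2.5} before the far-field integral converges; both are routine but not free.

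The genuine gap is in your necessity direction. A bump $\phi_{x_{0},r}$ equal to $1$ on a $\rho$-neighborhood of $x_{0}$ with $\|\phi_{x_{0},r}\|_{BMO_{\mathcal L}}\lesssim1$ can only yield the \emph{unweighted} condition: boundedness gives $|B|^{-1-\beta/n}\int_{B}|T\phi_{x_{0},r}-(T\phi_{x_{0},r})_{B}|\,dy\le C$, and the correction $T(1-\phi_{x_{0},r})$ has oscillation over $B$ of order $(r/\rho(x_{0}))^{\delta-\beta}r^{\beta}$ by condition (ii), so no factor $\log(\rho(x_{0})/r)$ can emerge from a ``mean-value comparison''; indeed the constant function $1$ itself lies in $BMO_{\mathcal L}$ with norm $1$, so this route proves strictly less than the stated condition. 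To force the logarithmic weight one must test on a function of height $\simeq\log(\rho(x_{0})/r)$ with $BMO_{\mathcal L}$-norm $O(1)$, for instance $f_{x_{0},r}(y)=\max\bigl\{0,\ \min\bigl\{\log\bigl(\rho(x_{0})/r\bigr),\ \log\bigl(\rho(x_{0})/|y-x_{0}|\bigr)\bigr\}\bigr\}$: it equals $\log(\rho(x_{0})/r)$ on $2B$, vanishes outside the critical ball, and its logarithmic tail is absorbed by the H\"older estimate (ii) when comparing $Tf_{x_{0},r}$ with $\log(\rho(x_{0})/r)\,T1$ on $B$; this is how the weighted condition is actually obtained in \cite{ma2} (in the $\gamma>0$ case of Theorem \ref{the1} the analogous test function has height $(\rho(x_{0})/r)^{\gamma}$). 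As written, your necessity argument would not close.
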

\begin{lemma}\label{lem4.1}{\rm (\cite[Remark 4.1]{ma2})}
Theorems \ref{the1}\  and  \ \ref{the2} can be also  stated in a vector-valued setting. If $Tf$ takes values in a Banach space $\mathbb{B}$ and the absolute values in the conditions are replaced by the norm in $\mathbb{B}$, then both results hold.
\end{lemma}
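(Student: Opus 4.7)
The plan is to prove the vector-valued Lemma \ref{lem4.1} by adapting the scalar proofs of Theorems \ref{the1} and \ref{the2} from \cite{ma2}, substituting the Banach-space norm $\|\cdot\|_{\mathbb{B}}$ for the absolute value $|\cdot|$ throughout and Bochner integrals for Lebesgue integrals. Equivalently, one can run a Hahn--Banach duality argument to reduce everything to the scalar theorems; both routes yield the same conclusion.

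For the direct route, I would inspect each step of the scalar proofs and verify that it survives the substitution. The scalar arguments rely on three ingredients: (i) the pointwise kernel bounds on $|K(x,y)|$ and its differences, which immediately become bounds on $\|K(x,y)\|_{\mathbb{B}}$ with identical constants; (ii) Minkowski's integral inequality
\begin{equation*}
\Big\|\int K(x,y)f(y)\,dy\Big\|_{\mathbb{B}}\leq\int\|K(x,y)\|_{\mathbb{B}}|f(y)|\,dy,
\end{equation*}
which is valid for Bochner integrals in any Banach space; and (iii) the Campanato oscillation $\int_{B}|f(y)-f(B,V)|\,dy$, which becomes $\int_{B}\|f(y)-f(B,V)\|_{\mathbb{B}}\,dy$ with $f(B,V)$ now interpreted as a Bochner average. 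The decompositions used in \cite{ma2} (near/far splitting of the integrand, use of H\"older regularity to control oscillation, reduction to the $T1$ hypothesis) then proceed verbatim, since no step exploits cancellation specific to scalars.

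Alternatively, for each $e^{*}\in\mathbb{B}^{*}$ with $\|e^{*}\|_{\mathbb{B}^{*}}\leq 1$ the scalar operator $T_{e^{*}}f:=\langle Tf,e^{*}\rangle$ is a scalar $\beta$-Schr\"odinger-Calder\'on-Zygmund operator whose kernel $\langle K(x,y),e^{*}\rangle$ inherits the size and regularity bounds of $K$ with the same constants (because $|\langle v,e^{*}\rangle|\leq\|v\|_{\mathbb{B}}$), and whose $T1$ oscillation is dominated by that of $T$. Applying scalar Theorems \ref{the1} and \ref{the2} to $T_{e^{*}}$ yields bounds that are uniform in $e^{*}$, and these combine via the Hahn--Banach identity $\|v\|_{\mathbb{B}}=\sup_{\|e^{*}\|\leq 1}|\langle v,e^{*}\rangle|$ together with the analogous duality representation of Bochner integrals to recover the $\mathbb{B}$-valued Campanato estimate for $Tf$.

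The main obstacle is technical rather than analytic: one must ensure that $Tf$ is strongly $\mathbb{B}$-measurable so that the Bochner integrals defining $(Tf)_{B}$ and $\int_{B}\|Tf-(Tf)_{B}\|_{\mathbb{B}}\,dy$ are well defined, and that in the duality approach the interchange of the supremum over $e^{*}$ with the integral over $B$ is legitimate. In the applications of interest here, where $\mathbb{B}=L^{\infty}((0,\infty),dt)$ or $\mathbb{B}=L^{2}((0,\infty),dt/t)$ and $Tf(x)$ is a continuous $\mathbb{B}$-valued function of $x$ coming from the kernels in \eqref{eq-1.3}, strong measurability follows directly from the joint continuity of the kernels in $(t,x)$; in the abstract case one reduces to a separable closed subspace of $\mathbb{B}$ and invokes Pettis' theorem. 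Once these measurability issues are handled, the $\mathbb{B}$-valued versions of Theorems \ref{the1} and \ref{the2} follow immediately from the scalar results.
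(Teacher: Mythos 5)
The paper offers no proof of this lemma at all: it is quoted verbatim from \cite{ma2} (Remark 4.1 there), and the intended justification is exactly your first route. The scalar proofs of Theorems \ref{the1} and \ref{the2} use only kernel size and smoothness bounds, Minkowski's inequality, and the $T1$ oscillation hypothesis, with no cancellation specific to scalars, so they survive the replacement of $|\cdot|$ by $\|\cdot\|_{\mathbb{B}}$ and of Lebesgue by Bochner integrals; your measurability remarks are the right caveat and are indeed harmless for the concrete choices $\mathbb{B}=L^{\infty}((0,\infty),dt)$ and $\mathbb{B}=L^{2}((0,\infty),dt/t)$ used in Section \ref{sec-4}. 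In that respect your proposal is correct and matches the intended argument.

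Your claim that the Hahn--Banach duality route ``yields the same conclusion,'' however, is not right, and the obstruction is not the measurability/interchange technicality you describe. Applying the scalar theorems to $T_{e^{*}}$ for each fixed unit functional $e^{*}$ only controls $\sup_{\|e^{*}\|\leq 1}\int_{B}|\langle Tf(y)-(Tf)_{B},e^{*}\rangle|\,dy$, whereas the $\mathbb{B}$-valued Campanato norm requires $\int_{B}\sup_{\|e^{*}\|\leq 1}|\langle Tf(y)-(Tf)_{B},e^{*}\rangle|\,dy$; the inequality between these goes the wrong way, and the gap can be arbitrarily large. For instance, if $g:[0,1]\to\mathbb{R}^{N}$ runs through an orthonormal basis on intervals of length $1/N$, then $\int_{0}^{1}\|g(y)\|\,dy=1$ while $\sup_{\|e^{*}\|\leq1}\int_{0}^{1}|\langle g(y),e^{*}\rangle|\,dy\leq N^{-1/2}$. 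Repairing this by letting the functional depend on the point, $e^{*}=h(x)$, takes you outside the scope of the scalar theorems, since the kernel $\langle K(x,y),h(x)\rangle$ no longer inherits the required regularity in the first variable (condition (ii) and the $T1$ reduction both use smoothness in $x$). So the duality reduction should be dropped; the lemma is proved by the direct rerun of the scalar proofs, i.e.\ your steps (i)--(iii).
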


\section{Regularity estimates}\label{sec-3}
\subsection{Regularities of heat kernels}\label{sec-2.1} By the fundamental solution of Schr\"{o}dinger operators,  Dziuba\'{n}ski and  Zienkiewicz    proved    that the heat kernel $K^{\mathcal{L}}_{t}(\cdot,\cdot)$  satisfies the following estimates.
\begin{lemma}\label{pro2.2}
\item{\rm (i)} {\rm (\cite[Theorem 2.11]{DZ3})} For any $N>0$, there exist constants $C_{N},c>0$ such that
$$|K^{\mathcal{L}}_{t}(x,y)|\leq C_{N}t^{-n/2}e^{-c|x-y|^{2}/t}\Big(1+\frac{\sqrt{t}}{\rho(x)}+\frac{\sqrt{t}}{\rho(y)}\Big)^{-N}.$$
\item{\rm (ii)} {\rm (\cite[Theorem \ 4.11]{DZ1})} Assume that $0<\delta\leq \min\{1,\delta_{0}\}.$ For any $N>0$, there exist constants $C_{N},c>0$ such that for all $|h|<\sqrt{t}$,
$$\Big|K^{\mathcal{L}}_{t}(x+h,y)-K^{\mathcal{L}}_{t}(x,y)\Big|\leq C_{N}\Big(\frac{|h|}{\sqrt{t}}\Big)^{\delta}t^{-n/2}e^{-c|x-y|^{2}/t}\Big(1+\frac{\sqrt{t}}{\rho(x)}+
\frac{\sqrt{t}}{\rho(y)}\Big)^{-N}.$$
\end{lemma}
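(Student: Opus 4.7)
The plan is to prove both estimates by combining the Feynman-Kac inequality $0\leq K^{\mathcal{L}}_{t}(x,y)\leq K_{t}(x-y)$ with the Duhamel perturbation identity
$$K^{\mathcal{L}}_{t}(x,y)=K_{t}(x-y)-\int_{0}^{t}\!\!\int_{\mathbb{R}^{n}}K^{\mathcal{L}}_{t-s}(x,z)V(z)K_{s}(z-y)\,dz\,ds,$$
and then extracting the $\rho$-decay by iterated use of the key size estimate of Lemma \ref{lem2.6} for the convolution of $V$ against a Gaussian, together with the scaling properties of $\rho$ furnished by Lemmas \ref{lem2.4} and \ref{lem2.5}.

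For part (i), the Gaussian factor $t^{-n/2}e^{-c|x-y|^{2}/t}$ is already delivered by Feynman-Kac, so only the polynomial decay $(1+\sqrt{t}/\rho(x)+\sqrt{t}/\rho(y))^{-N}$ must be produced. When $\sqrt{t}\leq\min\{\rho(x),\rho(y)\}$ this factor is comparable to $1$ and nothing is required. When $\sqrt{t}$ is large compared to $\rho(x)$ (the other case is symmetric), I would iterate the Duhamel identity $N$ times. At each step I split the $s$-integral into $[0,t/2]$ and $[t/2,t]$, apply the crude bound $K^{\mathcal{L}}_{r}\leq K_{r}$ on the first piece, and use Lemma \ref{lem2.6} to gain a factor $(\sqrt{t}/\rho(x))^{-\delta}$ (or $(\sqrt{t}/\rho(x))^{-l_{0}}$ in the large-time regime) from each occurrence of $\int K_{r}(x-z)V(z)\,dz$. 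The slow variation of $\rho$ (Lemma \ref{lem2.5}(i)) lets these factors survive along the iteration, and after sufficiently many steps their product exceeds $(\sqrt{t}/\rho(x))^{N}$, as needed.

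For part (ii), I would apply the same Duhamel decomposition to the increment:
$$K^{\mathcal{L}}_{t}(x+h,y)-K^{\mathcal{L}}_{t}(x,y)=\bigl[K_{t}(x+h-y)-K_{t}(x-y)\bigr]-\int_{0}^{t}\!\!\int_{\mathbb{R}^{n}}\bigl[K^{\mathcal{L}}_{t-s}(x+h,z)-K^{\mathcal{L}}_{t-s}(x,z)\bigr]V(z)K_{s}(z-y)\,dz\,ds.$$
The first bracket is handled by the classical mean-value estimate for the Gaussian $K_{t}$, which yields $(|h|/\sqrt{t})\leq(|h|/\sqrt{t})^{\delta}$ times the required Gaussian factor under $|h|<\sqrt{t}$. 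For the integral term, on $s\in[t/2,t]$ I transfer the regularity to $K_{s}(z-y)$ (using the classical Gaussian Hölder estimate) and combine with the size estimate of part (i); on $s\in[0,t/2]$ the iterated/bootstrap argument of (i) is rerun with the increment on the left-hand side, gaining one factor of $(|h|/\sqrt{t})^{\delta}$ per iteration and summing to the prescribed decay in $\rho$.

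The main obstacle is twofold. First, propagating the polynomial $\rho$-decay through the perturbation iteration uniformly in $N$ requires careful bookkeeping because two distinct regimes (with exponents $\delta$ and $l_{0}$) appear in Lemma \ref{lem2.6}, and one must ensure that the Gaussian factor is not deteriorated after successive convolutions (a standard trick is to trade $e^{-c|x-y|^{2}/t}$ for a slightly smaller constant $c$ at each step while retaining the same functional form). Second, in part (ii) the Hölder exponent is limited to $\delta\leq\delta_{0}=2-n/q$: this is precisely the regularity that the single-step perturbation kernel $\int K_{t-s}(x-z)V(z)K_{s}(z-y)\,dz$ can deliver, since its smoothness is controlled by the reverse Hölder integrability of $V$. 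Closing the bootstrap therefore requires choosing $\delta$ below this threshold and balancing it against the $N$-dependent constants produced by the iteration.
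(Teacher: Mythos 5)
This lemma is not proved in the paper at all: it is quoted from Dziuba\'nski--Zienkiewicz (\cite[Theorem 2.11]{DZ3} and \cite[Theorem 4.11]{DZ1}), so the only question is whether your argument would actually establish it, and it would not. The central flaw is in how you use Lemma \ref{lem2.6}. That lemma bounds $\int t^{-n/2}\omega(|x-y|/\sqrt{t})V(y)\,dy$ by $Ct^{-1}(\sqrt{t}/\rho(x))^{\delta}$ for $t<\rho(x)^{2}$ and by $Ct^{-1}(\sqrt{t}/\rho(x))^{l_{0}}$ for $t\geq\rho(x)^{2}$ --- \emph{positive} powers of $\sqrt{t}/\rho(x)$, not the negative powers $(\sqrt{t}/\rho(x))^{-\delta}$, $(\sqrt{t}/\rho(x))^{-l_{0}}$ you claim to ``gain''. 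In the regime $\sqrt{t}\gg\rho(x)$, which is exactly where the factor $(1+\sqrt{t}/\rho(x))^{-N}$ has content, these bounds are large, not small. More structurally, the Duhamel identity writes $K^{\mathcal{L}}_{t}(x,y)=K_{t}(x-y)-(\text{nonnegative perturbation term})$, so upper bounds on the perturbation term (which is all that Lemma \ref{lem2.6} and the crude bound $K^{\mathcal{L}}_{r}\leq K_{r}$ can give) only show that $K^{\mathcal{L}}_{t}$ is \emph{close} to the free Gaussian --- this is precisely Lemma \ref{lem1}, whose factor $(\sqrt{t}/\rho(x))^{\delta_{0}}$ is useful only when $\sqrt{t}\lesssim\rho(x)$. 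No amount of iterating such upper bounds can make $K^{\mathcal{L}}_{t}$ itself small when $\sqrt{t}\gg\rho(x)$; the iterated series is alternating and the smallness comes from cancellation you never quantify.

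The actual mechanism behind part (i) (as in \cite{DZ3} and Kurata \cite{Kurata}) is loss of mass due to killing by $V$: one proves a \emph{lower} bound on $\int_{0}^{t}\int K^{\mathcal{L}}_{s}(x,z)V(z)\,dz\,ds$ at the critical time scale $t\sim\rho(x)^{2}$, deducing $\int K^{\mathcal{L}}_{\rho(x)^{2}}(x,y)\,dy\leq 1-\epsilon$, and then bootstraps with the semigroup (Chapman--Kolmogorov) property and the slow variation of $\rho$ (Lemma \ref{lem2.5}) to get decay of arbitrary order $N$, finally interpolating with the Feynman--Kac Gaussian bound. Your proof of part (ii) inherits the same gap, since it relies on (i) for the decay factor, and in addition the phrase ``gaining one factor of $(|h|/\sqrt{t})^{\delta}$ per iteration'' is not meaningful --- only a single H\"older factor is wanted, and the standard route is to combine the free-kernel H\"older estimate through the perturbation formula (giving the increment bound without $\rho$-decay) with the already-established size estimate of (i) via the semigroup splitting $K^{\mathcal{L}}_{t}=K^{\mathcal{L}}_{t/2}\circ K^{\mathcal{L}}_{t/2}$. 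As written, your argument cannot produce the $(1+\sqrt{t}/\rho(x)+\sqrt{t}/\rho(y))^{-N}$ factor in either part.
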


\begin{lemma}\label{lem1}{\rm (\cite[Proposition 2.16]{DZ2})}
 There exist constants $C,c>0$ such that for $x,y\in\mathbb{R}^{n}$ and  $t>0,$
$$\Big|K^{\mathcal{L}}_{t}(x,y)-K_{t}(x-y)\Big|\leq C \Big(\frac{\sqrt{t}}{\rho(x)}\Big)^{\delta_{0}}t^{-n/2}e^{-c|x-y|^{2}/t}.$$
\end{lemma}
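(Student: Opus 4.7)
The plan is to exploit the Duhamel-type perturbation formula relating the free heat kernel and the Schr\"odinger heat kernel. Since $\mathcal{L}=-\Delta+V$ and $V\ge 0$, one has the identity
\begin{equation*}
K_{t}(x-y)-K^{\mathcal{L}}_{t}(x,y)=\int_{0}^{t}\!\!\int_{\mathbb{R}^{n}}K_{t-s}(x-z)\,V(z)\,K^{\mathcal{L}}_{s}(z,y)\,dz\,ds,
\end{equation*}
which follows from differentiating $s\mapsto e^{-(t-s)(-\Delta)}e^{-s\mathcal{L}}$ and integrating from $0$ to $t$. I would base the whole argument on this identity, since it reduces the problem to a weighted integral of $V$ against two Gaussian-type kernels.

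The second step is to separate the long-range Gaussian decay in $|x-y|$ from the short-range behavior. Writing $K_{t-s}(x-z)\lesssim (t-s)^{-n/2}e^{-c|x-z|^{2}/(t-s)}$ and using Lemma \ref{pro2.2}(i) with $N=0$ on $K^{\mathcal{L}}_{s}(z,y)$, I would invoke the elementary semigroup identity
\begin{equation*}
\frac{|x-z|^{2}}{t-s}+\frac{|z-y|^{2}}{s}\;\ge\;\frac{|x-y|^{2}}{t},
\end{equation*}
which lets me split each Gaussian factor into a piece carrying the global decay $e^{-c|x-y|^{2}/t}$ and a piece whose $z$-integral is a standard Gaussian convolution. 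A symmetric choice $s\in(0,t/2)$ versus $s\in(t/2,t)$ lets me absorb the factor $(t/((t-s)s))^{n/2}$ in a way that preserves the prefactor $t^{-n/2}e^{-c|x-y|^2/t}$.

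Once that factor is pulled out, what remains is an integral of the form
\begin{equation*}
\int_{0}^{t}\!\!\int_{\mathbb{R}^{n}}u_{s}^{-n/2}\,e^{-c'|x-z|^{2}/u_{s}}\,V(z)\,dz\,ds,
\end{equation*}
with $u_{s}\sim s$ (or $t-s$) on each half of $(0,t)$. Here Lemma \ref{lem2.6}, applied with a suitable Schwartz function $\omega$, bounds the inner integral by $u_{s}^{-1}(\sqrt{u_{s}}/\rho(x))^{\delta}$ for $u_{s}<\rho(x)^{2}$, and this is precisely what produces the factor $(\sqrt{t}/\rho(x))^{\delta_{0}}$ after integrating in $s$; the exponent $\delta_{0}=2-n/q$ arises from the reverse H\"older exponent via Lemma \ref{lem2.4}(i). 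For the regime $u_s\ge \rho(x)^2$, I would use the exponent $l_0$ branch and check that the estimate is dominated by the $\delta_0$-term up to rescaling.

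The main obstacle I expect is controlling the interaction between the two kernels at $z$ when $s$ is near $0$ or near $t$: the singularity of either $K_{t-s}$ or $K^{\mathcal{L}}_{s}$ at $z=x$ or $z=y$ must be tamed without losing the $\delta_{0}$ power. This is handled by the symmetric splitting at $s=t/2$ and by using Lemma \ref{lem2.5}(i) to transfer $\rho(y)$ and $\rho(x)$ whenever the Gaussian forces $z$ to sit close to either point; once both halves are estimated in the same way, summing the two pieces yields the claimed inequality.
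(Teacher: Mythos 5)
Your plan is essentially the argument the paper relies on: Lemma \ref{lem1} is quoted from [DZ2, Proposition 2.16], and the paper's own proof of the analogous gradient estimate (Lemma \ref{lem1-1}) proceeds exactly as you propose, via the perturbation identity (\ref{p-2}), a split of the time integral at $t/2$, Lemma \ref{lem2.6} for the $z$-integral of $V$ against a Gaussian, and Lemma \ref{lem2.5}(i) to pass between $\rho(x)$ and $\rho(y)$ at the cost of a factor $e^{\varepsilon|x-y|^{2}/t}$ absorbed by the Gaussian. Two small points of precision. First, the two halves of the Duhamel integral are not symmetric in the way your sketch suggests: the piece $\int_{0}^{t/2}\int K_{t-s}(x-z)V(z)K^{\mathcal{L}}_{s}(z,y)\,dz\,ds$ has its short-time Gaussian centered at $y$, so Lemma \ref{lem2.6} naturally produces $(\sqrt{s}/\rho(y))^{\delta_{0}}$, not $(\sqrt{s}/\rho(x))^{\delta_{0}}$; only the other piece gives $\rho(x)$ directly, and the conversion via Lemma \ref{lem2.5}(i) (valid since one may assume $t<\rho(x)^{2}$) is genuinely needed, not optional --- you gesture at this, but it should be stated as the mechanism for that specific half. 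Second, your treatment of the regime $u_{s}\geq\rho(x)^{2}$ is wrong as phrased: the $l_{0}$-branch of Lemma \ref{lem2.6} is \emph{not} dominated by the $\delta_{0}$-term there (since $l_{0}>\delta_{0}$ and $\sqrt{u_{s}}/\rho(x)\geq 1$, and integrating it in $s$ yields $(\sqrt{t}/\rho(x))^{l_{0}}$). The standard fix, used in the paper's proof of Lemma \ref{lem1-1}, is to dispose of the case $t\geq\rho(x)^{2}$ trivially at the outset, where $(\sqrt{t}/\rho(x))^{\delta_{0}}\geq 1$ and the claim follows from the Gaussian bounds on $K_{t}$ and $K^{\mathcal{L}}_{t}$ alone (Lemma \ref{pro2.2}(i)); then in the remaining case $t<\rho(x)^{2}$ one has $u_{s}\leq t<\rho(x)^{2}$, so only the $\delta$-branch of Lemma \ref{lem2.6} ever occurs. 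With these adjustments your argument is correct and coincides with the paper's approach.
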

In \cite{duo}, under the assumption that $V\in B_{q}, q>n$, X. Duong, L. Yan and C. Zhang obtained the following regularity estimate for the kernel $K^{\mathcal{L}}_{t}(\cdot,\cdot)$.
\begin{lemma}\label{le3.8}{\rm (\cite[Lemma 3.8]{duo})}
  Suppose that $V\in B_{q}$ for some $q>n$. For any $N>0$, there exist constants $C >0$ and $c>0$ such that
  for all $x,y\in\mathbb R^n$ and $t>0,$
\begin{eqnarray}\label{e3.11}
 | \nabla_x { K}^{\mathcal{L}}_t(x,y)|
 \leq C t^{-(n+1)/2}e^{-c{{|x-y|}^2}/{ t}}\left(1+\frac{\sqrt{t}}{\rho(x)}
+\frac{\sqrt{t}}{\rho(y)}\right)^{-N}.
\end{eqnarray}
\end{lemma}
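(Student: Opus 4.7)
The plan is to proceed through Duhamel's (perturbation) formula combined with careful Gaussian-convolution estimates and the reverse H\"older condition with $q>n$. Starting from
$$K^{\mathcal{L}}_t(x,y) = K_t(x-y) - \int_0^t\!\!\int_{\mathbb R^n} K_{t-s}(x-z)\,V(z)\,K^{\mathcal{L}}_s(z,y)\,dz\,ds,$$
I would differentiate in $x$, so that the first term contributes the classical Gaussian gradient bound $|\nabla_x K_t(x-y)|\leq C t^{-(n+1)/2} e^{-c|x-y|^2/t}$, and the problem reduces to estimating the remaining integral with $\nabla_x K_{t-s}$ inside it.

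The next step is to split the $s$-integration at $t/2$. On $0<s\leq t/2$ we have $t-s\sim t$, so the Gaussian gradient estimate $|\nabla_x K_{t-s}(x-z)|\leq C t^{-(n+1)/2}e^{-c|x-z|^2/t}$ applies, and combining with the pointwise bound on $K^{\mathcal{L}}_s(z,y)$ from Lemma \ref{pro2.2}(i) reduces the task to controlling $\int_0^{t/2}\!\!\int V(z)\,\Phi_t(x-z)\,\Psi_s(z-y)\,dz\,ds$ with Gaussian-type weights $\Phi_t,\Psi_s$. On the range $t/2<s<t$, where the naive bound on $\nabla_x K_{t-s}$ blows up, I would instead exploit the semigroup identity $K^{\mathcal{L}}_t(x,y)=\int K^{\mathcal{L}}_{t/2}(x,z)K^{\mathcal{L}}_{t/2}(z,y)\,dz$ and expand only the first factor by Duhamel's formula on the time interval $[0,t/2]$; this ensures the $x$-gradient always lands on a heat kernel whose time variable is comparable to $t$, circumventing the singularity at $s=t$.

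Once everything is reduced to integrals of the form $\int V(z)\, e^{-c|x-z|^2/t}\, e^{-c|z-y|^2/t}\,dz$ (after performing the time integration), I would invoke $V\in B_q$ with $q>n$. Decomposing $\mathbb R^n$ into dyadic annuli around $x$ and applying H\"older's inequality with exponents $q$ and $q'$, the reverse H\"older property lets the local $L^q$ norm of $V$ on each shell be replaced by its average. The hypothesis $q>n$ (as opposed to $q>n/2$) is precisely what yields the extra factor $t^{-1/2}$ that promotes the $t^{-n/2}e^{-c|x-y|^2/t}$ from Gaussian convolution to the desired $t^{-(n+1)/2}e^{-c|x-y|^2/t}$. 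The $\rho$-decay is then extracted by repackaging the $V$-weighted Gaussian integrals via Lemma \ref{lem2.6} and using Lemma \ref{lem2.5}(i) to symmetrize between $\rho(x)$ and $\rho(y)$; iterating the perturbation expansion a finite number of times raises the power of $(1+\sqrt{t}/\rho(x)+\sqrt{t}/\rho(y))^{-1}$ to an arbitrary $N$.

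The main obstacle I anticipate is precisely the region $s\uparrow t$, where the Gaussian gradient kernel $\nabla_x K_{t-s}$ is too singular to be integrated termwise. Handling this via the semigroup-splitting trick, so that the derivative always acts on a kernel of time scale $\sim t$, is the technical heart of the argument; once this is in place, the passage from $t^{-n/2}$ to $t^{-(n+1)/2}$ is forced by the extra half-derivative of integrability provided by $V\in B_q$ with $q>n$, and the remaining manipulations are of the same flavor as those used in the proofs of Lemma \ref{pro2.2} and Lemma \ref{lem1}.
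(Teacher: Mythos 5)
The paper does not actually prove this lemma; it quotes it from \cite[Lemma 3.8]{duo}, so your proposal has to be measured against the argument in that reference. Measured that way, there is a genuine gap, and it sits exactly at the crucial decay factor $\left(1+\sqrt{t}/\rho(x)+\sqrt{t}/\rho(y)\right)^{-N}$. Your plan is to write $\nabla_x K^{\mathcal L}_t=\nabla_x K_t-\nabla_x(\text{Duhamel correction})$ and estimate the two pieces separately by absolute values. Any bound obtained this way is at least as large as $|\nabla_x K_t(x-y)|\sim t^{-(n+1)/2}e^{-c|x-y|^2/t}$, which carries no decay in $\sqrt{t}/\rho(x)$ at all; when $\sqrt{t}\gg\rho(x)$ the asserted estimate is strictly smaller than the free Gaussian gradient, so it can only arise from cancellation between the two terms, and the triangle inequality destroys precisely that cancellation. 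The tools you invoke cannot repair this: Lemma \ref{lem2.6} produces factors $(\sqrt{s}/\rho)^{\delta_0}$, i.e.\ smallness for $\sqrt{t}\lesssim\rho$, and for $t\ge\rho^2$ it gives growth $(\sqrt{t}/\rho)^{l_0}$ rather than decay; this is why the paper's own Duhamel computation (Lemma \ref{lem1-1}) yields only the factor $\min\{(\sqrt{t}/\rho(x))^{\delta_0},(\sqrt{t}/\rho(y))^{\delta_0}\}$ for the difference, not a negative power of $1+\sqrt{t}/\rho$. ``Iterating the perturbation expansion'' does not change this, since every iterate is again added, with a positive sign after taking absolute values, to the undamped free gradient term.

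The proof in \cite{duo} runs in two steps, and the second is the one your sketch is missing. First, the Duhamel argument (with the symmetric splitting as in (\ref{p-2}), or your semigroup trick near $s\uparrow t$) gives the gradient bound \emph{without} the decay factor, $|\nabla_x K^{\mathcal L}_t(x,y)|\le Ct^{-(n+1)/2}e^{-c|x-y|^2/t}$; the role of $q>n$ here is that $\delta_0=2-n/q>1$ makes the time integral near the singular endpoint, of the type $\int s^{-3/2+\delta_0/2}\,ds$, convergent --- the factor $t^{-(n+1)/2}$ itself already comes from differentiating the Gaussian, not from $V\in B_q$, so your remark about $q>n$ ``yielding the extra $t^{-1/2}$'' is also off. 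Second, the $\rho$-decay is imported through the semigroup identity $\nabla_x K^{\mathcal L}_t(x,y)=\int_{\mathbb R^n}\nabla_x K^{\mathcal L}_{t/2}(x,z)\,K^{\mathcal L}_{t/2}(z,y)\,dz$: one bounds the first factor by the decay-free gradient estimate, the second by Lemma \ref{pro2.2}(i) with the factor $\left(1+\sqrt{t}/\rho(z)+\sqrt{t}/\rho(y)\right)^{-N}$, and converts $\rho(z)$ into $\rho(x)$ via Lemma \ref{lem2.5}(i), absorbing the resulting $e^{\varepsilon|x-z|^2/t}$ into the Gaussian. You do mention the semigroup identity, but only as a device to dodge the singularity at $s\uparrow t$; its essential use is as the carrier of the decay factor, and without that step your argument proves at best the estimate with the factor $\left(1+\sqrt{t}/\rho(x)+\sqrt{t}/\rho(y)\right)^{-N}$ deleted.
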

By the perturbation theory for semigroups of operators,
\begin{eqnarray}\label{p-2}
  K_{t}(x-y)-K_{t}^{\mathcal{L}}(x,y) &=& \int^{t}_{0}\int_{\mathbb{R}^{n}}K_{t-s}(w-x)V(w)K^{\mathcal{L}}_{s}(w,y)dwds \\
   &=& \int^{t/2}_{0}\int_{\mathbb{R}^{n}}K_{t-s}(w-x)V(w)K^{\mathcal{L}}_{s}(w,y)dwds\nonumber \\
   &&+ \int^{t/2}_{0}\int_{\mathbb{R}^{n}}K_{s}(w-x)V(w)K^{\mathcal{L}}_{t-s}(w,y)dwds.\nonumber
\end{eqnarray}

Similar to \cite[Proposition 2.16]{DZ2}, we can prove the following lemma.
\begin{lemma}\label{lem1-1}
Suppose that $V\in B_{q}$ for some $q>n$. There exist constants $C,c>0$ such that
$$\Big|\nabla_{x}K_{t}(x-y)-\nabla_{x}K^{\mathcal{L}}_{t}(x,y)\Big|\leq Ct^{-(n+1)/2}e^{-c|x-y|^{2}/t}\min\Bigg\{\Big(\frac{\sqrt{t}}{\rho(x)}\Big)^{\delta_{0}},\
\Big(\frac{\sqrt{t}}{\rho(y)}\Big)^{\delta_{0}}\Bigg\}.$$
\end{lemma}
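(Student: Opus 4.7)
The plan is to apply $\nabla_{x}$ to the perturbation identity (\ref{p-2}). Since only the free heat kernels $K_{t-s}(w-x)$ and $K_{s}(w-x)$ depend on $x$, this yields the decomposition
\[
\nabla_{x}K_{t}(x-y)-\nabla_{x}K_{t}^{\mathcal{L}}(x,y) =: I_{1}+I_{2},
\]
where $I_{1}:=\int_{0}^{t/2}\int_{\mathbb R^{n}}\nabla_{x}K_{t-s}(w-x)\,V(w)\,K_{s}^{\mathcal{L}}(w,y)\,dw\,ds$ and $I_{2}:=\int_{0}^{t/2}\int_{\mathbb R^{n}}\nabla_{x}K_{s}(w-x)\,V(w)\,K_{t-s}^{\mathcal{L}}(w,y)\,dw\,ds$. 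The elementary inequality $|w-x|^{2}/(t-s)+|w-y|^{2}/s\geq |x-y|^{2}/(2t)$ will allow me to peel off the desired factor $e^{-c|x-y|^{2}/t}$ from each integrand; the residual Gaussian, localized at scale $\sqrt{s}$ around either $x$ or $y$, will be paired with $V(w)$ and integrated by means of Lemma \ref{lem2.6}.

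For $I_{1}$, since $t-s\geq t/2$, I may use the smooth bound $|\nabla_{x}K_{t-s}(w-x)|\leq Ct^{-(n+1)/2}e^{-c|w-x|^{2}/t}$, combined with the Feynman--Kac bound $K_{s}^{\mathcal{L}}(w,y)\leq Cs^{-n/2}e^{-c|w-y|^{2}/s}$. Lemma \ref{lem2.6} applied at scale $\sqrt{s}$ centered at $y$ gives $\int s^{-n/2}e^{-c|w-y|^{2}/s}V(w)\,dw\leq Cs^{-1}(\sqrt{s}/\rho(y))^{\delta_{0}}$ (in the regime $s<\rho(y)^{2}$), after which $\int_{0}^{t/2}s^{\delta_{0}/2-1}\,ds\lesssim t^{\delta_{0}/2}$ produces $|I_{1}|\leq Ct^{-(n+1)/2}e^{-c|x-y|^{2}/t}(\sqrt{t}/\rho(y))^{\delta_{0}}$. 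For $I_{2}$ the situation is analogous but with the singular factor $|\nabla_{x}K_{s}(w-x)|\leq Cs^{-(n+1)/2}e^{-c|w-x|^{2}/s}$; applying Lemma \ref{lem2.6} at scale $\sqrt{s}$ around $x$ reduces the $s$-integration to $\int_{0}^{t/2}s^{(\delta_{0}-3)/2}\,ds$. This is where the hypothesis $V\in B_{q}$ with $q>n$ will enter crucially: it guarantees $\delta_{0}=2-n/q>1$, which is precisely the condition needed for this integral to converge at $s=0$; evaluating it yields $|I_{2}|\leq Ct^{-(n+1)/2}e^{-c|x-y|^{2}/t}(\sqrt{t}/\rho(x))^{\delta_{0}}$. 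Verifying this integrability is the central technical point of the argument.

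To pass from the sum of bounds $(\sqrt{t}/\rho(y))^{\delta_{0}}+(\sqrt{t}/\rho(x))^{\delta_{0}}$ to the minimum, I will argue case-by-case using Lemma \ref{lem2.5}. When $\sqrt{t}/\rho(x)\geq 1$, the bound with $(\sqrt{t}/\rho(x))^{\delta_{0}}$ follows already from Lemma \ref{le3.8} and the classical gradient estimate for $K_{t}$, so it suffices to handle $t<\rho(x)^{2}$. In this regime, if $|x-y|\leq\rho(x)$ then Lemma \ref{lem2.5}(i) gives $\rho(y)\sim\rho(x)$, making the two factors comparable. Otherwise $|x-y|>\rho(x)$, and the same lemma yields $(\sqrt{t}/\rho(y))^{\delta_{0}}\leq C(\sqrt{t}/\rho(x))^{\delta_{0}}(1+|x-y|/\rho(x))^{k_{0}\delta_{0}}$; the polynomial excess is absorbed by writing $(|x-y|/\rho(x))^{k_{0}\delta_{0}}=(|x-y|/\sqrt{t})^{k_{0}\delta_{0}}(\sqrt{t}/\rho(x))^{k_{0}\delta_{0}}$, letting the first factor be swallowed into a slightly weakened Gaussian $e^{-c'|x-y|^{2}/t}$ and noting that $(\sqrt{t}/\rho(x))^{k_{0}\delta_{0}}\leq 1$ under the assumption $t<\rho(x)^{2}$. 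A symmetric argument delivers the $(\sqrt{t}/\rho(y))^{\delta_{0}}$-bound, completing the estimate.
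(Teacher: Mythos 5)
Your proposal is correct and follows essentially the same route as the paper: the same perturbation identity (\ref{p-2}) split into the two terms $I_{1}$, $I_{2}$, Lemma \ref{lem2.6} to handle the $w$-integrals against $V$ (with the hypothesis $q>n$ entering exactly where you say, through the convergence of $\int_{0}^{t/2}s^{(\delta_{0}-3)/2}\,ds$ for the singular term), the trivial treatment of the large-time case, and Lemma \ref{lem2.5} with absorption of the polynomial excess into a weakened Gaussian to pass between $\rho(x)$ and $\rho(y)$. The only cosmetic differences are that you extract the factor $e^{-c|x-y|^{2}/t}$ by splitting the exponents rather than splitting the region of integration in $w$, and you run the $\rho(x)\leftrightarrow\rho(y)$ conversion symmetrically to obtain the minimum, whereas the paper converts only the $\rho(x)$-factor coming from $I_{2}$.
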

\begin{proof}
If $t\geq\rho(y)^{2}$, it is easy to see that
$$\Big|\nabla_{x}K_{t}(x-y)-\nabla_{x}K^{\mathcal{L}}_{t}(x,y)\Big|\leq Ct^{-(n+1)/2}e^{-c|x-y|^{2}/t}\Big(\frac{\sqrt{t}}{\rho(y)}\Big)^{\delta_{0}}.$$
If $t<\rho(y)^{2}$, by (\ref{p-2}), we get
\begin{eqnarray}\notag
  \Big|\nabla_{x}K_{t}(x-y)-\nabla_{x}K^{\mathcal{L}}_{t}(x,y)\Big| &=& \Big|\int^{t}_{0}\int_{\mathbb{R}^{n}}\nabla_{x}K_{t-s}(x-w)V(w)K^{\mathcal{L}}_{s}(w,y)dwds\Big| \\ \label{12}
   &\leq&I_{1}+I_{2},
\end{eqnarray}
where
\begin{equation*}
\left\{ \begin{aligned}
 I_{1}&:= \int^{t/2}_{0}\int_{\mathbb{R}^{n}}|\nabla_{x}K_{t-s}(x-w)|V(w)K^{\mathcal{L}}_{s}(w,y)dwds;\\
 I_{2}&:=\int^{t/2}_{0}\int_{\mathbb{R}^{n}}|\nabla_{x}K_{s}(x-w)|V(w)K^{\mathcal{L}}_{t-s}(w,y)dwds.
 \end{aligned}\right.
\end{equation*}

For $I_{1}$, using Lemmas \ref{le3.8} and \ref{lem2.6}, we get
\begin{eqnarray*}
  I_{1} &=&\int^{t/2}_{0}\int_{|w-y|<|x-y|/2}|\nabla_{x}K_{t-s}(x-w)|V(w)K^{\mathcal{L}}_{s}(w,y)dwds\\
  &&+\int^{t/2}_{0}\int_{|w-y|\geq|x-y|/2}|\nabla_{x}K_{t-s}(x-w)|V(w)K^{\mathcal{L}}_{s}(w,y)dwds  \\
   &\leq& Ct^{-(n+1)/2}e^{-c|x-y|^{2}/t}\int^{t/2}_{0}\int_{|w-y|<|x-y|/2}V(w)s^{-n/2}e^{-c|w-y|^{2}/s}dwds\\
   &&+ Ct^{-(n+1)/2}\int^{t/2}_{0}\int_{|w-y|\geq|x-y|/2}V(w)s^{-n/2}e^{-c(|w-y|^{2}+|x-y|^{2})/s}dwds \\
   &\leq& Ct^{-(n+1)/2}e^{-c|x-y|^{2}/t}\int^{t/2}_{0}\frac{1}{s}\Big(\frac{\sqrt{s}}{\rho(y)}\Big)^{\delta_{0}}ds \\
   &=&  Ct^{-(n+1)/2}e^{-c|x-y|^{2}/t}\Big(\frac{\sqrt{t}}{\rho(y)}\Big)^{\delta_{0}}.
\end{eqnarray*}
Similar to $I_{1}$, for the term $I_{2}$, we can obtain
$$I_{2}\leq Ct^{-(n+1)/2}e^{-c|x-y|^{2}/t}\Big(\frac{\sqrt{t}}{\rho(x)}\Big)^{\delta_{0}}.$$
It follows from (i) of Lemma \ref{lem2.5} that
$$\frac{\sqrt{t}}{\rho(x)}\leq C\Bigg(1+\frac{|x-y|}{\sqrt{t}}\frac{\sqrt{t}}{\rho(y)}\Bigg)^{l_{0}}\frac{\sqrt{t}}{\rho(y)}\leq
C_{\varepsilon}e^{\varepsilon|x-y|^{2}/t}\frac{\sqrt{t}}{\rho(y)},$$
where $\varepsilon>0$ is an arbitrary small constant. Hence
$$I_{2}\leq Ct^{-(n+1)/2}e^{-c|x-y|^{2}/t}\Big(\frac{\sqrt{t}}{\rho(y)}\Big)^{\delta_{0}}.$$

\end{proof}

\begin{lemma}\label{lem2}{\rm (\cite[Proposition 2.17]{DZ2})}
Let $0<\delta<\min\{1,\delta_{0}\}$. For $|y-z|<\min\{C\rho(x),|x-y|/4\},$ there exist constants $C,c>0$ such that
$$\Big|\Big(K^{\mathcal{L}}_{t}(x,y)-K_{t}(x-y)\Big)-\Big(K^{\mathcal{L}}_{t}(x,z)-K_{t}(x-z)\Big)\Big|\leq C\Big(\frac{|y-z|}{\rho(y)}\Big)^{\delta}t^{-n/2}e^{-c|x-y|^{2}/t}.$$
\end{lemma}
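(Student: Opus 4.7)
The approach I would take starts from the perturbation identity (\ref{p-2}) applied once with $y$ and once with $z$, then subtracted:
\begin{equation*}
\bigl(K^{\mathcal{L}}_{t}(x,y)-K_{t}(x-y)\bigr)-\bigl(K^{\mathcal{L}}_{t}(x,z)-K_{t}(x-z)\bigr)
=\int_{0}^{t}\!\!\int_{\mathbb{R}^{n}} K_{t-s}(x-w)\,V(w)\,\bigl[K^{\mathcal{L}}_{s}(w,z)-K^{\mathcal{L}}_{s}(w,y)\bigr]\,dw\,ds.
\end{equation*}
The estimate thus reduces to quantifying the $y$-regularity of $K^{\mathcal{L}}_{s}(w,\cdot)$ averaged against a Gaussian and the potential. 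The hypothesis $|y-z|<|x-y|/4$ guarantees that $|x-w|\sim|x-y|$ whenever $|w-y|\le|x-y|/2$, which will let me peel off the target Gaussian factor $e^{-c|x-y|^{2}/t}$; in the complementary region the Gaussian decay of $K^{\mathcal{L}}_{s}(w,\cdot)$ itself does the same job. The second hypothesis $|y-z|<C\rho(x)$ combined with Lemma \ref{lem2.5}(i) yields $\rho(y)\sim\rho(z)\sim\rho(x)$ on the active zone, so one may freely replace $\rho(x)$ by $\rho(y)$ in the conclusion.

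The natural decomposition of the $s$-integral is at the threshold $s=|y-z|^{2}$. For $s\le|y-z|^{2}$ the Hölder estimate of Lemma \ref{pro2.2}(ii) is not available, so I would use the triangle inequality together with the Gaussian upper bound of Lemma \ref{pro2.2}(i) and insert the trivial factor $1\le(|y-z|/\sqrt{s})^{\delta}$. For $s>|y-z|^{2}$ I would apply Lemma \ref{pro2.2}(ii) directly with $h=z-y$, producing a genuine factor $(|y-z|/\sqrt{s})^{\delta}$. In both ranges the $w$-integration splits into $|w-y|<|x-y|/2$ and its complement; after extracting $e^{-c|x-y|^{2}/t}$, the resulting $w$-integrals are of the form $\int s^{-n/2}\omega(|w-y|/\sqrt{s})V(w)\,dw$ and are controlled by Lemma \ref{lem2.6}, giving factors $(\sqrt{s}/\rho(y))^{\delta}$ when $s<\rho(y)^{2}$. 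One then sees that the integrand in $s$ is a product $(|y-z|/\sqrt{s})^{\delta}(\sqrt{s}/\rho(y))^{\delta}$, in which the two half-integer powers of $s$ cancel and leave the desired constant $(|y-z|/\rho(y))^{\delta}$, as required.

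The main obstacle I anticipate is the simultaneous extraction of the $t^{-n/2}$ decay and the correct $\rho(y)$-power. Naïvely, bounding $K^{\mathcal{L}}_{s}$ by its Gaussian and inserting it into the perturbation integral produces a $(t-s)^{-n/2}$ instead of a clean $t^{-n/2}$, so one must split the outer $s$-integral into $(0,t/2]$ and $[t/2,t)$ and handle each half using the corresponding half of (\ref{p-2}); the symmetric second term of (\ref{p-2}) in which $K_{s}(w-x)$ replaces $K_{t-s}(w-x)$ takes care of the near-endpoint contribution. The Gaussian convolution $\int K_{t-s}(x-w)K_{s}(w-y)\,dw = K_{t}(x-y)$ is the guiding principle for collecting the full Gaussian factor before invoking Lemma \ref{lem2.6}. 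Finally the edge case $t\ge\rho(y)^{2}$ is dealt with directly by applying the triangle inequality together with Lemma \ref{lem1}, since then $(\sqrt{t}/\rho(y))^{\delta_{0}}\gtrsim 1$, while under the standing assumptions $|y-z|<C\rho(x)\sim\rho(y)$ so $(|y-z|/\rho(y))^{\delta}\le 1$ and the two bounds are compatible. The restriction $\delta<\min\{1,\delta_{0}\}$ is exactly what is needed to leave a small amount of exponent room for absorbing the logarithmic losses coming from Lemma \ref{lem2.6}.
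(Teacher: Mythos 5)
First, a point of reference: the paper does not prove Lemma \ref{lem2} at all (it is quoted from \cite[Proposition 2.17]{DZ2}); the closest in-paper arguments are Lemma \ref{lem2-2} and Proposition \ref{pro2.5}(ii), which put the increment on the \emph{classical} Gaussian inside the perturbation formula (\ref{p-2}) and use the mean value theorem, whereas you put it on $K^{\mathcal L}_s(w,\cdot)$ and invoke Lemma \ref{pro2.2}(ii). That skeleton is viable, but two of your steps are genuinely wrong as written. The assertion that $|y-z|<C\rho(x)$ together with Lemma \ref{lem2.5}(i) yields $\rho(y)\sim\rho(z)\sim\rho(x)$ is false: Lemma \ref{lem2.5}(i) compares $\rho$ at two points only when their distance is controlled by $\rho$ at one of them, and here nothing bounds $|x-y|$ in terms of $\rho(x)$, so $\rho(x)$ and $\rho(y)$ may have completely different sizes. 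Hence you cannot ``freely replace $\rho(x)$ by $\rho(y)$'': the half of (\ref{p-2}) carrying $K_s(w-x)V(w)K^{\mathcal L}_{t-s}(w,y)$ produces, via Lemma \ref{lem2.6} applied at $x$, factors in $\rho(x)$, and converting them into $\rho(y)$ costs a factor of the type $C_\epsilon e^{\epsilon|x-y|^{2}/t}(1+\sqrt t/\rho(y))^{a}$ coming from Lemma \ref{lem2.5}(i), which must then be absorbed into the Gaussian (and, in some regimes, removed by interpolating with the crude bound). This absorption mechanism --- not ``logarithmic losses from Lemma \ref{lem2.6}'' --- is what the strict inequality $\delta<\min\{1,\delta_0\}$ is for; compare the treatment of $I_2$ in Lemma \ref{lem1-1} and the interpolation step in Lemma \ref{lem2-2}. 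Your sketch omits it entirely.

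The second gap is the regime $t\ge\rho(y)^{2}$. The triangle inequality plus Lemma \ref{lem1} gives a bound with the factor $(\sqrt t/\rho(x))^{\delta_0}\gtrsim 1$, while the target factor $(|y-z|/\rho(y))^{\delta}$ can be arbitrarily small there (take $|y-z|\ll\rho(y)\le\sqrt t$); saying the two bounds are ``compatible'' does not prove the inequality --- you need genuine smallness in $|y-z|$. When $|y-z|<\sqrt t$ this comes from the H\"older estimate of Lemma \ref{pro2.2}(ii) and its classical counterpart, since then $(|y-z|/\sqrt t)^{\delta}\le C(|y-z|/\rho(y))^{\delta}$, or alternatively from continuing the perturbation argument with the $l_0$-branch of Lemma \ref{lem2.6} and the $(1+\sqrt s/\rho(y))^{-N}$ decay, as in the $t\ge 2\rho(y)^{2}$ part of the proof of Lemma \ref{lem2-2}. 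The crude route you propose does work in the different case $\sqrt t\le|y-z|$, but only because there $(\sqrt t/\rho(x))^{\delta_0}\le C(|y-z|/\rho(x))^{\delta}$ (using $\delta<\delta_0$ and $|y-z|\lesssim\rho(x)$), again followed by the $\rho(x)\to\rho(y)$ conversion. Finally, the claimed exact cancellation of the powers of $s$ only occurs if the exponent in Lemma \ref{lem2.6} equals your H\"older exponent; with $\delta<\delta_0$ a leftover $(\sqrt t/\rho(y))^{\delta_0-\delta}$ remains, which is harmless only after the case distinction $t\lessgtr\rho(y)^{2}$ is made correctly. These defects are repairable, but they sit exactly at the two places where the proof is nontrivial.
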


\begin{lemma}\label{lem2-2}
Suppose that $V\in B_{q}$ for some $q>n$. Let $\delta_{1}=1-n/q$. If $|u|\leq \min\{|x-y|/4, C\rho(x)\}$, there exist constants $C,c>0$ such that for $0<\delta'<\delta_{1}$,
\begin{multline*}
 \Big|\Big(\nabla_{x}K^{\mathcal{L}}_{t}(x,y)-\nabla_{x}K_{t}(x-y)\Big)-\Big(\nabla_{x}K^{\mathcal{L}}_{t}(x+u,y)-\nabla_{x}K_{t}(x+u-y)\Big)\Big|\\
  \quad\leq Ct^{-(n+1)/2}e^{-c|x-y|^{2}/t}\Big(\frac{|u|}{\rho(y)}\Big)^{\delta'}.
\end{multline*}
\end{lemma}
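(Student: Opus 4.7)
The strategy mirrors that of Lemmas~\ref{lem1-1} and~\ref{lem2}: I would differentiate the Duhamel identity~(\ref{p-2}) in $x$, take the difference of the resulting expressions at $x$ and $x+u$, and estimate the two pieces by combining Hölder/Lipschitz continuity of the classical heat kernel gradient with Lemma~\ref{lem2.6}. Up to a sign, the left-hand side equals $J_1+J_2$, where
\begin{align*}
J_1 &:= \int_0^{t/2}\!\!\!\int_{\mathbb R^n}\bigl[\nabla_x K_{t-s}(x-w)-\nabla_x K_{t-s}(x+u-w)\bigr] V(w) K^{\mathcal L}_s(w,y)\,dw\,ds,\\
J_2 &:= \int_0^{t/2}\!\!\!\int_{\mathbb R^n}\bigl[\nabla_x K_{s}(x-w)-\nabla_x K_{s}(x+u-w)\bigr] V(w) K^{\mathcal L}_{t-s}(w,y)\,dw\,ds.
\end{align*}
I then case-split on the size of $|u|$ relative to $\sqrt{t}$.

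In the easy regime $|u|\ge\sqrt{t}$, I would apply Lemma~\ref{lem1-1} pointwise at both $x$ and $x+u$; since $|u|\le|x-y|/4$, the Gaussians $e^{-c|x-y|^2/t}$ and $e^{-c|x+u-y|^2/t}$ are comparable, yielding the bound $Ct^{-(n+1)/2}e^{-c|x-y|^2/t}(\sqrt{t}/\rho(y))^{\delta_0}$. I would then write $(\sqrt{t}/\rho(y))^{\delta_0}=(|u|/\rho(y))^{\delta'}(\sqrt{t}/|u|)^{\delta'}(\sqrt{t}/\rho(y))^{\delta_0-\delta'}$, discard the middle factor (which is at most one), and absorb the last factor using Lemma~\ref{lem2.5}(i) together with $\sqrt{t}\le|u|\le C\rho(x)$ and a small reduction of the exponent in the Gaussian. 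In the main regime $|u|<\sqrt{t}$, the term $J_1$ is the easier one: since $t-s\ge t/2$, the mean value theorem applied to $\nabla^2_x K_{t-s}$ and interpolated with the trivial bound gives
$$|\nabla_x K_{t-s}(x-w)-\nabla_x K_{t-s}(x+u-w)|\le C(|u|/\sqrt{t})^{\delta'}t^{-(n+1)/2}e^{-c|x-w|^2/t}.$$
Splitting the $w$-integral at $|w-y|=|x-y|/2$, using the Gaussian decay of $K^{\mathcal L}_s(w,y)$ in the far region and Lemma~\ref{lem2.6} to control $\int V(w)K^{\mathcal L}_s(w,y)\,dw$ in the near region, then integrating in $s$, one obtains a bound of the shape $C(|u|/\sqrt{t})^{\delta'}(\sqrt{t}/\rho(y))^{\delta_0}t^{-(n+1)/2}e^{-c|x-y|^2/t}$; using $\delta'<\delta_1<\delta_0$ and $|u|<\sqrt{t}$, this rearranges into the required estimate.

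The main obstacle is $J_2$, because $\nabla_x K_s$ is singular as $s\to 0^+$. I would split the $s$-interval at $s=|u|^2$: on $(|u|^2,t/2)$ one has $|u|<\sqrt{s}$, so the Hölder estimate used for $J_1$ applies with $s$ in place of $t-s$. On $(0,|u|^2)$ I instead use the triangle inequality
$$|\nabla_x K_s(x-w)-\nabla_x K_s(x+u-w)|\le Cs^{-(n+1)/2}\bigl(e^{-c|x-w|^2/s}+e^{-c|x+u-w|^2/s}\bigr),$$
bound $K^{\mathcal L}_{t-s}(w,y)$ by $Ct^{-n/2}e^{-c|w-y|^2/t}$ (valid because $t-s\ge t/2$), and again split the $w$-integral at $|w-y|=|x-y|/2$. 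Invoking Lemma~\ref{lem2.6} reduces the $s$-integration to $\int_0^{|u|^2}s^{-3/2+\delta_0/2}\,ds$, which converges precisely because $\delta_0=2-n/q>1$ under the hypothesis $q>n$, yielding a factor $|u|^{\delta_0-1}=|u|^{\delta_1}$. Combined with the $\rho$-weights produced by Lemma~\ref{lem2.6}, this gives $(|u|/\rho(y))^{\delta'}$ for any $\delta'<\delta_1$. It is exactly the convergence of this small-$s$ integral that forces the Hölder exponent to be $\delta_1=1-n/q$ rather than the $\delta_0$ that appears in Lemmas~\ref{lem1-1} and~\ref{lem2}.
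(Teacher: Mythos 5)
Your skeleton is essentially the paper's: differentiate the Duhamel identity (\ref{p-2}), split the $s$-integral at $|u|^{2}$, use Lipschitz/H\"older bounds for $\nabla_{x}K_{s}$ together with Lemma~\ref{lem2.6}, and you correctly locate the source of $\delta_{1}$ in the convergence of $\int_{0}^{|u|^{2}}s^{-3/2+\delta_{0}/2}\,ds$ (i.e. $\delta_{0}>1$, i.e. $q>n$), which produces $|u|^{\delta_{0}-1}=|u|^{\delta_{1}}$. The genuine gap is in the large-time regime $\sqrt{t}\gtrsim\rho$ and in the bookkeeping of the $\rho$-weights. First, every application of Lemma~\ref{lem2.6} to a Gaussian centred at $x$ yields $(\sqrt{s}/\rho(x))^{\delta_{0}}$, and only for $s<\rho(x)^{2}$ (for $s\ge\rho(x)^{2}$ the exponent degrades to $l_{0}$ and the $s$-integral grows); the statement requires $\rho(y)$, and when $\rho(y)\gg\rho(x)$ the $\rho(x)$-bound does not imply the $\rho(y)$-bound. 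Converting $\rho(x)$ into $\rho(y)$ via Lemma~\ref{lem2.5} costs a factor of size $(1+|x-y|/\rho(x))^{Ck_{0}}$, absorbable only at the price of an $e^{\epsilon|x-y|^{2}/t}$ loss combined with either the $(1+\sqrt{t}/\rho(y))^{-N}$ decay of Lemma~\ref{pro2.2}(i) or a final interpolation with the crude bound (\ref{eq-10}); your assertion that the ``$\rho$-weights produced by Lemma~\ref{lem2.6}'' already give $(|u|/\rho(y))^{\delta'}$ skips this entirely. Second, and more seriously, you bound $K^{\mathcal{L}}_{t-s}(w,y)\le Ct^{-n/2}e^{-c|w-y|^{2}/t}$, discarding the factor $(1+\sqrt{t}/\rho(y))^{-N}$. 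Without it, your small-$s$ piece of $J_{2}$ only yields $Ct^{-n/2}e^{-c|x-y|^{2}/t}|u|^{\delta_{1}}\rho(x)^{-\delta_{0}}$, which is \emph{not} dominated by the target $Ct^{-(n+1)/2}e^{-c|x-y|^{2}/t}(|u|/\rho(y))^{\delta'}$ once $\sqrt{t}\gg\rho(x)\sim\rho(y)$ and $|u|\sim\rho(x)$: the ratio is of order $\sqrt{t}/\rho(x)\to\infty$. The same defect occurs in your middle range $s\in(|u|^{2},t/2)$, where the $s$-integral contributes $t^{(\delta_{1}-\delta')/2}\rho(x)^{-\delta_{0}}$ (and, for $t\gg\rho(x)^{2}$, part of that range falls under the $l_{0}$-branch of Lemma~\ref{lem2.6}, which you never treat), and also in the ``rearrangement'' for $J_{1}$, where $(\sqrt{t}/\rho(y))^{\delta_{0}-\delta'}$ is unbounded once $t\ge\rho(y)^{2}$.

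This is exactly where the paper's proof spends its effort: it retains the $(1+\sqrt{t}/\rho(y))^{-N}$ factor and spends one power of it to absorb the mismatch $\sqrt{t}/|u|$ (this trade, together with the convergent $s$-integral, is how the exponent closes at $\delta_{1}$), it splits the $s$- and $t$-ranges additionally at $\rho(x)^{2}$ and $\rho(y)^{2}$ using both branches of Lemma~\ref{lem2.6}, and it proves first the intermediate estimate (\ref{13}) with an admissible $e^{\epsilon|x-y|^{2}/t}$ growth, recovering the Gaussian decay and the strict loss $\delta'<\delta_{1}$ only at the end by geometric interpolation with the crude bound (\ref{eq-10}). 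Your argument is repairable along those lines, but as written the concluding inequalities fail whenever $t\gtrsim\rho(x)^{2}$ or $\rho(y)\gg\rho(x)$.
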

\begin{proof}
We prove this lemma by the same argument as Lemma \ref{lem1-1}. It is enough to verify that
\begin{eqnarray}\label{13}
  &&\Big|\Big(\nabla_{x}K^{\mathcal{L}}_{t}(x,y)-\nabla_{x}K_{t}(x-y)\Big)-\Big(\nabla_{x}K^{\mathcal{L}}_{t}(x+u,y)-\nabla_{x}K_{t}(x+u-y)\Big)\Big|\\
  &&\quad\leq C_{\epsilon}t^{-(n+1)/2}e^{\epsilon|x-y|^{2}/t}\Big(\frac{|u|}{\rho(y)}\Big)^{\delta_{1}},\nonumber
\end{eqnarray}
where $\epsilon>0$ is an arbitrary small constant. In fact, under the condition $|u|<|x-y|/4$, it is easy to see that $|x-y|\sim |x+u-y|$. We can deduce from Lemma \ref{le3.8} that
\begin{eqnarray}\label{eq-10}
&&\Big|\Big(\nabla_{x}K^{\mathcal{L}}_{t}(x,y)-\nabla_{x}K_{t}(x-y)\Big)-\Big(\nabla_{x}K^{\mathcal{L}}_{t}(x+u,y)-\nabla_{x}K_{t}(x+u-y)\Big)\Big|\\
&&\quad\leq \Big|\nabla_{x}K^{\mathcal{L}}_{t}(x,y)\Big|+\Big|\nabla_{x}K_{t}(x-y)\Big|+\Big|\nabla_{x}K^{\mathcal{L}}_{t}(x+u,y)\Big|+\Big|\nabla_{x}K_{t}(x+u-y)\Big|\nonumber\\
&&\quad\leq C t^{-(n+1)/2}e^{-c{{|x-y|}^2}/t}.\nonumber
\end{eqnarray}
Then, for $\delta'\in(0,\delta_{1})$, it follows from (\ref{13})\ and  (\ref{eq-10}) that
\begin{eqnarray*}
&&\Big|\Big(\nabla_{x}K^{\mathcal{L}}_{t}(x,y)-\nabla_{x}K_{t}(x-y)\Big)-\Big(\nabla_{x}K^{\mathcal{L}}_{t}(x+u,y)-\nabla_{x}K_{t}(x+u-y)\Big)\Big|\\
&&\quad\leq C \Bigg\{t^{-(n+1)/2}e^{\epsilon|x-y|^{2}/t}\Big(\frac{|u|}{\rho(y)}\Big)^{\delta_{1}}\Bigg\}^{\delta'/\delta_{1}}
\Bigg\{t^{-(n+1)/2}e^{-c{{|x-y|}^2}/t}\Bigg\}^{1-\delta'/\delta_{1}},
\end{eqnarray*}
 which  gives the desired estimate.

 Now we prove (\ref{13}). Since the case for $|u|\geq\rho(y)$ is trivial, so we may assume $|u|<\rho(y)$. If $t\leq 2|u|^{2}$, the required estimate follows from Lemma \ref{lem1-1}. Hence we  consider the case $t>2|u|^{2}$ only. Recall that for the classical heat kernel $K_{t}(\cdot)$, it holds
$$\Big|\nabla^{2}_{x}K_{t}(x)\Big|\leq Ct^{-n/2-1}e^{-c|x|^{2}/t}.$$
A direct computation gives
\begin{equation}\label{14}
  \Big|\nabla_{x}K_{t}(x+u)-\nabla_{x}K_{t}(x)\Big|\leq C|u|t^{-n/2-1},
\end{equation}
and for $|u|\leq |x|/2$,
\begin{equation}\label{15}
  \Big|\nabla_{x}K_{t}(x+u)-\nabla_{x}K_{t}(x)\Big|\leq C|u|t^{-n/2-1}e^{-c|x|^{2}/t}.
\end{equation}
Similar to (\ref{12}), we split $$\Big|\Big(\nabla_{x}K^{\mathcal{L}}_{t}(x,y)-\nabla_{x}K_{t}(x-y)\Big)-\Big(\nabla_{x}K^{\mathcal{L}}_{t}(x+u,y)-\nabla_{x}K_{t}(x+u-y)\Big)\Big|\leq J_{1}+J_{2},$$ where
\begin{equation*}
\left\{  \begin{aligned}
 J_{1}&:=\int^{t/2}_{0}\int_{\mathbb{R}^{n}}\Big|\nabla_{x}K_{t-s}(w-(x+u))-\nabla_{x}K_{t-s}(w-x)\Big|V(w)K^{\mathcal{L}}_{s}(w,y)dwds;\\
 J_{2}&:=\int^{t/2}_{0}\int_{\mathbb{R}^{n}}\Big|\nabla_{x}K_{s}(w-(x+u))-\nabla_{x}K_{s}(w-x)\Big|V(w)K^{\mathcal{L}}_{t-s}(w,y)dwds.
  \end{aligned}\right.
\end{equation*}
For $J_{1}$, if $t<2\rho(y)^{2}$, using Lemma \ref{lem2.6} and (\ref{14}), we get
\begin{eqnarray*}
  J_{1} &\leq& C|u|t^{-n/2-1}\int^{t/2}_{0}\int_{\mathbb{R}^{n}}V(w)s^{-n/2}e^{-c|y-w|^{2}/s}dwds  \\
   &\leq& C|u|t^{-n/2-1}\Big(\frac{\sqrt{t}}{\rho(y)}\Big)^{\delta_{0}} \\
   &\leq& Ct^{-(n+1)/2}\Big(\frac{|u|}{\rho(y)}\Big)^{\delta_{1}}.
\end{eqnarray*}
If $t\geq 2\rho(y)^{2}$, applying Lemmas \ref{lem2.6} and  \ref{pro2.2}, we have
\begin{eqnarray*}
  J_{1} &\leq&C|u|t^{-n/2-1}\int^{t/2}_{0}\int_{\mathbb{R}^{n}}V(w)s^{-n/2}e^{-c|y-w|^{2}/s}\Big(1+\frac{\sqrt{s}}{\rho(y)}\Big)^{-N}dwds  \\
   &\leq&C|u|t^{-n/2-1}\Bigg\{\int^{\rho(y)^{2}}_{0}\frac{1}{s}\Big(\frac{\sqrt{s}}{\rho(y)}\Big)^{\delta_{0}}ds+\int^{t/2}_{\rho(y)^{2}}
   \frac{1}{s}\Big(\frac{\sqrt{s}}{\rho(y)}\Big)^{l_{0}-N}ds\Bigg\} \\
   &\leq&C|u|t^{-n/2-1}  \\
   &\leq& Ct^{-(n+1)/2}\Big(\frac{|u|}{\rho(y)}\Big)^{\delta_{1}},
\end{eqnarray*}
where $N$ is chosen large enough satisfying $N>l_{0}$.

To estimate $J_{2}$, we use Lemma \ref{pro2.2} and write $J_{2}\leq C(J_{2,1}+J_{2,2}+J_{2,3})$, where
\begin{equation*}
\left\{  \begin{aligned}
 J_{2,1}&:=t^{-n/2}\Big(1+\frac{\sqrt{t}}{\rho(y)}\Big)^{-N}\int^{|u|^{2}}_{0}\int_{\mathbb{R}^{n}}\Big|\nabla_{x}K_{s}(w-(x+u))-\nabla_{x}K_{s}
(w-x)\Big|V(w)dwds;\\
J_{2,2}&:=t^{-n/2}\Big(1+\frac{\sqrt{t}}{\rho(y)}\Big)^{-N}\int^{t/2}_{|u|^{2}}\int_{|w-x|<2|u|}\Big|\nabla_{x}K_{s}(w-(x+u))-\nabla_{x}K_{s}
(w-x)\Big|V(w)dwds;\\
J_{2,3}&:=t^{-n/2}\Big(1+\frac{\sqrt{t}}{\rho(y)}\Big)^{-N}\int^{t/2}_{|u|^{2}}\int_{|w-x|\geq2|u|}\Big|\nabla_{x}K_{s}(w-(x+u))-\nabla_{x}K_{s}
(w-x)\Big|V(w)dwds.
\end{aligned}\right.
\end{equation*}
Notice that $\rho(x+u)\sim\rho(x)$ as $|u|\leq \rho(x)$. It holds
\begin{eqnarray*}
  J_{2,1} &\leq& Ct^{-n/2}\Big(1+\frac{\sqrt{t}}{\rho(y)}\Big)^{-N}\int^{|u|^{2}}_{0}\frac{1}{s^{3/2}}\Big(\frac{\sqrt{s}}{\rho(x)}\Big)^{\delta_{0}}ds  \\
   &\leq&Ct^{-n/2}|u|^{-1}\Big(\frac{|u|}{\rho(y)}\Big)^{-1}\Big(\frac{\sqrt{t}}{|u|}\Big)^{-1}\Big(1+\frac{\sqrt{t}}{\rho(y)}\Big)^{-N+1}
   \Big(\frac{|u|}{\rho(y)}\Big)^{\delta_{0}}\Big(\frac{\rho(y)}{\rho(x)}\Big)^{\delta_{0}}  \\
   &\leq& Ct^{-(n+1)/2}\Big(1+\frac{\sqrt{t}}{\rho(y)}\Big)^{-N+1}\Big(\frac{|u|}{\rho(y)}\Big)^{\delta_{1}}\Big(\frac{\rho(y)}{\rho(x)}\Big)^{\delta_{0}},
\end{eqnarray*}
where in the last inequality we have used the fact that $\delta_{0}=2-n/q$. By Lemmas \ref{lem2.4}\ and \ \ref{lem2.5}, we apply (\ref{14}) to get
\begin{eqnarray*}
  J_{2,2} &\leq& Ct^{-n/2}\Big(1+\frac{\sqrt{t}}{\rho(y)}\Big)^{-N}\int^{t/2}_{|u|^{2}}\int_{|w-x|<2|u|}|u|s^{-n/2-1}V(w)dwds \\
   &\leq& Ct^{-n/2}\Big(1+\frac{\sqrt{t}}{\rho(y)}\Big)^{-N}\int^{t/2}_{|u|^{2}}|u|^{n-1}s^{-n/2-1}\Big(\frac{|u|}{\rho(x)}\Big)^{\delta_{0}}ds \\
   &\leq& Ct^{-n/2}|u|^{-1}\Big(\frac{|u|}{\rho(y)}\Big)^{\delta_{0}}\Big(1+\frac{\sqrt{t}}{\rho(y)}\Big)^{-N}\Big(\frac{\rho(y)}{\rho(x)}\Big)^{\delta_{0}} \\
   &\leq& Ct^{-(n+1)/2}\Big(1+\frac{\sqrt{t}}{\rho(y)}\Big)^{-N+1}\Big(\frac{|u|}{\rho(y)}\Big)^{\delta_{1}}\Big(\frac{\rho(y)}{\rho(x)}\Big)^{\delta_{0}}.
\end{eqnarray*}
For $J_{2,3}$, if $t\leq 2\rho(x)^{2}$, it can be deduced from Lemma \ref{lem2.6} and (\ref{15}) that
\begin{eqnarray*}
  J_{2,3} &\leq&Ct^{-n/2}\Big(1+\frac{\sqrt{t}}{\rho(y)}\Big)^{-N}\int^{t/2}_{|u|^{2}}\int_{|w-x|\geq 2|u|}|u|s^{-n/2-1}e^{-c|w-x|^{2}/s}V(w)dwds  \\
   &\leq&Ct^{-n/2}\Big(1+\frac{\sqrt{t}}{\rho(y)}\Big)^{-N}|u|\int^{t/2}_{|u|^{2}}\frac{1}{s^{2}}\Big(\frac{\sqrt{s}}{\rho(x)}\Big)^{\delta_{0}}ds  \\
   &\leq&Ct^{-n/2}\Big(1+\frac{\sqrt{t}}{\rho(y)}\Big)^{-N}|u|^{-1}\Big(\frac{|u|}{\rho(y)}\Big)^{\delta_{0}}
   \Big(\frac{\rho(y)}{\rho(x)}\Big)^{\delta_{0}}  \\
   &\leq& Ct^{-(n+1)/2}\Big(1+\frac{\sqrt{t}}{\rho(y)}\Big)^{-N+1}\Big(\frac{|u|}{\rho(y)}\Big)^{\delta_{1}}\Big(\frac{\rho(y)}{\rho(x)}\Big)^{\delta_{0}}.
\end{eqnarray*}
If $t>2\rho(x)^{2}$, then
\begin{eqnarray}\notag
  J_{2,3} &\leq& Ct^{-n/2}\Big(1+\frac{\sqrt{t}}{\rho(y)}\Big)^{-N}\int^{\rho(x)^{2}}_{|u|^{2}}\int_{|w-x|\geq 2|u|}\Big|\nabla_{x}K_{s}(w-(x+u))-\nabla_{x}K_{s}(w-x)\Big|V(w)dwds  \\ \notag
  &&+ Ct^{-n/2}\Big(1+\frac{\sqrt{t}}{\rho(y)}\Big)^{-N}\int^{t/2}_{\rho(x)^{2}}\int_{|w-x|\geq 2|u|}|u|s^{-n/2-1}e^{-c|w-x|^{2}/s}V(w)dwds\\ \notag
   &\leq& Ct^{-(n+1)/2}\Big(\frac{|u|}{\rho(y)}\Big)^{\delta_{1}}\Big(1+\frac{\sqrt{t}}{\rho(y)}\Big)^{-N+1}\Big(\frac{\rho(y)}{\rho(x)}\Big)^{\delta_{0}}\\
   \label{e-1}
   &&+Ct^{-n/2}\Big(1+\frac{\sqrt{t}}{\rho(y)}\Big)^{-N}\int^{t/2}_{\rho(x)^{2}}\frac{|u|}{s}\frac{1}{s}\Big(\frac{\sqrt{s}}{\rho(x)}\Big)^{l_{0}}
   ds  \\ \notag
   &\leq& Ct^{-(n+1)/2}\Big(\frac{|u|}{\rho(y)}\Big)^{\delta_{1}}\Big(1+\frac{\sqrt{t}}{\rho(y)}\Big)^{-N+1}\Big(\frac{\rho(y)}{\rho(x)}\Big)^{\delta_{0}}  \\ \notag
   &&+Ct^{-(n+1)/2}\frac{|u|}{\rho(y)}\Big(1+\frac{\sqrt{t}}{\rho(y)}\Big)^{-N-1+l_{0}}\Big(\frac{\rho(y)}{\rho(x)}\Big)^{l_{0}},
\end{eqnarray}
where in (\ref{e-1}) we have used the estimate obtained for $t\leq 2\rho(x)^{2}$ and Lemma \ref{lem2.6} for $s\geq \rho(x)^{2}$.

By Lemma \ref{lem2.5},
$$\frac{\rho(y)}{\rho(x)}\leq C\Big(1+\frac{|x-y|}{\sqrt{t}}\frac{\sqrt{t}}{\rho(y)}\Big)^{m_{0}}\leq C_{\epsilon}e^{\epsilon|x-y|^{2}/t}
\Big(1+\frac{\sqrt{t}}{\rho(y)}\Big)^{m_{0}},$$
where $\epsilon>0$ is an arbitrary small constant. Choosing $N$ large enough in the estimates of $J_{2,1},J_{2,2}$ and $J_{2,3}$, we obtain (\ref{13}) and hence Lemma \ref{lem2-2} is proved.

\end{proof}

 We can obtain the following estimates, which generalize \cite[Lemmas 3.7\ and \ 3.8]{Zhang}. We also refer to \cite[(57)]{LL} for the case $m=1$ in the setting of Heisenberg groups.
\begin{proposition}\label{pro2.5}
\item{\rm (i)} There exist constants $C,c>0$ such that
$$\Big|t^{m}\partial_{t}^{m}K^{\mathcal{L}}_{t}(x,y)-t^{m}\partial_{t}^{m}K_{t}(x-y)\Big|\leq Ct^{-n/2}e^{-c|x-y|^{2}/t}\min\Bigg\{\Big(\frac{\sqrt{t}}{\rho(x)}\Big)^{\delta_{0}},
\frac{\sqrt{t}}{\rho(y)}\Big)^{\delta_{0}}\Bigg\}.$$
\item{\rm (ii)} For every $0<\delta<\min\{1,\delta_{0}\}$ and $|y-z|<\min\{\rho(x),|x-y|/4\}$, there exist constants $C,c>0$ such that
    \begin{multline*}
    \Bigg|\Big(t^{m}\partial_{t}^{m}K^{\mathcal{L}}_{t}(x,y)-t^{m}\partial_{t}^{m}K_{t}(x-y)\Big)
-\Big(t^{m}\partial_{t}^{m}K^{\mathcal{L}}_{t}(x,z)-t^{m}\partial_{t}^{m}K_{t}(x-z)\Big)\Bigg|\\ \leq C\Big(\frac{|y-z|}{\rho(y)}\Big)^{\delta}t^{-n/2}e^{-c|x-y|^{2}/t}.
\end{multline*}
\end{proposition}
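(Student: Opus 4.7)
The plan is to reduce the time-derivative bounds in Proposition \ref{pro2.5} to the pointwise kernel estimates of Lemmas \ref{lem1} and \ref{lem2} by exploiting the holomorphy of the semigroups $\{e^{-t\mathcal{L}}\}$ and $\{e^{-t(-\Delta)}\}$ in the right half-plane. For fixed $x,y$ with $x\neq y$ and any $t>0$, Cauchy's integral formula on the circle $|\zeta-t|=t/2$, which lies in $\{\mathrm{Re}\,\zeta>0\}$, yields
$$t^m\partial_t^m\bigl[K_t^{\mathcal L}(x,y)-K_t(x-y)\bigr]=\frac{m!\,t^m}{2\pi i}\oint_{|\zeta-t|=t/2}\frac{K_\zeta^{\mathcal L}(x,y)-K_\zeta(x-y)}{(\zeta-t)^{m+1}}\,d\zeta,$$
so that the $t$-powers from the prefactor, the contour length $\pi t$, and the denominator $(t/2)^{m+1}$ cancel, leaving
$$\bigl|t^m\partial_t^m[K_t^{\mathcal L}(x,y)-K_t(x-y)]\bigr|\leq C_m\sup_{|\zeta-t|=t/2}\bigl|K_\zeta^{\mathcal L}(x,y)-K_\zeta(x-y)\bigr|.$$

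For (i) I would then establish the complex-time extension of Lemma \ref{lem1}: its proof uses only the Duhamel identity and the Gaussian bound of Lemma \ref{pro2.2}, both of which remain valid for $\zeta$ in a sector about the positive real axis provided one replaces $t$ by $|\zeta|$ in prefactors and by $\mathrm{Re}(\zeta)$ in Gaussian exponents. On the circle $|\zeta-t|=t/2$ we have $|\zeta|\sim\mathrm{Re}(\zeta)\sim t$, so the extension reads
$$|K_\zeta^{\mathcal L}(x,y)-K_\zeta(x-y)|\leq C(\sqrt t/\rho(x))^{\delta_0}t^{-n/2}e^{-c|x-y|^2/t}.$$
Substitution gives (i) with $\rho(x)$; the $\rho(y)$-version follows from the symmetry $K_\zeta^{\mathcal L}(x,y)=K_\zeta^{\mathcal L}(y,x)$, and their minimum is the claimed bound. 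Part (ii) proceeds identically, applied to the $(y,z)$-second difference and using a complex-time analogue of Lemma \ref{lem2}.

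The main obstacle is verifying the sectorial extensions of Lemmas \ref{lem1} and \ref{lem2}; this is a tedious but routine re-examination of the proofs in \cite{DZ2} for complex $\zeta$. As a self-contained alternative, one may differentiate the splitting (\ref{p-2}) directly $m$ times in the style of Lemmas \ref{lem1-1} and \ref{lem2-2}. This produces boundary terms at $s=t/2$ (controlled by Lemma \ref{pro2.2} applied to $K_{t/2}^{\mathcal L}$ and $K_{t/2}$) and interior integrals involving $|\partial_t^k K_{t-s}|\leq C(t-s)^{-n/2-k}e^{-c|\cdot|^2/(t-s)}$ and the corresponding bound on $|\partial_t^k K_{t-s}^{\mathcal L}|$, after which the $V$-integrals are estimated via Lemma \ref{lem2.6} exactly as in Lemma \ref{lem1-1}; the H\"older-type statement (ii) is then obtained by the telescoping argument of Lemma \ref{lem2-2}, splitting according to the cases $t\leq|y-z|^2$ or $t>|y-z|^2$ and likewise $t\leq\rho(x)^2$ or $t>\rho(x)^2$.
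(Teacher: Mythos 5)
Your fallback argument is essentially the paper's own proof, while your primary argument is a genuinely different reduction; both are viable, but the primary one leans on estimates you have not supplied. The paper proves (ii) exactly along the lines of your ``self-contained alternative'': it applies $t^{m}\frac{d^{m}}{dt^{m}}$ directly to the perturbation identity (\ref{p-2}), which after the Leibniz-type bookkeeping produces boundary terms at $s=t/2$ (the terms $E_{1},E_{2},E^{j}_{3,1},E^{j}_{3,2}$, controlled by Lemma \ref{pro2.2}, time-derivative Gaussian bounds for $K^{\mathcal L}_{t/2}$, the mean-value estimates (\ref{24}) and Lemma \ref{lem2.6}) plus the interior integrals $E_{4},E_{5}$ handled as in \cite{LL}; for (i) the paper simply cites \cite[Lemmas 4.10, 4.11]{wyx}, which again is the direct-differentiation route. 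Your Cauchy-formula approach is cleaner in that it avoids this combinatorics entirely: once $\zeta\mapsto K^{\mathcal L}_{\zeta}(x,y)-K_{\zeta}(x-y)$ is known to be holomorphic and to satisfy the zero-order bounds of Lemmas \ref{lem1} and \ref{lem2} uniformly on $|\zeta-t|=t/2$, both (i) and (ii) follow at once, with the $\rho(y)$ half of the minimum in (i) coming from kernel symmetry as you say. What it buys is brevity and the automatic treatment of all $m$ at once; what it costs is that the sectorial versions of Lemmas \ref{lem1} and \ref{lem2} are nowhere in the paper or its references, and calling them ``routine'' understates the work: beyond complex-time Gaussian bounds for $K^{\mathcal L}_{\zeta}$ (obtainable by Phragm\'en--Lindel\"of/analyticity arguments), the complex analogue of Lemma \ref{lem2} in the regime $t\gtrsim\rho(y)^{2}$ needs the H\"older estimate of Lemma \ref{pro2.2}(ii) together with the decay factor $(1+\sqrt{t}/\rho(y))^{-N}$ transferred to complex time, which is a nontrivial extension that should be proved, not asserted. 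So as written the primary route has a real (though fillable) gap, while your alternative route is correct and coincides with the paper's argument, modulo writing out the Leibniz expansion and the case analysis that the paper records explicitly.
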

\begin{proof}For $t>0$ and $m\in\mathbb{Z}_{+}$, define
$Q^{\mathcal{L}}_{t,m}(x,y):= t^{m}\partial_{t}^{m}K^{\mathcal{L}}_{t}(x,y)$
and $Q_{t,m}(x-y):= t^{m}\partial_{t}^{m}K_{t}(x-y).$
The proof of (i) is similar to \cite[Lemmas 4.10\ and \ 4.11]{wyx}, so we omit the details.

For (ii), by (\ref{p-2}), we get
\begin{eqnarray*}
 &&\Big(K_{t}(x+u-y)-K^{\mathcal{L}}_{t}(x+u,y)\Big)-\Big(K_{t}(x-y)-K_{t}^{\mathcal{L}}(x,y)\Big)\\  &&= \int^{t/2}_{0}\int_{\mathbb{R}^{n}}\Big(K_{t-s}(w-(x+u))-K_{t-s}(w-x)\Big)V(w)K^{\mathcal{L}}_{s}(w,y)dwds\\
 &&\quad + \int^{t/2}_{0}\int_{\mathbb{R}^{n}}\Big(K_{s}(w-(x+u))-K_{s}(w-x)\Big)V(w)K^{\mathcal{L}}_{t-s}(w,y)dwds.
\end{eqnarray*}
Similar to \cite[Proposition 4.8]{wyx}, we use a direct calculus to deduce that
\item{\rm (1)} If $m$ is even with $m\geq 2$, there exists a sequence of coefficients $\{C_{m,j}\}_{m\geq 2,2\leq j\leq m/2}$ such that
\begin{eqnarray}\label{eq-2.2}
  &&t^{m}\frac{d^{m}}{dt^{m}}\Big\{\Big(K_{t}(x+u-y)-K^{\mathcal{L}}_{t}(x+u,y)\Big)-\Big(K_{t}(x-y)-K_{t}^{\mathcal{L}}(x,y)\Big)\Big\}\nonumber\\
  &&= \frac{m+1}{2}(E_{1}+E_{2})
  +\sum^{m/2}_{j=2}(C_{m-1,j-1}+C_{m-1,j})\Big(E^{j}_{3,1}+E_{3,2}^{j}\Big)
   +E_{4}+E_{5}.
\end{eqnarray}
\item{\rm (2)} If $m$ is odd with $m\geq 3$, there exists a sequence of coefficients $\{C_{m,j}\}_{m\geq 3,2\leq j\leq [m/2]}$ such that
\begin{eqnarray}\label{eq-2.3}
  &&t^{m}\frac{d^{m}}{dt^{m}}\Big\{\Big(K_{t}(x+u-y)-K^{\mathcal{L}}_{t}(x+u,y)\Big)-\Big(K_{t}(x-y)-K_{t}^{\mathcal{L}}(x,y)\Big)\Big\}\nonumber\\
   &&=\frac{m+1}{2}(E_{1}+E_{2})+ E_{4}-E_{5}
   +\sum^{[m/2]}_{j=2}(C_{m-1,j-1}+C_{m-1,j})\Big(E^{j}_{3,1}+E^{j}_{3,2}\Big)\nonumber\\
   &&+2C_{m,[m/2]}\frac{d^{[m/2]}}{dt^{[m/2]}}\Big(K_{t/2}(w-(x+u))-K_{t/2}(w-x)\Big)V(w)\frac{d^{[m/2]}}{dt^{[m/2]}}K^{\mathcal{L}}_{t/2}(w,y).
  \end{eqnarray}
Here in the above  (\ref{eq-2.2})\ and \ (\ref{eq-2.3}),
\begin{equation*}
\left\{  \begin{aligned}
E_{1}:&=\int_{\mathbb{R}^{n}}t\Big\{t^{m-1}\frac{d^{m-1}}{dt^{m-1}}\Big(K_{t/2}(w-x-u)-K_{t/2}(w-x)\Big)\Big\}V(w)K^{\mathcal{L}}_{t/2}(w,y)dw;\\
E_{2}:&=\int_{\mathbb{R}^{n}}t\Big(K_{t/2}(w-x-u))-K_{t/2}(w-x)\Big)V(w)\Big(t^{m-1}\frac{d^{m-1}}{dt^{m-1}}K^{\mathcal{L}}_{t/2}(w,y)\Big)dw;\\
E_{3,1}^{j}:&=\int_{\mathbb{R}^{n}}
t\Big\{t^{m-j}\frac{d^{m-j}}{dt^{m-j}}\Big(K_{t/2}(w-x-u)-K_{t/2}(w-x)\Big)\Big\}V(w)\Big(t^{j-1}\frac{d^{j-1}}{dt^{j-1}}K^{\mathcal{L}}_{t/2}(w,y)\Big)dw;\\
E_{3,2}^{j}:&=\int_{\mathbb{R}^{n}}
t\Big\{t^{j-1}\frac{d^{j-1}}{dt^{j-1}}\Big(K_{t/2}(w-x-u)-K_{t/2}(w-x)\Big)\Big\}V(w)\Big(t^{m-j}\frac{d^{m-j}}{dt^{m-j}}K^{\mathcal{L}}_{t/2}(w,y)\Big)dw;\\
E_{4}:&=\int^{t/2}_{0}\int_{\mathbb{R}^{n}}\Big(t^{m}\frac{d^{m}}{dt^{m}}(K_{t-s}(w-x-u)-K_{t-s}(w-x)\Big)V(w)K^{\mathcal{L}}_{s}(w,y)dwds;\\
E_{5}:&=\int^{t/2}_{0}\int_{\mathbb{R}^{n}}\Big(K_{s}(w-x-u)-K_{s}(w,x)\Big)V(w)\Big(t^{m}\frac{d^{m}}{dt^{m}}K^{\mathcal{L}}_{t-s}(w,y)\Big)dwds.
\end{aligned}\right.
\end{equation*}

Below, for the sake of simplicity, we only estimate $E_{1}, E_{4}, E_{5}$. The estimations for $E_{2}$, $E_{3,1}^{j}$, $E^{j}_{3,2}$ are similar, and so we omit the details. By the mean value theorem, we know that there exist constants $C, c$ such that
\begin{equation}\label{24}
\Big|Q_{t,m}(x+u)-Q_{t,m}(x)\Big|\leq
\left\{\begin{aligned}
&C|u|t^{-(n+1)/2},&\quad \forall\   x, u \in\mathbb R^{n},\  t\in(0,\infty);\\
&C|u|t^{-(n+1)/2}e^{-c|x|^{2}/t},&\quad |u|\leq |x|/2,\  t>0.
\end{aligned}\right.
\end{equation}
We divide $E_{1}$ as $E_{1}\leq E_{1,1}+E_{1,2}$, where
\begin{equation*}
 \left\{ \begin{aligned}
 E_{1,1}&:=t\int_{|w-x|<2u}\Big|Q_{t/2,m-1}(w-(x+u))-Q_{t/2,m-1}(w-x)\Big|V(w)K^{\mathcal{L}}_{t/2}(w,y)dw;\\
 E_{1,2}&:=t\int_{|w-x|\geq2u}\Big|Q_{t/2,m-1}(w-(x+u))-Q_{t/2,m-1}(w-x)\Big|V(w)K^{\mathcal{L}}_{t/2}(w,y)dw.
  \end{aligned}\right.
\end{equation*}
If $t<2\rho(y)^{2}$, for $E_{1,1}$, By (\ref{24}), Lemma \ref{lem2.6} (i) and Lemma \ref{pro2.2}, we obtain
\begin{eqnarray*}
  E_{1,1} &\leq& Ct^{1-n/2}\int_{|w-x|<2|u|}|u|t^{-(n+1)/2}V(w)e^{-c|w-y|^{2}/t}dw  \\
   &\leq& Ct^{1-n/2}|u|\frac{1}{t^{3/2}}\Big(\frac{\sqrt{t}}{\rho(y)}\Big)^{\delta_{0}} \\
   &=&Ct^{-n/2}\frac{|u|}{\sqrt{t}}\Big(\frac{\sqrt{t}}{\rho(y)}\Big)^{\delta_{0}}\leq Ct^{-n/2}\Big(\frac{|u|}{\rho(y)}\Big)^{\delta}.
    \end{eqnarray*}
Similar to $E_{1,1}$, by (\ref{24}) again, we get
\begin{eqnarray*}
  E_{1,2} &\leq&Ct^{1-n/2}\int_{|w-x|\geq 2|u|}|u|t^{-(n+1)/2}e^{-c|w-y|^{2}/t}V(w)dw  \\
   &\leq&Ct^{1-n/2}\frac{|u|}{t^{3/2}}\Big(\frac{\sqrt{t}}{\rho(y)}\Big)^{\delta_{0}}  \\
   &\leq&Ct^{-n/2}\Big(\frac{|u|}{\rho(y)}\Big)^{\delta}.
\end{eqnarray*}
If $t>2\rho(y)^{2}$, for $E_{1,1}$, by (\ref{24}), Lemma \ref{lem2.6} (ii) and Lemma \ref{pro2.2}, we obtain
\begin{eqnarray*}
  E_{1,1} &\leq& Ct^{1-n/2}\Big(1+\frac{\sqrt{t}}{\rho(y)}\Big)^{-N}\int_{|w-x|<2|u|}|u|t^{-(n+1)/2}V(w)e^{-c|w-y|^{2}/t}dw  \\
   &\leq& Ct^{1-n/2}\Big(1+\frac{\sqrt{t}}{\rho(y)}\Big)^{-N}|u|\frac{1}{t^{3/2}}\Big(\frac{\sqrt{t}}{\rho(y)}\Big)^{l_{0}} \\
   &\leq& Ct^{-n/2}\Big(\frac{|u|}{\rho(y)}\Big)^{\delta}\Big(1+\frac{\sqrt{t}}{\rho(y)}\Big)^{-N+l_{0}}.
   \end{eqnarray*}
Similar to $E_{1,1}$, we can also choose $N$ large enough such that
$$E_{1,2}\leq Ct^{-n/2}\Big(\frac{|u|}{\rho(y)}\Big)^{\delta}\Big(1+\frac{\sqrt{t}}{\rho(y)}\Big)^{-N+l_{0}}.$$

For $E_{4}$ and $E_{5}$, similar to \cite[Lemma 10]{LL}, by Lemma \ref{pro2.2} and  (\ref{24}), we can obtain
\begin{equation}\label{23}
  |E_{4}+E_{5}|\leq C_{\epsilon}t^{-n/2}e^{\epsilon|x-y|^{2}/t}\Big(\frac{|u|}{\rho(y)}\Big)^{\delta},
\end{equation}
where $\epsilon>0$ is an arbitrary small constant. Hence Proposition \ref{pro2.5} is proved.
\end{proof}

\subsection{Fractional heat kernels associated with $\mathcal{L}$}\label{sec-2.2}

In the following, we will derive some regularity estimates for the fractional heat kernels related with $\mathcal{L}$.  For $\alpha\in(0,1)$, the fractional power of $\mathcal{L}$, denoted by $\mathcal{L}^{\alpha}$, is defined as
\begin{equation}\label{7}
  \mathcal{L}^{\alpha}:=\frac{1}{\Gamma(-\alpha)}\int^{\infty}_{0}\Big(e^{-t\sqrt{\mathcal{L}}}f(x)-f(x)\Big)\frac{dt}{t^{1+2\alpha}}\quad   \forall\ f\in L^{2}(\mathbb{R}^{n}).
\end{equation}
We use the subordinative formula to express the integral kernel $K^{\mathcal{L}}_{\alpha,t}(\cdot,\cdot)$ of $e^{-t\mathcal{L}^{\alpha}}$ as (cf. \cite{gri})
\begin{equation*}
  K^{\mathcal{L}}_{\alpha,t}(x,y)=\int^{\infty}_{0}\eta^{\alpha}_{t}(s)K^{\mathcal{L}}_{s}(x,y)ds,
\end{equation*}
where $\eta^{\alpha}_{t}(\cdot)$ satisfies
\begin{equation}\label{eq-2.1}
 \left\{\begin{aligned}
 &\eta^{\alpha}_{t}(s)=1/t^{1/\alpha}\eta_{1}^{\alpha}(s/t^{1/\alpha});\\
 &\eta^{\alpha}_{t}(s)\leq t/s^{1+\alpha}\quad   \forall\  s,t>0;\\
 &\int^{\infty}_{0}s^{-r}\eta^{\alpha}_{1}(s)ds<\infty, \ \ r>0;\\
 &\eta^{\alpha}_{t}(s)\simeq t/s^{1+\alpha}\ \ \forall\  s\geq t^{1/\alpha}>0.
 \end{aligned}\right.
\end{equation}
By the subordinative formula (\ref{p-1}) and Lemma \ref{pro2.2}, Li et al. in \cite{Li2} proved the following estimates for $K^{\mathcal{L}}_{\alpha,t}(\cdot,\cdot)$.
\begin{proposition}\label{pro2.51}{\rm \cite[Propositions 3.1\ and \ 3.2]{Li2}}
Let $0<\alpha<1$.
\item{\rm (i)} For any $N>0$, there exists a constant $C_{N}>0$ such that
$$\Big|K^{\mathcal L}_{\alpha,t}(x,y)\Big|\leq \frac{C_{N}t}{(\sqrt{t^{1/\alpha}}+|x-y|)^{n+2\alpha}}\Big(1+\frac{\sqrt{t^{1/\alpha}}}{\rho(x)}+\frac{\sqrt{t^{1/\alpha}}}{\rho(y)}\Big)^{-N}.$$
\item{\rm (ii)} Let $0<\delta\leq  \min\{1,\delta_{0}\}$. For any $N>0$, there exists a constant $C_{N}>0$ such that for all $|h|\leq \sqrt{t^{1/\alpha}}$,
$$\Big|K^{\mathcal L}_{\alpha,t}(x+h,y)-K^{\mathcal L}_{\alpha,t}(x,y)\Big|\leq C_{N}\Big(\frac{|h|}{\sqrt{t^{1/\alpha}}}\Big)^{\delta}\frac{t}{(\sqrt{t^{1/\alpha}}+|x-y|)^{n+2\alpha}}
\Big(1+\frac{\sqrt{t^{1/\alpha}}}{\rho(x}+
    \frac{\sqrt{t^{1/\alpha}}}{\rho(y)}\Big)^{-N}.$$
\end{proposition}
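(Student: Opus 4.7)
The plan is to insert the pointwise (resp.\ H\"older) bounds for the heat kernel supplied by Lemma \ref{pro2.2} into the subordinative formula (\ref{p-1}) and then evaluate the resulting integral against $\eta^{\alpha}_{t}(s)\,ds$, using the size estimate $\eta^{\alpha}_{t}(s)\leq t/s^{1+\alpha}$ from (\ref{eq-2.1}) as the only property of the subordinator that is really needed.

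For part (i), plugging Lemma \ref{pro2.2}(i) into (\ref{p-1}) reduces the problem to bounding
$$I(x,y,t):=\int_{0}^{\infty}\frac{t}{s^{1+\alpha}}\,s^{-n/2}e^{-c|x-y|^{2}/s}\Bigl(1+\frac{\sqrt{s}}{\rho(x)}+\frac{\sqrt{s}}{\rho(y)}\Bigr)^{-N'}ds$$
for some sufficiently large $N'>N$. I would split this integral at $s=t^{1/\alpha}$. On the tail $\{s\geq t^{1/\alpha}\}$, monotonicity gives $(1+\sqrt{s}/\rho(x))^{-N'}\leq (1+\sqrt{t^{1/\alpha}}/\rho(x))^{-N'}$, and a standard gamma-function type change of variable shows that $\int_{t^{1/\alpha}}^{\infty}ts^{-1-\alpha-n/2}e^{-c|x-y|^{2}/s}\,ds$ is comparable to $t/(\sqrt{t^{1/\alpha}}+|x-y|)^{n+2\alpha}$. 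On the range $\{s<t^{1/\alpha}\}$, I would invoke the elementary inequality
$$1+\frac{\sqrt{t^{1/\alpha}}}{\rho(x)}\leq \Bigl(\frac{t^{1/\alpha}}{s}\Bigr)^{1/2}\Bigl(1+\frac{\sqrt{s}}{\rho(x)}\Bigr),$$
which transfers the desired $\rho$-decay from scale $\sqrt{s}$ to scale $\sqrt{t^{1/\alpha}}$ at the cost of a power $(t^{1/\alpha}/s)^{N/2}$; choosing $N'=N+M$ with $M$ large absorbs this power and leaves enough convergence in $s$ to integrate against $ts^{-1-\alpha-n/2}e^{-c|x-y|^{2}/s}$. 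The two pieces then assemble into the claimed bound.

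For part (ii), I would split the $s$-integration according to whether $|h|\leq\sqrt{s}$ or $|h|>\sqrt{s}$. On $\{s\geq|h|^{2}\}$ the H\"older bound of Lemma \ref{pro2.2}(ii) produces the factor $(|h|/\sqrt{s})^{\delta}$; the remaining integral is estimated exactly as in (i), with an extra $(|h|/\sqrt{s})^{\delta}\leq (|h|/\sqrt{t^{1/\alpha}})^{\delta}(t^{1/\alpha}/s)^{\delta/2}$ in the integrand, which is harmless because we have a reserve of convergence in $s$. On $\{s<|h|^{2}\}$, which is only nonempty when $|h|^{2}\leq t^{1/\alpha}$, I would instead use the triangle inequality combined with Lemma \ref{pro2.2}(i) applied to each of the two kernels separately, and then trivially enlarge $1\leq (|h|/\sqrt{s})^{\delta}$ to recover the desired factor $(|h|/\sqrt{t^{1/\alpha}})^{\delta}$ after integration, since in this regime $s<|h|^{2}\leq t^{1/\alpha}$ so the $\rho$-shifting inequality of part (i) is still available.

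The main obstacle, and the step where subordination has to be used nontrivially, is the short-$s$ regime. There the weight $(1+\sqrt{s}/\rho(x))^{-N}$ given by Lemma \ref{pro2.2} is \emph{weaker} than the target weight $(1+\sqrt{t^{1/\alpha}}/\rho(x))^{-N}$, since $s<t^{1/\alpha}$. One must therefore trade part of the exponent $N'$ against the size factor $(t^{1/\alpha}/s)^{N/2}$ produced by the elementary inequality above, and simultaneously use Lemma \ref{lem2.5}(i) to interchange $\rho(x)$ and $\rho(y)$ wherever it becomes necessary. Once this bookkeeping is done, the remaining computation reduces to elementary gamma-type integrals in $s$.
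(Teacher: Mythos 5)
There is a genuine gap, and it sits exactly where you claim the argument is routine. First, note that the paper never proves Proposition \ref{pro2.51}: it is quoted from \cite{Li2}, so the only internal points of comparison are the analogous subordination computations in Propositions \ref{pro1}--\ref{pro2-2}. Those computations use \emph{two} properties of the subordinator from (\ref{eq-2.1}): the crude bound $\eta^{\alpha}_{t}(s)\leq t/s^{1+\alpha}$ (effective when the Gaussian factor supplies decay in $s$ near $0$, i.e.\ when $|x-y|\gtrsim\sqrt{t^{1/\alpha}}$), and the scaling identity $\eta^{\alpha}_{t}(s)=t^{-1/\alpha}\eta^{\alpha}_{1}(s/t^{1/\alpha})$ combined with the finiteness of all negative moments $\int_{0}^{\infty}u^{-r}\eta^{\alpha}_{1}(u)\,du<\infty$, which encodes the rapid vanishing of $\eta^{\alpha}_{1}$ at $0^{+}$. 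Your declaration that $\eta^{\alpha}_{t}(s)\leq t/s^{1+\alpha}$ is ``the only property of the subordinator that is really needed'' is false, and the failure is visible already in part (i): on the diagonal $x=y$ the bound you write down on $\{s<t^{1/\alpha}\}$ is $\int_{0}^{t^{1/\alpha}}t\,s^{-1-\alpha-n/2}\,ds=+\infty$, and for $0<|x-y|\ll\sqrt{t^{1/\alpha}}$ your weight-transfer factor $(t^{1/\alpha}/s)^{N/2}$ turns the short-$s$ integral into something of size $t\,|x-y|^{-n-2\alpha-N}$, vastly larger than the target $t^{-n/(2\alpha)}$. The proposed fix ``choose $N'=N+M$ with $M$ large'' does nothing: the surplus factor $\bigl(1+\sqrt{s}/\rho(x)+\sqrt{s}/\rho(y)\bigr)^{-M}\leq 1$ is independent of $t$ and carries no compensating power of $s/t^{1/\alpha}$, so it cannot absorb $(t^{1/\alpha}/s)^{N/2}$. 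The same defect infects part (ii): on $\{|h|^{2}\leq s<t^{1/\alpha}\}$ the extra factor $(t^{1/\alpha}/s)^{\delta/2}$ is not ``harmless,'' and on $\{s<|h|^{2}\}$ the enlargement $1\leq(|h|/\sqrt{s})^{\delta}$ again produces a negative power of $s$ that nothing in your scheme controls near $s=0$ when $|x-y|\lesssim\sqrt{t^{1/\alpha}}$.

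The repair is the standard one and is exactly what the paper does in Propositions \ref{pro1} and \ref{pro2}: split at $|x-y|$ versus $\sqrt{t^{1/\alpha}}$ (or, equivalently, treat the short-$s$ range separately), use $\eta^{\alpha}_{t}(s)\leq t/s^{1+\alpha}$ together with the Gaussian only where $e^{-c|x-y|^{2}/s}$ supplies convergence at $s=0$, and in the remaining regime substitute $u=s/t^{1/\alpha}$ and invoke $\int_{0}^{\infty}u^{-r}\eta^{\alpha}_{1}(u)\,du<\infty$; after this substitution your transfer factor becomes $u^{-N/2}$ and is integrated harmlessly against $\eta^{\alpha}_{1}(u)$. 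Your tail estimate on $\{s\geq t^{1/\alpha}\}$, the overall splittings, the weight-transfer inequality itself, and the use of Lemma \ref{pro2.2}(ii) (with Lemma \ref{lem2.5} to pass between $\rho(x)$ and $\rho(y)$) are all sound; only the short-$s$ regime must be redone with the negative-moment property, after which the outline closes.
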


For the kernels $\widetilde{D}^{\mathcal L}_{\alpha,t}(\cdot,\cdot)$ and $D^{\mathcal{L},m}_{\alpha,t}(\cdot,\cdot), m\in\mathbb{Z}_{+}, t>0$, defined by (\ref{eq-1.3}),
the following regularity estimates were obtained by  Li et al. in \cite{Li2}.
\begin{proposition}\label{pro3.12}{\rm (\cite[Proposition 3.3]{Li2})} Let $0<\alpha<1$.
\item{\rm (i)} For every $N$, there is a constant $C_{N}>0$ such that
$$\Big|D_{\alpha,t}^{\mathcal{L},m}(x,y)\Big|\leq \frac{C_{N}t}{(\sqrt{t^{1/\alpha}}+|x-y|)^{n+2\alpha }}\Big(1+\frac{\sqrt{t^{1/\alpha}}}{\rho(x)}+\frac{\sqrt{t^{1/\alpha}}}{\rho(y)}\Big)^{-N}.$$
\item{\rm (ii)} Let $0<\delta<\min\{2\alpha,\delta_{0},1\}$. For every $N>0$, there exists a constant $C_{N}>0$ such that, for all $|h|<\sqrt{t^{1/\alpha}}$,
$$\Big|D^{\mathcal{L},m}_{\alpha,t}(x+h,y)-D^{\mathcal{L},m}_{\alpha,t}(x,y)\Big|\leq C_{N}\Big(\frac{h}{\sqrt{t^{1/\alpha}}}\Big)^{\delta}\frac{t}{(\sqrt{t^{1/\alpha}}+|x-y|)^{n+2\alpha }}
\Big(1+\frac{\sqrt{t^{1/\alpha}}}{\rho(x)}+\frac{\sqrt{t^{1/\alpha}}}{\rho(y)}\Big)^{-N}.$$
\item{\rm (iii)} There exists a constant $C_{N}>0$ such that
$$\Big|\int_{\mathbb{R}^{n}}D^{\mathcal{L},m}_{\alpha,t}(x,y)dy\Big|\leq \frac{C_{N}(\sqrt{t^{1/\alpha}}/\rho(x))^{\delta}}{(1+\sqrt{t^{1/\alpha}}/\rho(x))^{N}}.$$
\end{proposition}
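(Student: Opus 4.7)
My plan is to exploit the subordinative formula $K^{\mathcal{L}}_{\alpha,t}(x,y)=\int_0^\infty \eta_t^\alpha(s)K^{\mathcal{L}}_s(x,y)\,ds$, differentiate $m$ times in $t$ under the integral sign, and combine the result with the pointwise and H\"older bounds for $K^{\mathcal{L}}_s$ from Lemma \ref{pro2.2}. From the scaling identity $\eta_t^\alpha(s)=t^{-1/\alpha}\eta_1^\alpha(s/t^{1/\alpha})$ listed in (\ref{eq-2.1}), a direct induction on $m$ yields $t^m\partial_t^m\eta_t^\alpha(s)=t^{-1/\alpha}\Phi_m(s/t^{1/\alpha})$, where $\Phi_m$ is a fixed linear combination of $u^j(\eta_1^\alpha)^{(j)}(u)$ for $0\le j\le m$. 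The rapid decay of $\eta_1^\alpha$ near $0$ together with the tail behavior $\eta_1^\alpha(u)\simeq u^{-1-\alpha}$ for $u\ge 1$ propagate to $\Phi_m$, yielding $|\Phi_m(u)|\lesssim\min\{1,u^{-1-\alpha}\}$ and $\int_0^\infty u^{-r}|\Phi_m(u)|\,du<\infty$ for every $r>0$. This gives the master formula
$$D^{\mathcal{L},m}_{\alpha,t}(x,y)=t^{-1/\alpha}\int_0^\infty \Phi_m(s/t^{1/\alpha})K^{\mathcal{L}}_s(x,y)\,ds$$
that drives all three parts.

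For part (i), substituting $u=s/t^{1/\alpha}$, inserting the bound of Lemma \ref{pro2.2}(i) and splitting the integral at $u\sim 1$ reduces the argument to an elementary computation whose structure is the same as the $m=0$ case already carried out in Proposition \ref{pro2.51}(i), with $\eta_1^\alpha$ replaced by $\Phi_m$; the $\rho$-decay factor is preserved because it sits inside $K^{\mathcal{L}}_s$. For part (ii), given $|h|<\sqrt{t^{1/\alpha}}$, I would split the $s$-integration at $s=|h|^2$: on $s\ge|h|^2$, Lemma \ref{pro2.2}(ii) supplies the H\"older gain $(|h|/\sqrt{s})^{\delta}$ inside the integral, and a computation along the same lines as in part (i) produces the desired bound; on $s<|h|^2$, Lemma \ref{pro2.2}(ii) no longer applies, so I estimate the two terms separately via part (i) and trade the smallness of the integration region for the factor $(|h|/\sqrt{t^{1/\alpha}})^{\delta}$. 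The restriction $\delta<\min\{2\alpha,\delta_0,1\}$ then emerges from balancing the intrinsic H\"older exponent of $K^{\mathcal{L}}_s$ (forcing $\delta\le\delta_0$ and $\delta\le 1$) against convergence of the $s$-integral at infinity, where the $u^{-1-\alpha}$ tail of $\Phi_m$ forces $\delta<2\alpha$.

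For part (iii), I would split according to whether $\sqrt{t^{1/\alpha}}\le\rho(x)$ or not. When $\sqrt{t^{1/\alpha}}>\rho(x)$, a direct application of part (i) followed by integration in $y$ gives control by a power of $(1+\sqrt{t^{1/\alpha}}/\rho(x))^{-N}$, which already dominates the required bound. When $\sqrt{t^{1/\alpha}}\le\rho(x)$, the key observation is that for $m\ge 1$ the classical fractional heat kernel from (\ref{1.4}) satisfies $\int_{\mathbb R^n}K_{\alpha,t}(x-y)\,dy=1$, hence $\int_{\mathbb R^n}t^m\partial_t^m K_{\alpha,t}(x-y)\,dy=0$, so
$$\int_{\mathbb R^n}D^{\mathcal{L},m}_{\alpha,t}(x,y)\,dy=t^{-1/\alpha}\int_0^\infty \Phi_m(s/t^{1/\alpha})\int_{\mathbb R^n}\bigl[K^{\mathcal{L}}_s(x,y)-K_s(x-y)\bigr]dy\,ds.$$
Applying Lemma \ref{lem1} to the inner integrand and integrating in $y$ produces the factor $(\sqrt{s}/\rho(x))^{\delta_0}$, and the remaining $s$-integral, controlled via the decay bounds on $\Phi_m$, then yields the desired $(\sqrt{t^{1/\alpha}}/\rho(x))^{\delta}$.

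The main technical obstacle I anticipate is the low-$s$ regime in part (ii): here one must trade the H\"older gain against the Gaussian decay of $K^{\mathcal{L}}_s$ without direct access to Lemma \ref{pro2.2}(ii), and simultaneously preserve the $\rho$-decay factor. This is precisely the point at which the admissible range of $\delta$ gets pinned down, and a closely related splitting reappears at the $s\sim\rho(x)^2$ crossover inside part (iii).
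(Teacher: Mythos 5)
First, a point of comparison: the paper does not prove Proposition \ref{pro3.12} at all --- it is quoted from \cite[Proposition 3.3]{Li2} --- so the only benchmark is the subordination-based machinery used elsewhere in this paper (Propositions \ref{pro1}--\ref{pro2-2}) and in the cited source. Your overall strategy is the right one: push everything through the subordinative formula (\ref{p-1}) and Lemma \ref{pro2.2}, and for (iii) exploit the cancellation $\int_{\mathbb R^n}t^m\partial_t^m K_{\alpha,t}(x-y)\,dy=0$ together with Lemma \ref{lem1}; your part (iii) computation is also where the restriction $\delta<\min\{2\alpha,\delta_0\}$ genuinely arises (the $u^{-1-\alpha}$ tail caps the attainable gain at $2\alpha$), which is correct.

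The genuine gap is the master formula itself. You differentiate $\eta^\alpha_t(s)$ in $t$ and assert $t^m\partial_t^m\eta_t^\alpha(s)=t^{-1/\alpha}\Phi_m(s/t^{1/\alpha})$ with $|\Phi_m(u)|\lesssim\min\{1,u^{-1-\alpha}\}$ and all negative moments finite. The scaling identity is fine, but those bounds require pointwise control of the derivatives $(\eta^\alpha_1)^{(j)}(u)$, $1\le j\le m$, near $0$ and at infinity, and none of this follows from the four properties listed in (\ref{eq-2.1}), which concern $\eta^\alpha_1$ itself only; the derivative asymptotics of the one-sided stable density are true but nontrivial, and as written all of (i)--(iii) rest on this unproved assertion. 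The standard way to avoid it --- and the one consistent with how the paper handles $t$-derivatives elsewhere (cf. the kernels $Q^{\mathcal{L}}_{t,m}$ in Proposition \ref{pro2.5} and the remark proving Proposition \ref{pro1-1-1}) --- is to substitute $s=t^{1/\alpha}u$ \emph{before} differentiating, so that $t^m\partial_t^m$ falls on $K^{\mathcal{L}}_{t^{1/\alpha}u}(x,y)$ and produces combinations of $\bigl(s^{j}\partial_s^{j}K^{\mathcal{L}}_{s}\bigr)(x,y)\big|_{s=t^{1/\alpha}u}$; the Gaussian bounds with $\rho$-decay for these time derivatives, and their H\"older analogues, are available (they underlie Proposition \ref{pro2.5} and are used in \cite{Li2,wyx,Zhang}), after which your computations for (i)--(iii) go through essentially unchanged with $\eta^\alpha_1$ in place of $\Phi_m$. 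Two smaller points: in (i) the $\rho$-decay factor is not automatically ``preserved'' --- for $s\ll t^{1/\alpha}$ one must convert $(1+\sqrt{s}/\rho)^{-N}$ into $(1+\sqrt{t^{1/\alpha}}/\rho)^{-N}$ at the cost of a factor $(t^{1/\alpha}/s)^{N/2}$ absorbed by the negative moments, and this should be said explicitly; and in (ii) the low-$s$ region must also retain the factor $(\sqrt{t^{1/\alpha}}+|x-y|)^{-n-2\alpha}$, which requires trading the Gaussian for a power $(s/|x-y|^{2})^{M}$ when $|x-y|\ge\sqrt{t^{1/\alpha}}$ rather than only ``trading the smallness of the integration region.''
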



\begin{proposition}\label{pro3.13}{\rm (\cite[Propositions 3.6, \ 3.9\ and\ 3.10]{Li2})}     Suppose that  and $V\in B_{q}$ for some $q>n$.
\item{\rm (i)} Let $\alpha\in(0,1)$. For every $N$, there is a constant $C_{N}>0$ such that
$$\Big|\widetilde{D}_{\alpha,t}^{\mathcal{L}}(x,y)\Big|\leq \frac{C_{N}t}{(\sqrt{t^{1/\alpha}}+|x-y|)^{n+2\alpha }}\Big(1+\frac{\sqrt{t^{1/\alpha}}}{\rho(x)}+\frac{\sqrt{t^{1/\alpha}}}{\rho(y)}\Big)^{-N}.$$
\item{\rm (ii)} Let $\alpha\in(0,1)$ and $\delta_{1}=1-n/q$. For every $N>0$, there exists a constant $C_{N}>0$ such that for all $|h|<\sqrt{t^{1/\alpha}}$,
$$\Big|\widetilde{D}^{\mathcal{L}}_{\alpha,t}(x+h,y)-\widetilde{D}^{\mathcal{L}}_{\alpha,t}(x,y)\Big|\leq C_{N}\Big(\frac{h}{\sqrt{t^{1/\alpha}}}\Big)^{\delta_{1}}\frac{t}{(\sqrt{t^{1/\alpha}}+|x-y|)^{n+2\alpha }}
\Big(1+\frac{\sqrt{t^{1/\alpha}}}{\rho(x)}+\frac{\sqrt{t^{1/\alpha}}}{\rho(y)}\Big)^{-N}.$$
\item{\rm (iii)} Let $\alpha\in (0, 1/2-n/2q)$. There exists a constant $C_{N}>0$ such that
$$\Big|\int_{\mathbb{R}^{n}}\widetilde{D}^{\mathcal{L}}_{\alpha,t}(x,y)dy\Big|\leq C_{N}\min\Bigg\{\Big(\frac{\sqrt{t^{1/\alpha}}}{\rho(x)}\Big)^{1+2\alpha},\
 \Big(\frac{\sqrt{t^{1/\alpha}}}{\rho(x)}\Big)^{-N}\Bigg\}.$$
\end{proposition}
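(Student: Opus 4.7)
My approach is to differentiate the subordinative formula $K^{\mathcal{L}}_{\alpha,t}(x,y)=\int_{0}^{\infty}\eta^{\alpha}_{t}(s)K^{\mathcal{L}}_{s}(x,y)\,ds$ in $x$ and reduce each of the three bounds to the corresponding estimate for $\nabla_{x}K^{\mathcal{L}}_{s}(x,y)$ from Section~\ref{sec-2.1}, then control the resulting $s$-integral using the scaling properties (\ref{eq-2.1}) of $\eta^{\alpha}_{t}$. In every case the $\rho$-decay factor $(1+\sqrt{t^{1/\alpha}}/\rho(x)+\sqrt{t^{1/\alpha}}/\rho(y))^{-N}$ is inherited from the analogous factor in Lemma~\ref{le3.8}, modulo the standard trick of absorbing extra $(\sqrt{s}/\rho)^{\delta}$ factors at the price of decreasing $N$.

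For (i), I insert the pointwise bound of Lemma~\ref{le3.8} and split at $s=t^{1/\alpha}$: on the small-$s$ piece use $\eta^{\alpha}_{t}(s)\leq t/s^{1+\alpha}$ and trade part of $e^{-c|x-y|^{2}/s}$ for polynomial decay in $|x-y|/\sqrt{s}$, producing the factor $t/(\sqrt{t^{1/\alpha}}+|x-y|)^{n+2\alpha}$; on the large-$s$ piece use the asymptotic $\eta^{\alpha}_{t}(s)\sim t/s^{1+\alpha}$ directly. For (ii), I would first establish a $\delta_{1}$-H\"older estimate in $x$ for $\nabla_{x}K^{\mathcal{L}}_{s}$ by adding and subtracting $\nabla_{x}K_{s}$: the ``difference of differences'' is controlled by Lemma~\ref{lem2-2}, and the classical term by the Hessian bound $|\nabla_{x}^{2}K_{s}|\leq Cs^{-(n+2)/2}e^{-c|x|^{2}/s}$, with the $\rho(y)^{-\delta_{1}}$ factor from Lemma~\ref{lem2-2} rewritten as $(|h|/\sqrt{s})^{\delta_{1}}(\sqrt{s}/\rho(y))^{\delta_{1}}$ and absorbed. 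The subordinative integral is then split at $s=|h|^{2}$, using the H\"older bound above that threshold and the triangle inequality plus Lemma~\ref{le3.8} below it, yielding the factor $(|h|/\sqrt{t^{1/\alpha}})^{\delta_{1}}$.

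Part (iii) is the principal obstacle, and drives both the appearance of the condition $\alpha<1/2-n/(2q)$ and the exponent $1+2\alpha$. The key identity is
$$\int_{\mathbb{R}^{n}}\widetilde{D}^{\mathcal{L}}_{\alpha,t}(x,y)\,dy=-t^{1/2\alpha}\int_{0}^{\infty}\eta^{\alpha}_{t}(s)\int_{\mathbb{R}^{n}}\nabla_{x}\bigl(K_{s}(x-y)-K^{\mathcal{L}}_{s}(x,y)\bigr)\,dy\,ds,$$
obtained by using $\int K_{s}(x-y)\,dy=1$ and exchanging $\nabla_{x}$ with both integrals. Lemma~\ref{lem1-1} together with the $L^{1}$-bound supplied by Lemma~\ref{le3.8} then yields
$$\Bigl|\int_{\mathbb{R}^{n}}\nabla_{x}\bigl(K_{s}(x-y)-K^{\mathcal{L}}_{s}(x,y)\bigr)\,dy\Bigr|\leq Cs^{-1/2}\min\bigl\{(\sqrt{s}/\rho(x))^{\delta_{0}},\,1\bigr\}.$$
Inserting this bound and splitting the $s$-integral at $s=\rho(x)^{2}$: the small-$s$ piece requires $\delta_{0}>1+2\alpha$ for convergence at the origin, which is exactly the condition $\alpha<1/2-n/(2q)$; the tail, where only the trivial size $1$ is available together with $\eta^{\alpha}_{t}(s)\sim t/s^{1+\alpha}$, produces $(\sqrt{t^{1/\alpha}}/\rho(x))^{1+2\alpha}$ in the regime $\sqrt{t^{1/\alpha}}\leq\rho(x)$. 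The delicate point is recognizing that the tail dominates over the interior precisely because $1+2\alpha<\delta_{0}$, which forces the stated exponent; for $\sqrt{t^{1/\alpha}}>\rho(x)$ one instead exploits the full $(1+\sqrt{s}/\rho(x))^{-N}$ decay from Lemma~\ref{le3.8} to obtain the arbitrary negative-power bound.
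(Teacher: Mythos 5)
First, note that the paper does not prove this proposition at all: it is imported verbatim from \cite[Propositions 3.6, 3.9, 3.10]{Li2}, so there is no in-paper argument to compare with. Your strategy — differentiate the subordination formula (\ref{p-1}) and feed in the heat-kernel estimates of Section \ref{sec-2.1} — is exactly the mechanism the paper itself uses for Propositions \ref{pro1}--\ref{pro2-2}, and your sketches of (i) and (iii) are essentially sound. In particular, for (iii) your cancellation identity $\int_{\mathbb{R}^{n}}\nabla_{x}K_{s}(x-y)\,dy=0$, the bound $Cs^{-1/2}\min\{(\sqrt{s}/\rho(x))^{\delta_{0}},(1+\sqrt{s}/\rho(x))^{-N}\}$ from Lemmas \ref{lem1-1} and \ref{le3.8}, and the identification of $\alpha<1/2-n/(2q)$ with $1+2\alpha<\delta_{0}$ (the exponent $1+2\alpha$ then coming from the tail $\eta^{\alpha}_{t}(s)\simeq t/s^{1+\alpha}$) all check out. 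One small imprecision: in (i), on the small-$s$ piece with $|x-y|\lesssim\sqrt{t^{1/\alpha}}$ the crude bound $\eta^{\alpha}_{t}(s)\leq t/s^{1+\alpha}$ produces a divergent integral at $s=0$; there you must use the scaling $\eta^{\alpha}_{t}(s)=t^{-1/\alpha}\eta^{\alpha}_{1}(s/t^{1/\alpha})$ together with the finite negative moments of $\eta^{\alpha}_{1}$ from (\ref{eq-2.1}), as in the proof of Proposition \ref{pro1}.

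The genuine gap is in (ii). The subordination argument needs a H\"older estimate for $\nabla_{x}K^{\mathcal{L}}_{s}(\cdot,y)$ of the form $(|h|/\sqrt{s})^{\delta_{1}}s^{-(n+1)/2}e^{-c|x-y|^{2}/s}(1+\sqrt{s}/\rho(x)+\sqrt{s}/\rho(y))^{-N}$ valid for all $|h|\leq\sqrt{s}$, and the ingredients you invoke do not supply it. Lemma \ref{lem2-2} is restricted to $|u|\leq\min\{|x-y|/4,C\rho(x)\}$, so it says nothing in the near-diagonal regime $|x-y|\leq 4|h|$; there the required gain $(|h|/\sqrt{t^{1/\alpha}})^{\delta_{1}}$ cannot be recovered from the size bounds of Lemma \ref{le3.8} plus the triangle inequality (your treatment of $s<|h|^{2}$ works, but for $s\geq|h|^{2}$ with $x$ close to $y$ you have no smallness in $|h|$ at all), and the Hessian bound only controls the free part, not $\nabla_{x}(K_{s}-K^{\mathcal{L}}_{s})$. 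Moreover, even where Lemma \ref{lem2-2} applies it only yields exponents $\delta'<\delta_{1}$ and carries no $(1+\sqrt{s}/\rho)^{-N}$ factor; restoring that decay by interpolating with Lemma \ref{le3.8} degrades the exponent strictly below $\delta_{1}$, so at best you prove (ii) with some $\delta'<\delta_{1}$ rather than with $\delta_{1}$ as stated (this weaker form would still suffice for Theorem \ref{the3.2}, which only uses $\delta<\min\{2\alpha,\delta_{1},1\}$). What is missing is an independent H\"older estimate for the gradient of the Schr\"odinger heat kernel near the diagonal under $V\in B_{q}$, $q>n$ — this is precisely the input the cited reference \cite{Li2} (building on \cite{duo}/\cite{Kurata}-type estimates) provides and which cannot be synthesized from Lemmas \ref{le3.8}, \ref{lem1-1} and \ref{lem2-2} alone.
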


In order to establish the $BMO^{\gamma}_{L}$-boundedness of operators via $T1$ type theorem, we need the following propositions.
\begin{proposition}\label{pro1}
There exists a constant $C>0$ such that
\begin{equation*}
  \Big|K^{\mathcal{L}}_{\alpha,t}(x,y)-K_{\alpha,t}(x-y)\Big|\leq
  \left\{\begin{aligned}
  &C\Big(\frac{|x-y|}{\rho(x)}\Big)^{\delta_{0}}\frac{t}{(|x-y|+\sqrt{t^{1/\alpha}})^{n+2\alpha}},\ \ \ \sqrt{t^{1/\alpha}}\leq |x-y|;\\
  &C\Big(\frac{\sqrt{t^{1/\alpha}}}{\rho(x)}\Big)^{\delta_{0}}\frac{t}{(|x-y|+\sqrt{t^{1/\alpha}})^{n+2\alpha}},\ \ \ \sqrt{t^{1/\alpha}}\geq |x-y|.
  \end{aligned}\right.
\end{equation*}
\end{proposition}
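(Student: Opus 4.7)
The plan is to combine the subordinative formula (\ref{p-1}), which applies equally to $K_{\alpha,t}(x-y)$ (the case $V\equiv 0$) and to $K^{\mathcal{L}}_{\alpha,t}(x,y)$, to rewrite the difference as
\[
K^{\mathcal{L}}_{\alpha,t}(x,y)-K_{\alpha,t}(x-y)=\int_{0}^{\infty}\eta^{\alpha}_{t}(s)\bigl[K^{\mathcal{L}}_{s}(x,y)-K_{s}(x-y)\bigr]\,ds,
\]
and then to feed in the pointwise bound of Lemma \ref{lem1},
\[
\bigl|K^{\mathcal{L}}_{s}(x,y)-K_{s}(x-y)\bigr|\leq C\bigl(\sqrt{s}/\rho(x)\bigr)^{\delta_{0}}s^{-n/2}e^{-c|x-y|^{2}/s}.
\]
The task is then to estimate $\int_{0}^{\infty}\eta^{\alpha}_{t}(s)\,s^{(\delta_{0}-n)/2}e^{-c|x-y|^{2}/s}\,ds/\rho(x)^{\delta_{0}}$ in the two regimes governed by the comparison of $|x-y|$ with $\sqrt{t^{1/\alpha}}$.

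In the far regime $\sqrt{t^{1/\alpha}}\leq|x-y|$, I would use the pointwise bound $\eta^{\alpha}_{t}(s)\leq t/s^{1+\alpha}$ from (\ref{eq-2.1}) uniformly in $s$ and keep the Gaussian factor. The change of variables $u=|x-y|^{2}/s$ then yields
\[
\int_{0}^{\infty}\frac{t}{s^{1+\alpha}}\,s^{(\delta_{0}-n)/2}e^{-c|x-y|^{2}/s}\,ds=t\,|x-y|^{\delta_{0}-n-2\alpha}\!\int_{0}^{\infty}u^{(n-\delta_{0})/2+\alpha-1}e^{-cu}\,du,
\]
and the last integral converges because $n\geq 3$, $\alpha>0$ and $\delta_{0}\leq 2$. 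Dividing by $\rho(x)^{\delta_{0}}$ and using $|x-y|\sim|x-y|+\sqrt{t^{1/\alpha}}$ in this regime produces exactly the first branch of the claimed inequality.

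In the near regime $\sqrt{t^{1/\alpha}}\geq|x-y|$, the Gaussian is no longer needed and I would drop it, reducing the problem to $\rho(x)^{-\delta_{0}}\int_{0}^{\infty}\eta^{\alpha}_{t}(s)\,s^{(\delta_{0}-n)/2}\,ds$. Here the scaling identity $\eta^{\alpha}_{t}(s)=t^{-1/\alpha}\eta^{\alpha}_{1}(s/t^{1/\alpha})$ combined with the substitution $s=t^{1/\alpha}u$ converts this into $t^{(\delta_{0}-n)/(2\alpha)}\int_{0}^{\infty}\eta^{\alpha}_{1}(u)u^{(\delta_{0}-n)/2}\,du$; the $u$-integral is finite because $\eta^{\alpha}_{1}$ is bounded on compacts and decays like $u^{-1-\alpha}$ at infinity by (\ref{eq-2.1}), while the exponent $(\delta_{0}-n)/2$ is strictly larger than $-1$ (since $\delta_{0}\leq 2$, $n\geq 3$) and strictly less than $\alpha$ after the $-1-\alpha$ from $\eta^{\alpha}_{1}$ at infinity. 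Rewriting $t^{(\delta_{0}-n)/(2\alpha)}=(\sqrt{t^{1/\alpha}})^{\delta_{0}}\cdot t\cdot t^{-(n+2\alpha)/(2\alpha)}$ and using $\sqrt{t^{1/\alpha}}\sim|x-y|+\sqrt{t^{1/\alpha}}$ gives the second branch. The only delicate point, and thus the main (but mild) obstacle, is checking that the exponent of $s$ falls in the range making both endpoints of the $u$-integral convergent; this is where the hypotheses $n\geq 3$, $0<\alpha<1$ and $\delta_{0}\leq 2$ are jointly used.
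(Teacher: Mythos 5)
Your route is the same as the paper's: subordination via (\ref{p-1}) combined with Lemma \ref{lem1}, then the crude bound $\eta^{\alpha}_{t}(s)\leq t/s^{1+\alpha}$ with the Gaussian retained to handle $\sqrt{t^{1/\alpha}}\leq|x-y|$, and the scaling identity $\eta^{\alpha}_{t}(s)=t^{-1/\alpha}\eta^{\alpha}_{1}(s/t^{1/\alpha})$ with the Gaussian discarded to handle $\sqrt{t^{1/\alpha}}\geq|x-y|$. Your far-regime computation and the rewriting $t^{(\delta_{0}-n)/(2\alpha)}=(\sqrt{t^{1/\alpha}})^{\delta_{0}}\,t\,t^{-(n+2\alpha)/(2\alpha)}$ are both correct.

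The one genuine flaw is your justification of convergence of $\int_{0}^{\infty}\eta^{\alpha}_{1}(u)\,u^{(\delta_{0}-n)/2}\,du$ near $u=0$. You claim $(\delta_{0}-n)/2>-1$ ``since $\delta_{0}\leq 2$, $n\geq 3$'', but that inequality means $\delta_{0}>n-2$, which fails for every $n\geq 4$ and also for $n=3$ unless $q>n$ (the standing hypothesis is only $q>n/2$, so $\delta_{0}=2-n/q$ may be arbitrarily close to $0$). Thus the exponent is in general $\leq -1$, and boundedness of $\eta^{\alpha}_{1}$ on compacts does not give integrability at the origin: a bounded density against $u^{-3/2}$, say, diverges. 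The integral is nonetheless finite, but for a different reason, namely the rapid vanishing of $\eta^{\alpha}_{1}$ at $0$ encoded in the third line of (\ref{eq-2.1}), $\int_{0}^{\infty}s^{-r}\eta^{\alpha}_{1}(s)\,ds<\infty$ for every $r>0$; apply it with $r=(n-\delta_{0})/2>0$, keeping your (correct) use of $\eta^{\alpha}_{1}(u)\lesssim u^{-1-\alpha}$ at infinity. This is exactly the property the paper invokes at the corresponding step, so with this repair your proof coincides with the paper's.
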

\begin{proof}
By the subordinative formula (\ref{p-1}) and Lemma {\ref{lem1}}, we obtain
\begin{eqnarray*}
 \Big|K^{\mathcal{L}}_{\alpha,t}(x,y)-K_{\alpha,t}(x-y)\Big|  &\leq& \int^{\infty}_{0}\eta^{\alpha}_{t}(s)\Big|K^{\mathcal{L}}_{s}(x,y)-K_{s}(x-y)\Big|ds  \\
   &\leq& C\int^{\infty}_{0}\eta^{\alpha}_{t}(s)\Big(\frac{\sqrt{s}}{\rho(x)}\Big)^{\delta_{0}}s^{-n/2}e^{-|x-y|^{2}/s}ds.
\end{eqnarray*}
On the one hand, letting $s=t^{1/\alpha}u$, we can get
\begin{eqnarray*}
  \Big|K^{\mathcal{L}}_{\alpha,t}(x,y)-K_{\alpha,t}(x-y)\Big| &\leq& C\int^{\infty}_{0}\frac{t}{s^{1+\alpha}}\Big(\frac{\sqrt{s}}{\rho(x)}\Big)^{\delta_{0}}s^{-n/2}e^{-|x-y|^{2}/s}ds \\
   &\leq&C\int^{\infty}_{0}\frac{t}{(t^{1/\alpha}u)^{1+\alpha}}\Big(\frac{\sqrt{t^{1/\alpha}u}}{\rho(x)}\Big)^{\delta_{0}}(t^{1/\alpha}u)^{-n/2}
   e^{-c|x-y|^{2}/(t^{1/\alpha}u)}t^{1/\alpha}du  \\
   &\leq& C\Big(\frac{\sqrt{t^{1/\alpha}}}{\rho(x)}\Big)^{\delta_{0}}t^{-{n}/{(2\alpha)}}\int^{\infty}_{0}u^{-1-\alpha-n/2+\delta_{0}/2}e^{-|x-y|^{2}/(t^{1/\alpha}u)}
   du.
\end{eqnarray*}
Applying the change of variables: ${|x-y|^{2}}/{(t^{1/\alpha}u)}=r^{2}$, we deduce that
\begin{eqnarray*}
  \Big|K^{\mathcal{L}}_{\alpha,t}(x,y)-K_{\alpha,t}(x-y)\Big| &\leq& C\Big(\frac{\sqrt{t^{1/\alpha}}}{\rho(x)}\Big)^{\delta_{0}}t^{-n/(2\alpha)}\int^{\infty}_{0}
  \Big(\frac{|x-y|^{2}}{t^{1/\alpha}r^{2}}\Big)^{-1-\alpha+\delta_{0}/2-n/2}e^{-r^{2}}\frac{|x-y|^{2}}{t^{1/\alpha}r^{3}}dr  \\
   &\leq& C\Big(\frac{\sqrt{t^{1/\alpha}}}{\rho(x)}\Big)^{\delta_{0}}|x-y|^{-2\alpha+\delta_{0}-n}t^{1-\delta_{0}/(2\alpha)}\int^{\infty}_{0}
   e^{-r^{2}}r^{2\alpha
   -\delta_{0}+n+1}dr \\
   &\leq& C\Big(\frac{\sqrt{t^{1/\alpha}}}{\rho(x)}\Big)^{\delta_{0}}\frac{t^{1-\delta_{0}/(2\alpha)}}{|x-y|^{2\alpha+n-\delta_{0}}}.
\end{eqnarray*}
On the other hand, taking $\tau=s/t^{1/\alpha}$, we obtain
\begin{eqnarray*}
  \Big|K^{\mathcal{L}}_{\alpha,t}(x,y)-K_{\alpha,t}(x-y)\Big|  &\leq& C\int^{\infty}_{0}\Big(\frac{\sqrt{s}}{\rho(x)}\Big)^{\delta_{0}}s^{-n/2}t^{-1/\alpha}\eta^{\alpha}_{1}(s/t^{1/\alpha})ds  \\
   &\leq&C\int^{\infty}_{0}\Big(\frac{\sqrt{t^{1/\alpha}\tau}}{\rho(x)}\Big)^{\delta_{0}}(t^{1/\alpha}\tau)^{-n/2}\eta^{\alpha}_{1}(\tau)d\tau  \\
   &\leq&C\Big(\frac{\sqrt{t^{1/\alpha}}}{\rho(x)}\Big)^{\delta_{0}}t^{-n/2\alpha}\int^{\infty}_{0}\eta^{\alpha}_{1}(\tau)\tau^{\delta_{0}/2-n/2}d\tau  \\
   &\leq& C \Big(\frac{\sqrt{t^{1/\alpha}}}{\rho(x)}\Big)^{\delta_{0}}t^{-n/2\alpha}.
\end{eqnarray*}
If $\sqrt{t^{1/\alpha}}\leq |x-y|$, then
\begin{eqnarray*}
  \Big|K^{\mathcal{L}}_{\alpha,t}(x,y)-K_{\alpha,t}(x-y)\Big| &\leq& C\Big(\frac{|x-y|}{\rho(x)}\Big)^{\delta_{0}}\frac{t}{|x-y|^{2\alpha+n}}  \\
   &\leq& C\Big(\frac{|x-y|}{\rho(x)}\Big)^{\delta_{0}}\frac{t}{(|x-y|+\sqrt{t^{1/\alpha}})^{2\alpha+n}}.
\end{eqnarray*}
If $\sqrt{t^{1/\alpha}}>|x-y|$, we can see that
\begin{eqnarray*}
  \Big|K^{\mathcal{L}}_{\alpha,t}(x,y)-K_{\alpha,t}(x-y)\Big| &\leq& C\Big(\frac{\sqrt{t^{1/\alpha}}}{\rho(x)}\Big)^{\delta_{0}}\frac{t}{t^{n/(2\alpha)+1}}  \\
   &\leq& C\Big(\frac{\sqrt{t^{1/\alpha}}}{\rho(x)}\Big)^{\delta_{0}}\frac{t}{(|x-y|+\sqrt{t^{1/\alpha}})^{2\alpha+n}}.
\end{eqnarray*}
\end{proof}

Let $\widetilde{D}_{\alpha,t}(\cdot)=t^{1/(2\alpha)}\nabla_{x}e^{-t(-\Delta)^{\alpha}}(\cdot).$ Similar to the proof of Proposition \ref{pro1}, by (\ref{p-1}) and Lemma \ref{lem1-1}, we have

\begin{proposition}\label{pro1-1}
There exists a constant $C>0$ such that
\begin{equation*}
  \Big|\widetilde{D}^{\mathcal{L}}_{\alpha,t}(x,y)-\widetilde{D}_{\alpha,t}(x-y)\Big|\leq
  \left\{\begin{aligned}
  &C\Big(\frac{|x-y|}{\rho(x)}\Big)^{\delta_{0}}\frac{t}{(|x-y|+\sqrt{t^{1/\alpha}})^{n+2\alpha}},\ \ \ \sqrt{t^{1/\alpha}}\leq |x-y|;\\
  &C\Big(\frac{\sqrt{t^{1/\alpha}}}{\rho(x)}\Big)^{\delta_{0}}\frac{t}{(|x-y|+\sqrt{t^{1/\alpha}})^{n+2\alpha}},\ \ \ \sqrt{t^{1/\alpha}}\geq |x-y|.
  \end{aligned}\right.
\end{equation*}

\end{proposition}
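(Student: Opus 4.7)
The plan is to mimic the proof of Proposition \ref{pro1} essentially line by line, replacing the estimate on $K^{\mathcal{L}}_{s}-K_{s}$ coming from Lemma \ref{lem1} by the gradient estimate provided in Lemma \ref{lem1-1}, and absorbing the extra factor $t^{1/(2\alpha)}$ into the $s$-integral via the change of variables $s=t^{1/\alpha}u$. More precisely, I would start from the subordinative formula (\ref{p-1}), write $K_{\alpha,t}(x-y)=\int_{0}^{\infty}\eta^{\alpha}_{t}(s)K_{s}(x-y)\,ds$ (the classical analogue, which is legitimate since $K_{t}(x-y)=e^{-t(-\Delta)}(x-y)$ and $(-\Delta)^{\alpha}$ admits the same subordinative representation), and then apply $t^{1/(2\alpha)}\nabla_{x}$ through the integral sign. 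This gives
\[
\widetilde{D}^{\mathcal{L}}_{\alpha,t}(x,y)-\widetilde{D}_{\alpha,t}(x-y)
=t^{1/(2\alpha)}\int_{0}^{\infty}\eta^{\alpha}_{t}(s)\bigl[\nabla_{x}K^{\mathcal{L}}_{s}(x,y)-\nabla_{x}K_{s}(x-y)\bigr]\,ds,
\]
to which Lemma \ref{lem1-1} supplies the pointwise bound $Cs^{-(n+1)/2}e^{-c|x-y|^{2}/s}(\sqrt{s}/\rho(x))^{\delta_{0}}$ on the integrand.

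Next, as in Proposition \ref{pro1}, I would derive two complementary bounds for the resulting integral. For the first, I would use the inequality $\eta^{\alpha}_{t}(s)\leq t/s^{1+\alpha}$ from (\ref{eq-2.1}), make the substitution $s=t^{1/\alpha}u$ (so that $t^{1/(2\alpha)}s^{-1/2}=u^{-1/2}$ exactly cancels the extra gradient factor), and then substitute $r^{2}=|x-y|^{2}/(t^{1/\alpha}u)$ to obtain a bound of the form $C(\sqrt{t^{1/\alpha}}/\rho(x))^{\delta_{0}}\,t^{1-\delta_{0}/(2\alpha)}/|x-y|^{n+2\alpha-\delta_{0}}$. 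For the second, I would use the scaling identity $\eta^{\alpha}_{t}(s)=t^{-1/\alpha}\eta^{\alpha}_{1}(s/t^{1/\alpha})$, change variables to $\tau=s/t^{1/\alpha}$, and invoke the integrability condition $\int_{0}^{\infty}\tau^{-r}\eta^{\alpha}_{1}(\tau)\,d\tau<\infty$ to bound the integral by $C(\sqrt{t^{1/\alpha}}/\rho(x))^{\delta_{0}}\,t^{-n/(2\alpha)}$.

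Finally, I would combine the two bounds according to the relative sizes of $|x-y|$ and $\sqrt{t^{1/\alpha}}$: when $\sqrt{t^{1/\alpha}}\leq |x-y|$, use the first estimate and note $t/|x-y|^{n+2\alpha}\simeq t/(|x-y|+\sqrt{t^{1/\alpha}})^{n+2\alpha}$ together with $(\sqrt{t^{1/\alpha}}/\rho(x))^{\delta_{0}}\leq (|x-y|/\rho(x))^{\delta_{0}}$; when $\sqrt{t^{1/\alpha}}\geq|x-y|$, use the second estimate and note $t\cdot t^{-n/(2\alpha)-1}\simeq t/(|x-y|+\sqrt{t^{1/\alpha}})^{n+2\alpha}$.

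The main obstacle, which is really just bookkeeping, will be keeping track of the exponents in the $u$-integral after the substitution: one must verify that the exponent $-1-\alpha-(n+1)/2+\delta_{0}/2$ in $u$ (plus the Jacobian) gives a convergent integral once the Gaussian in $|x-y|^{2}/(t^{1/\alpha}u)$ is rewritten, and that the net $t$-power is precisely $t/(|x-y|+\sqrt{t^{1/\alpha}})^{n+2\alpha}$. Because the extra factor $t^{1/(2\alpha)}$ compensates exactly for the extra $s^{-1/2}$ produced by $\nabla_{x}$, no genuinely new difficulty arises beyond what was already handled in Proposition \ref{pro1}, so the argument goes through verbatim after this compensation is observed.
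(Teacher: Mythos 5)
Your proposal is correct and follows essentially the same route as the paper's own proof: subordination via (\ref{p-1}) combined with Lemma \ref{lem1-1}, the two complementary bounds obtained through the substitutions $s=t^{1/\alpha}u$ (with $\eta^{\alpha}_{t}(s)\leq t/s^{1+\alpha}$ and then $r^{2}=|x-y|^{2}/(t^{1/\alpha}u)$) and $\tau=s/t^{1/\alpha}$ (with the moment condition on $\eta^{\alpha}_{1}$), followed by the case split on $\sqrt{t^{1/\alpha}}$ versus $|x-y|$. The only minor slip is your quoted first intermediate bound: the computation actually yields $C\big(\sqrt{t^{1/\alpha}}/\rho(x)\big)^{\delta_{0}}\,t^{1+1/(2\alpha)-\delta_{0}/(2\alpha)}/|x-y|^{n+2\alpha+1-\delta_{0}}$ (the extra $u^{-1/2}$ shifts the exponents rather than cancelling outright), which is harmless since in the regime $\sqrt{t^{1/\alpha}}\leq|x-y|$ it is dominated by the bound you stated and leads to the same conclusion.
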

\begin{proof}
By the subordinative formula (\ref{p-1}) and Lemma {\ref{lem1-1}}, we obtain
\begin{eqnarray*}
 \Big|\widetilde{D}^{\mathcal{L}}_{\alpha,t}(x,y)-\widetilde{D}_{\alpha,t}(x-y)\Big|  &\leq& \sqrt{t^{1/\alpha}}\int^{\infty}_{0}\eta^{\alpha}_{t}(s)\Big|\nabla_{x}K^{\mathcal{L}}_{s}(x,y)-\nabla_{x}K_{s}(x-y)\Big|ds  \\
   &\leq& C\sqrt{t^{1/\alpha}}\int^{\infty}_{0}\eta^{\alpha}_{t}(s)\Big(\frac{\sqrt{s}}{\rho(x)}\Big)^{\delta_{0}}s^{-(n+1)/2}e^{-c|x-y|^{2}/s}ds.
\end{eqnarray*}
On the one hand, letting $s=t^{1/\alpha}u$, we have
\begin{eqnarray*}
  \Big|\widetilde{D}^{\mathcal{L}}_{\alpha,t}(x,y)-\widetilde{D}_{\alpha,t}(x-y)\Big| &\leq& C\sqrt{t^{1/\alpha}}\int^{\infty}_{0}\frac{t}{s^{1+\alpha}}\Big(\frac{\sqrt{s}}{\rho(x)}\Big)^{\delta_{0}}s^{-(n+1)/2}e^{-c|x-y|^{2}/s}ds \\
   &\leq&C\sqrt{t^{1/\alpha}}\int^{\infty}_{0}\frac{t}{(t^{1/\alpha}u)^{1+\alpha}}\Big(\frac{\sqrt{t^{1/\alpha}u}}{\rho(x)}\Big)^{\delta_{0}}
   (t^{1/\alpha}u)^{-(n+1)/2}
   e^{-c|x-y|^{2}/(t^{1/\alpha}u)}t^{1/\alpha}du  \\
   &\leq& C\sqrt{t^{1/\alpha}}\Big(\frac{\sqrt{t^{1/\alpha}}}{\rho(x)}\Big)^{\delta_{0}}t^{-(n+1)/(2\alpha)}\int^{\infty}_{0}u^{-1-\alpha-(n+1)/2+\delta_{0}/2}
   e^{-c|x-y|^{2}/(t^{1/\alpha}u)}
   du.
\end{eqnarray*}
By the change of variables ${|x-y|^{2}}/{(t^{1/\alpha}u)}=r^{2}$, we can get
\begin{eqnarray*}
  \Big|\widetilde{D}^{\mathcal{L}}_{\alpha,t}(x,y)-\widetilde{D}_{\alpha,t}(x-y)\Big| &\leq& C\Big(\frac{\sqrt{t^{1/\alpha}}}{\rho(x)}\Big)^{\delta_{0}}t^{-n/2\alpha}\int^{\infty}_{0}
  \Big(\frac{|x-y|^{2}}{t^{1/\alpha}r^{2}}\Big)^{-1-\alpha+\delta_{0}/2-(n+1)/2}e^{-r^{2}}\frac{|x-y|^{2}}{t^{1/\alpha}r^{3}}dr  \\
   &\leq& C\Big(\frac{\sqrt{t^{1/\alpha}}}{\rho(x)}\Big)^{\delta_{0}}|x-y|^{-2\alpha+\delta_{0}-n-1}t^{1-\delta_{0}/(2\alpha)+1/(2\alpha)}
   \int^{\infty}_{0}e^{-r^{2}}r^{2\alpha
   -\delta_{0}+n}dr \\
   &\leq& C\Big(\frac{\sqrt{t^{1/\alpha}}}{\rho(x)}\Big)^{\delta_{0}}\frac{t^{1+1/(2\alpha)-\delta_{0}/(2\alpha)}}{|x-y|^{2\alpha+n+1-\delta_{0}}}.
\end{eqnarray*}
On the other hand, the change of variable $\tau=s/t^{1/\alpha}$ gives
\begin{eqnarray*}
  \Big|\widetilde{D}^{\mathcal{L}}_{\alpha,t}(x,y)-\widetilde{D}_{\alpha,t}(x-y)\Big|  &\leq& C\sqrt{t^{1/\alpha}}\int^{\infty}_{0}\Big(\frac{\sqrt{s}}{\rho(x)}\Big)^{\delta_{0}}s^{-(n+1)/2}t^{-1/\alpha}\eta^{\alpha}_{1}(s/t^{1/\alpha})ds  \\
   &\leq&C\sqrt{t^{1/\alpha}}\int^{\infty}_{0}\Big(\frac{\sqrt{t^{1/\alpha}\tau}}{\rho(x)}\Big)^{\delta_{0}}(t^{1/\alpha}\tau)^{-(n+1)/2}
   \eta^{\alpha}_{1}(\tau)d\tau  \\
   &\leq&C\Big(\frac{\sqrt{t^{1/\alpha}}}{\rho(x)}\Big)^{\delta_{0}}t^{-n/(2\alpha)}\int^{\infty}_{0}\eta^{\alpha}_{1}(\tau)\tau^{\delta_{0}/2-(n+1)/2}d\tau  \\
   &\leq& C \Big(\frac{\sqrt{t^{1/\alpha}}}{\rho(x)}\Big)^{\delta_{0}}t^{-n/(2\alpha)}.
\end{eqnarray*}
If $\sqrt{t^{1/\alpha}}\leq |x-y|$, then
\begin{eqnarray*}
  \Big|\widetilde{D}^{\mathcal{L}}_{\alpha,t}(x,y)-\widetilde{D}_{\alpha,t}(x-y)\Big| &\leq& C\Big(\frac{|x-y|}{\rho(x)}\Big)^{\delta_{0}}\frac{t}{|x-y|^{2\alpha+n}}  \\
   &\leq& C\Big(\frac{|x-y|}{\rho(x)}\Big)^{\delta_{0}}\frac{t}{(|x-y|+\sqrt{t^{1/\alpha}})^{2\alpha+n}}.
\end{eqnarray*}
If $t^{1/2\alpha}>|x-y|$, we can also get
\begin{eqnarray*}
  \Big|\widetilde{D}^{\mathcal{L}}_{\alpha,t}(x,y)-\widetilde{D}_{\alpha,t}(x-y)\Big| &\leq& C\Big(\frac{\sqrt{t^{1/\alpha}}}{\rho(x)}\Big)^{\delta_{0}}\frac{t}{t^{n/(2\alpha)+1}}  \\
   &\leq& C\Big(\frac{\sqrt{t^{1/\alpha}}}{\rho(x)}\Big)^{\delta_{0}}\frac{t}{(|x-y|+\sqrt{t^{1/\alpha}})^{2\alpha+n}}.
\end{eqnarray*}

\end{proof}
Let $D^{m}_{\alpha,t}(\cdot)=t^{m}\partial^{m}_{t}e^{-t(-\Delta)^{\alpha}}(\cdot).$ We have
\begin{proposition}\label{pro1-1-1}
There exists a constant $C>0$ such that
\begin{equation*}
  \Big|D^{\mathcal{L},m}_{\alpha,t}(x,y)-D^{m}_{\alpha,t}(x-y)\Big|\leq
 \left\{ \begin{aligned}
  &C\Big(\frac{|x-y|}{\rho(x)}\Big)^{\delta_{0}}\frac{t}{(|x-y|+\sqrt{t^{1/\alpha}})^{n+2\alpha}},\ \ \ \sqrt{t^{1/\alpha}}\leq |x-y|;\\
  &C\Big(\frac{\sqrt{t^{1/\alpha}}}{\rho(x)}\Big)^{\delta_{0}}\frac{t}{(|x-y|+\sqrt{t^{1/\alpha}})^{n+2\alpha}},\ \ \ \sqrt{t^{1/\alpha}}\geq |x-y|.
  \end{aligned}\right.
\end{equation*}

\end{proposition}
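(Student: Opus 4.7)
My plan is to mirror the strategy used in Propositions \ref{pro1} and \ref{pro1-1}, adapting it to accommodate the $t^{m}\partial_{t}^{m}$ factor. Starting from the subordinative formula (\ref{p-1}), I differentiate $m$ times under the integral sign to obtain
\begin{equation*}
D^{\mathcal{L},m}_{\alpha,t}(x,y)-D^{m}_{\alpha,t}(x-y) = t^{m}\int_{0}^{\infty}\partial_{t}^{m}\eta^{\alpha}_{t}(s)\bigl[K^{\mathcal{L}}_{s}(x,y)-K_{s}(x-y)\bigr]\,ds.
\end{equation*}
Applying Lemma \ref{lem1} bounds the bracket by $C(\sqrt{s}/\rho(x))^{\delta_{0}}s^{-n/2}e^{-c|x-y|^{2}/s}$, which I then pair with an appropriate size estimate for $t^{m}\partial_{t}^{m}\eta_{t}^{\alpha}(s)$.

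The crucial auxiliary step is to show that $t^{m}\partial_{t}^{m}\eta_{t}^{\alpha}(s)$ satisfies both $|t^{m}\partial_{t}^{m}\eta_{t}^{\alpha}(s)|\leq C_{m,\alpha}\,t/s^{1+\alpha}$ and $|t^{m}\partial_{t}^{m}\eta_{t}^{\alpha}(s)|\leq C_{m,\alpha}\,t^{-1/\alpha}\widetilde{\eta}_{1}^{\alpha}(s/t^{1/\alpha})$, where $\widetilde{\eta}_{1}^{\alpha}$ is a nonnegative function sharing the negative-moment integrability $\int_{0}^{\infty}\tau^{-r}\widetilde{\eta}_{1}^{\alpha}(\tau)\,d\tau<\infty$ with $\eta_{1}^{\alpha}$. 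Both follow by induction on $m$ from the scaling identity $\eta_{t}^{\alpha}(s)=t^{-1/\alpha}\eta_{1}^{\alpha}(s/t^{1/\alpha})$ in (\ref{eq-2.1}): each application of $t\partial_{t}$ generates a linear combination of terms $t^{-1/\alpha}(s/t^{1/\alpha})^{k}(\eta_{1}^{\alpha})^{(k)}(s/t^{1/\alpha})$ for $0\leq k\leq m$, all of which inherit the $s^{-(1+\alpha)}$ tail decay and the negative-moment integrability of $\eta_{1}^{\alpha}$.

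With those bounds in hand, the two regimes are treated exactly as in Proposition \ref{pro1}. For $\sqrt{t^{1/\alpha}}\leq|x-y|$, I use the first bound on $t^{m}\partial_{t}^{m}\eta_{t}^{\alpha}(s)$, substitute $s=t^{1/\alpha}u$, and exploit the Gaussian factor $e^{-c|x-y|^{2}/s}$ via the change of variables $|x-y|^{2}/s=r^{2}$ to produce the target factor $(|x-y|/\rho(x))^{\delta_{0}}t/|x-y|^{n+2\alpha}$. For $\sqrt{t^{1/\alpha}}\geq|x-y|$, I use the second bound, discard the Gaussian, substitute $\tau=s/t^{1/\alpha}$, and invoke the moment integrability of $\widetilde{\eta}_{1}^{\alpha}$ to obtain $(\sqrt{t^{1/\alpha}}/\rho(x))^{\delta_{0}}t^{-n/(2\alpha)}$. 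Combining the two cases with the obvious comparison $|x-y|+\sqrt{t^{1/\alpha}}\sim\max\{|x-y|,\sqrt{t^{1/\alpha}}\}$ yields the claim.

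The main obstacle I expect is the auxiliary derivative bound on $\eta_{t}^{\alpha}$: although the induction via the scaling identity is conceptually routine, one has to verify that each derivative $(\eta_{1}^{\alpha})^{(k)}(\tau)$ retains enough decay as $\tau\to\infty$ to preserve the $t/s^{1+\alpha}$ estimate, and enough integrability against negative powers of $\tau$ near the origin to keep the second estimate usable. Once this is secured, the remainder of the proof is a direct transcription of the exponent arithmetic already carried out in Propositions \ref{pro1} and \ref{pro1-1}, with the extra factor $t^{m}$ absorbed neatly by the corresponding $t^{-m}$ implicit in $t^{m}\partial_{t}^{m}\eta_{t}^{\alpha}$.
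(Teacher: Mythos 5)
Your two-regime exponent arithmetic is fine and does reproduce the statement once your auxiliary bounds on the subordination density are granted; the problem is precisely those bounds. You claim that
$|t^{m}\partial_{t}^{m}\eta^{\alpha}_{t}(s)|\leq C\,t/s^{1+\alpha}$ and
$|t^{m}\partial_{t}^{m}\eta^{\alpha}_{t}(s)|\leq C\,t^{-1/\alpha}\widetilde{\eta}^{\alpha}_{1}(s/t^{1/\alpha})$ with finite negative moments
``follow by induction on $m$ from the scaling identity'' in (\ref{eq-2.1}). They do not. The scaling identity only produces the algebraic form
$t^{m}\partial_{t}^{m}\eta^{\alpha}_{t}(s)=t^{-1/\alpha}\sum_{k\leq m}c_{k}\,\tau^{k}(\eta^{\alpha}_{1})^{(k)}(\tau)$, $\tau=s/t^{1/\alpha}$;
to conclude the two estimates you need decay of the \emph{derivatives} $(\eta^{\alpha}_{1})^{(k)}(\tau)$ like $\tau^{-1-\alpha-k}$ as $\tau\to\infty$ and rapid vanishing (or at least negative-moment integrability) as $\tau\to 0^{+}$. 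None of this is contained in the properties (\ref{eq-2.1}) listed in the paper, and pointwise bounds on a function never control its derivatives; what you need are the classical asymptotics of the one-sided $\alpha$-stable density, which must be proved or cited. As written, the ``crucial auxiliary step'' is asserted rather than established, and this is a genuine gap.

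The paper avoids this issue entirely: its (sketched) proof uses the subordination formula (\ref{p-1}) together with Proposition \ref{pro2.5}, i.e.\ the estimate for $t^{m}\partial_{t}^{m}\bigl(K^{\mathcal{L}}_{t}(x,y)-K_{t}(x-y)\bigr)$ at the heat-kernel level, and then repeats the change-of-variables computation of Proposition \ref{pro1}. The mechanism is to move the $t$-derivatives onto the heat kernels in the $s$-variable rather than onto $\eta$: the identity $t\partial_{t}\eta^{\alpha}_{t}(s)=-\alpha^{-1}\partial_{s}\bigl(s\,\eta^{\alpha}_{t}(s)\bigr)$ \emph{does} follow from the scaling relation alone, so after integration by parts one only needs the bounds of Proposition \ref{pro2.5}(i) on $s^{j}\partial_{s}^{j}(K^{\mathcal{L}}_{s}-K_{s})$, $1\leq j\leq m$, together with the listed moment properties of $\eta^{\alpha}_{t}$ itself. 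If you replace your differentiation of $\eta^{\alpha}_{1}$ by this integration-by-parts step (or, alternatively, supply a reference for the stable-density derivative asymptotics), the rest of your argument goes through as you describe.
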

\begin{proof}
The proposition can be proved by Proposition \ref{pro2.5} and (\ref{p-1}). Since the argument is similar to that of Proposition \ref{pro1}, the details is omitted.

\end{proof}

\begin{proposition}\label{pro2}
For every $0<\delta<\min\{2\alpha,\delta_{0}\}$ and $|y-z|<\min\{C\rho(y),|x-y|/4\}$, there exists a constant $C>0$ such that
$$\Big|\Big(K^{\mathcal{L}}_{\alpha,t}(x,y)-K_{\alpha,t}(x-y)\Big)-\Big(K^{\mathcal{L}}_{\alpha,t}(x,z)-K_{\alpha,t}(x-z)\Big)\Big|\leq C\Big(\frac{|y-z|}{\rho(x)}\Big)^{\delta}
\frac{t}{(\sqrt{t^{1/\alpha}}+|x-y|)^{n+2\alpha}}.$$
\end{proposition}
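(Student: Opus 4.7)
The plan is to reduce the claim to the heat-kernel Hölder estimate in Lemma \ref{lem2} via the subordinative formula (\ref{p-1}). Since $K_{\alpha,t}(x-y)=\int_0^\infty \eta^{\alpha}_t(s)K_s(x-y)\,ds$ as well, I would begin by writing
\begin{align*}
&\bigl(K^{\mathcal{L}}_{\alpha,t}(x,y)-K_{\alpha,t}(x-y)\bigr)-\bigl(K^{\mathcal{L}}_{\alpha,t}(x,z)-K_{\alpha,t}(x-z)\bigr)\\
&\quad=\int_{0}^{\infty}\eta^{\alpha}_{t}(s)\Bigl\{\bigl[K^{\mathcal{L}}_{s}(x,y)-K_{s}(x-y)\bigr]-\bigl[K^{\mathcal{L}}_{s}(x,z)-K_{s}(x-z)\bigr]\Bigr\}\,ds.
\end{align*}
Lemma \ref{lem2} then controls the integrand in absolute value by $C(|y-z|/\rho(y))^{\delta}s^{-n/2}e^{-c|x-y|^{2}/s}$, and its hypotheses are satisfied uniformly in $s$, because the constraint $|y-z|<\min\{C\rho(y),|x-y|/4\}$ in the proposition is independent of $s$.

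The remaining task is to bound the scalar integral $\mathcal{I}:=\int_0^\infty \eta^{\alpha}_t(s)s^{-n/2}e^{-c|x-y|^{2}/s}\,ds$, which I would handle using the same two-case device that appears in the proof of Proposition \ref{pro1}. When $\sqrt{t^{1/\alpha}}\leq|x-y|$, I would apply $\eta^{\alpha}_t(s)\leq t/s^{1+\alpha}$ together with the substitution $r^{2}=|x-y|^{2}/s$ to obtain $\mathcal{I}\leq Ct/|x-y|^{n+2\alpha}$. When $\sqrt{t^{1/\alpha}}>|x-y|$, the scaling $\tau=s/t^{1/\alpha}$ combined with the integrability $\int_0^\infty \eta^{\alpha}_1(\tau)\tau^{-n/2}\,d\tau<\infty$ yields $\mathcal{I}\leq Ct^{-n/(2\alpha)}$. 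Merging the two cases gives $\mathcal{I}\leq Ct/(\sqrt{t^{1/\alpha}}+|x-y|)^{n+2\alpha}$, which is exactly the tail appearing in the target bound.

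To transfer the $\rho(y)$ delivered by Lemma \ref{lem2} to the $\rho(x)$ of the conclusion, I would invoke Lemma \ref{lem2.5}(i) in the form $1/\rho(y)\leq C(1+|x-y|/\rho(x))^{k_{0}}/\rho(x)$, producing a polynomial factor in $|x-y|/\rho(x)$. I would absorb this factor by peeling off a small fraction of the Gaussian decay $e^{-c|x-y|^{2}/s}$ in Lemma \ref{lem2} before doing the $s$-integration, using the same $\varepsilon$-trick that appears at the end of Lemma \ref{lem2-2}; this costs only a slightly smaller exponential constant and leaves the tail $(\sqrt{t^{1/\alpha}}+|x-y|)^{-(n+2\alpha)}$ intact.

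The main obstacle I anticipate is the clean bookkeeping of the $\rho(y)\leftrightarrow\rho(x)$ switch and verifying that the admissible range $\delta<\min\{2\alpha,\delta_{0}\}$ is precisely what is needed: the $\delta_{0}$ cap is inherited from Lemma \ref{lem2}, while the $2\alpha$ cap reflects the natural spatial regularity of the fractional heat kernel $K_{\alpha,t}$ and enters through the absorption step rather than through $\mathcal{I}$ itself. Apart from this, the argument is routine: a subordination identity followed by two standard substitutions in the subordinator integral.
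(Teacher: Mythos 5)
Your reduction is, in its skeleton, exactly the paper's own proof of Proposition \ref{pro2}: subordinate both $K^{\mathcal{L}}_{\alpha,t}$ and $K_{\alpha,t}$ through (\ref{p-1}), apply Lemma \ref{lem2} to the integrand, and estimate $\int_{0}^{\infty}\eta^{\alpha}_{t}(s)s^{-n/2}e^{-c|x-y|^{2}/s}\,ds$ by the same two substitutions used for Proposition \ref{pro1} (the bound $\eta^{\alpha}_{t}(s)\leq t/s^{1+\alpha}$ with $r^{2}=|x-y|^{2}/(t^{1/\alpha}u)$ when $\sqrt{t^{1/\alpha}}\leq|x-y|$, and $\tau=s/t^{1/\alpha}$ together with the finiteness of the negative moments of $\eta^{\alpha}_{1}$ otherwise). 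That part is correct and is precisely what the paper does.

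The step you add on top of this, the conversion of the $\rho(y)$ produced by Lemma \ref{lem2} into the $\rho(x)$ of the statement, is where your argument has a genuine gap. Lemma \ref{lem2.5}(i) gives $(|y-z|/\rho(y))^{\delta}\leq C(|y-z|/\rho(x))^{\delta}\,(1+|x-y|/\rho(x))^{k_{0}\delta}$, and the extra factor is independent of $s$, so it cannot be traded against $e^{-c|x-y|^{2}/s}$ uniformly in $s$: taking $s=|x-y|^{2}$, the inequality $(1+|x-y|/\rho(x))^{k_{0}\delta}e^{-c|x-y|^{2}/s}\leq Ce^{-c'|x-y|^{2}/s}$ would force $C\gtrsim(1+|x-y|/\rho(x))^{k_{0}\delta}$, which is unbounded. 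The $\varepsilon$-trick at the end of Lemma \ref{lem2-2} only converts such a factor into $C_{\epsilon}e^{\epsilon|x-y|^{2}/s}(1+\sqrt{s}/\rho)^{k_{0}\delta}$ and then kills the leftover by choosing $N$ large in the reserve decay $(1+\sqrt{s}/\rho)^{-N}$ supplied there by Lemma \ref{pro2.2}; Lemma \ref{lem2} offers no such reserve, and the subordination integral always includes the range $s\gg\rho(x)^{2}$, so the leftover survives the $s$-integration and what you actually obtain is $C(|y-z|/\rho(x))^{\delta}(1+|x-y|/\rho(x))^{k_{0}\delta}\,t\,(\sqrt{t^{1/\alpha}}+|x-y|)^{-n-2\alpha}$, strictly weaker than the claim when $|x-y|\gg\rho(x)$. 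To be fair, the paper performs this switch silently (it applies Lemma \ref{lem2} and simply writes $\rho(x)$ in place of $\rho(y)$, and likewise exchanges $\rho(x)$ and $\rho(y)$ in the smallness hypothesis), so you have correctly sensed that something needs justification; but your patch does not supply it. The clean repairs are either to state the conclusion with $\rho(y)$, which is all the subordination argument delivers, or to observe that in the only places the proposition is invoked (term $II$ in Theorem \ref{the3} and $M_{2,2}$ in Theorem \ref{the3.1}) one has $|x-y|\leq c\rho(y)$, hence $\rho(x)\sim\rho(y)$ by Lemma \ref{lem2.5}(i) and the switch is immediate. A minor further point: the restriction $\delta<2\alpha$ does not enter through any absorption here; the $s$-integral converges for every $\delta>0$, and $2\alpha$ is only needed later when the resulting bound is integrated in $t$.
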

\begin{proof}
By the subordinative formula (\ref{p-1}), we can use Lemma \ref{lem2} to deduce
\begin{eqnarray*}
  &&\Big|\Big(K^{\mathcal{L}}_{\alpha,t}(x,y)-K_{\alpha,t}(x-y)\Big)-\Big(K^{\mathcal{L}}_{\alpha,t}(x,z)-K_{\alpha,t}(x-z)\Big)\Big|\\
  &&\leq \int^{\infty}_{0}\eta^{\alpha}_{t}(s)\Big|\Big(K^{\mathcal{L}}_{s}(x,y)-K_{s}(x-y)\Big)-\Big(K^{\mathcal{L}}_{s}(x,z)-K_{s}(x-z)\Big)\Big|ds  \\
   &&\leq  C\int^{\infty}_{0}\eta^{\alpha}_{t}(s)\Big(\frac{|y-z|}{\rho(x)}\Big)^{\delta}s^{-n/2}e^{-|x-y|^{2}/s}ds.
\end{eqnarray*}
On the one hand, taking $s=t^{1/\alpha}u$, we obtain
\begin{eqnarray*}
&&\Big|\Big(K^{\mathcal{L}}_{\alpha,t}(x,y)-K_{\alpha,t}(x-y)\Big)-\Big(K^{\mathcal{L}}_{\alpha,t}(x,z)-K_{\alpha,t}(x-z)\Big)\Big|\\ &&\leq
\int^{\infty}_{0}\frac{Ct}{s^{1+\alpha}}\Big(\frac{|y-z|}{\rho(x)}\Big)^{\delta}s^{-n/2}e^{-|x-y|^{2}/s}ds  \\
   &&\leq  C\int^{\infty}_{0}\frac{t}{(t^{1/\alpha}u)^{1+\alpha}}\Big(\frac{|y-z|}{\rho(x)}\Big)^{\delta}(t^{1/\alpha}u)^{-n/2}
   e^{-|x-y|^{2}/(t^{1/\alpha}u)}t^{1/\alpha}du \\
   &&\leq  C\Big(\frac{|y-z|}{\rho(x)}\Big)^{\delta}t^{-n/(2\alpha)}\int^{\infty}_{0}e^{-|x-y|^{2}/(t^{1/\alpha}u)}u^{-1-\alpha-n/2}du.
\end{eqnarray*}
Let $\displaystyle \frac{|x-y|^{2}}{(t^{1/\alpha}u)}=r$. We can see that
\begin{eqnarray*}
 &&\Big|\Big(K^{\mathcal{L}}_{\alpha,t}(x,y)-K_{\alpha,t}(x-y)\Big)-\Big(K^{\mathcal{L}}_{\alpha,t}(x,z)-K_{\alpha,t}(x-z)\Big)\Big|\\  &&\leq  C\Big(\frac{|y-z|}{\rho(x)}\Big)^{\delta}t^{-n/(2\alpha)}\int^{\infty}_{0}e^{-r}\Big(\frac{|x-y|^{2}}{t^{1/\alpha}r}\Big)^{-1-\alpha-n/2}
 \frac{|x-y|^{2}}{t^{1/\alpha}r^{2}}dr \\
   &&\leq  C\Big(\frac{|y-z|}{\rho(x)}\Big)^{\delta}\frac{t}{|x-y|^{2\alpha+n}}.
\end{eqnarray*}
On the other hand, letting $\displaystyle \frac{s}{t^{1/\alpha}}=\tau$, we obtain
\begin{eqnarray*}
 &&\Big|\Big(K^{\mathcal{L}}_{\alpha,t}(x,y)-K_{\alpha,t}(x-y)\Big)-\Big(K^{\mathcal{L}}_{\alpha,t}(x,z)-K_{\alpha,t}(x-z)\Big)\Big| \\ &&\leq  \int^{\infty}_{0}\frac{1}{t^{1/\alpha}}\eta^{\alpha}_{1}(s/t^{1/\alpha})\Big(\frac{|y-z|}{\rho(x)}\Big)^{\delta}s^{-n/2}ds  \\
   &&\leq  C\Big(\frac{|y-z|}{\rho(x)}\Big)^{\delta}\int^{\infty}_{0}\eta^{\alpha}_{1}(\tau)(t^{1/\alpha}\tau)^{-n/2}d\tau \\
   &&\leq  C\Big(\frac{|y-z|}{\rho(x)}\Big)^{\delta}t^{-n/(2\alpha)}.
\end{eqnarray*}
{\it Case 1: $\sqrt{t^{1/\alpha}}\leq|x-y|$.} We obtain
\begin{eqnarray*}
 &&\Big|\Big(K^{\mathcal{L}}_{\alpha,t}(x,y)-K_{\alpha,t}(x-y)\Big)-\Big(K^{\mathcal{L}}_{\alpha,t}(x,z)-K_{\alpha,t}(x-z)\Big)\Big|\\
   &&\leq  C\Big(\frac{|y-z|}{\rho(x)}\Big)^{\delta}
\frac{t}{(\sqrt{t^{1/\alpha}}+|x-y|)^{n+2\alpha}}.
\end{eqnarray*}
{\it Case 2: $\sqrt{t^{1/\alpha}}>|x-y|$.} We can see that
\begin{eqnarray*}
  &&\Big|\Big(K^{\mathcal{L}}_{\alpha,t}(x,y)-K_{\alpha,t}(x-y)\Big)-\Big(K^{\mathcal{L}}_{\alpha,t}(x,z)-K_{\alpha,t}(x-z)\Big)\Big|\\
   &&\leq C\Big(\frac{|y-z|}{\rho(x)}\Big)^{\delta}
\frac{t}{t^{n/(2\alpha)+1}}\leq C\Big(\frac{|y-z|}{\rho(x)}\Big)^{\delta}\frac{t}{(\sqrt{t^{1/\alpha}}+|x-y|)^{n+2\alpha}}.
\end{eqnarray*}
\end{proof}

\begin{proposition}\label{pro2-2-2}
For every $0<\delta<\min\{2\alpha,\delta_{0}\}$ and $|y-z|<\min\{C\rho(y),|x-y|/4\}$, there exists a constant $C>0$ such that
$$\Big|\Big(D^{\mathcal{L},m}_{\alpha,t}(x,y)-D^{m}_{\alpha,t}(x-y)\Big)-\Big(D^{\mathcal{L},m}_{\alpha,t}(x,z)-D^{m}_{\alpha,t}(x-z)\Big)\Big|\leq C\Big(\frac{|y-z|}{\rho(x)}\Big)^{\delta}
\frac{t}{(\sqrt{t^{1/\alpha}}+|x-y|)^{n+2\alpha}}.$$
\end{proposition}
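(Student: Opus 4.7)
The statement is the Hölder-in-$y$ analog of Proposition \ref{pro1-1-1} for the derivative kernel $D^{\mathcal{L},m}_{\alpha,t}$. I will follow the template of Proposition \ref{pro2}, with only minor adjustments to accommodate the factor $t^m\partial_t^m$. The first step is to apply the subordinative formula (\ref{p-1}) and pull $t^m\partial_t^m$ inside the $s$-integral, where it acts solely on $\eta_t^\alpha(s)$ since the kernels $K^{\mathcal{L}}_s,K_s$ depend on $s$ alone. This yields
\begin{align*}
&\bigl[D^{\mathcal{L},m}_{\alpha,t}(x,y)-D^m_{\alpha,t}(x-y)\bigr]-\bigl[D^{\mathcal{L},m}_{\alpha,t}(x,z)-D^m_{\alpha,t}(x-z)\bigr]\\
&\quad=\int_0^\infty\bigl[t^m\partial_t^m\eta_t^\alpha(s)\bigr]\bigl\{\bigl(K^{\mathcal{L}}_s(x,y)-K_s(x-y)\bigr)-\bigl(K^{\mathcal{L}}_s(x,z)-K_s(x-z)\bigr)\bigr\}\,ds.
\end{align*}
By Lemma \ref{lem2}, the inner difference is controlled by $C(|y-z|/\rho(y))^\delta s^{-n/2}e^{-c|x-y|^2/s}$, which is applicable here because the standing hypothesis $|y-z|<C\rho(y)$ together with Lemma \ref{lem2.5}(i) gives $\rho(x)\sim\rho(y)$, so the constraint $|y-z|<C\rho(x)$ of Lemma \ref{lem2} also holds after adjusting constants.

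The next step is to verify that $t^m\partial_t^m\eta_t^\alpha(s)$ inherits from $\eta_t^\alpha$ the three structural properties used in Proposition \ref{pro2}: scaling, pointwise decay, and finite negative moments. Writing $u=s/t^{1/\alpha}$, the scaling $\eta_t^\alpha(s)=t^{-1/\alpha}\eta_1^\alpha(u)$ together with a chain-rule induction in $m$ gives $t^m\partial_t^m\eta_t^\alpha(s)=t^{-1/\alpha}H_m(u)$, where $H_m$ is a polynomial combination of $\eta_1^\alpha(u)$ and its first $m$ derivatives with polynomial coefficients in $u$. Smoothness of $\eta_1^\alpha$ and the estimates in (\ref{eq-2.1}) then yield $|H_m(u)|\leq C_m u^{-1-\alpha}$ for $u\geq 1$ and $\int_0^\infty u^{-r}|H_m(u)|\,du<\infty$ for every $r>0$.

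With these properties in hand, the two integral estimates carried out in Proposition \ref{pro2} by the substitutions $s=t^{1/\alpha}u$ and $\tau=s/t^{1/\alpha}$ may be repeated verbatim with $\eta_t^\alpha(s)$ replaced by $|t^m\partial_t^m\eta_t^\alpha(s)|$. The first substitution, combined with $|H_m(u)|\lesssim u^{-1-\alpha}$, yields a bound of order $(|y-z|/\rho(x))^\delta\cdot t/|x-y|^{n+2\alpha}$; the second yields $(|y-z|/\rho(x))^\delta\cdot t^{-n/(2\alpha)}$. The usual case split $\sqrt{t^{1/\alpha}}\leq|x-y|$ versus $\sqrt{t^{1/\alpha}}>|x-y|$ then combines these into the desired bound.

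The main technical point is the verification of the decay and integrability of $H_m$; once that is granted, the remainder of the proof is essentially a transcription of Proposition \ref{pro2}. Justification of the differentiation under the integral sign is routine given the good decay of $K^{\mathcal{L}}_s-K_s$ supplied by Lemma \ref{lem1} and the control on $\partial_t^m\eta_t^\alpha(s)$ just described.
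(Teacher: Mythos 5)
The load-bearing step of your argument is unsupported. You reduce everything to the claim that $t^{m}\partial_{t}^{m}\eta^{\alpha}_{t}(s)=t^{-1/\alpha}H_{m}(s/t^{1/\alpha})$ with $|H_{m}(u)|\leq C_{m}u^{-1-\alpha}$ for $u\geq 1$ and $\int_{0}^{\infty}u^{-r}|H_{m}(u)|\,du<\infty$ for all $r>0$, and you assert that this follows from ``smoothness of $\eta_{1}^{\alpha}$ and the estimates in (\ref{eq-2.1})''. The scaling identity is fine, and $H_{m}$ is indeed a fixed combination of $u^{k}(\eta_{1}^{\alpha})^{(k)}(u)$, $k\leq m$; but (\ref{eq-2.1}) controls only $\eta_{1}^{\alpha}$ itself, and pointwise bounds on a function never control its derivatives. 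The decay $|(\eta_{1}^{\alpha})^{(k)}(u)|\leq C u^{-1-\alpha-k}$ at infinity and the rapid vanishing of these derivatives as $u\to 0^{+}$ are genuine additional facts about the one-sided stable density (true, via differentiable asymptotic expansions of stable densities, but nowhere in this paper and not consequences of (\ref{eq-2.1})). Without proving or citing them, your inequality $\int_{0}^{\infty}|t^{m}\partial_{t}^{m}\eta^{\alpha}_{t}(s)|\cdot(\cdots)\,ds$ cannot be estimated, so the proof as written has a gap precisely where it departs from Proposition \ref{pro2}.

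The paper's (omitted) proof avoids this issue, and that is why Proposition \ref{pro2.5} was established in Section \ref{sec-2.1}: substituting $s=t^{1/\alpha}u$ in (\ref{p-1}) gives $K^{\mathcal{L}}_{\alpha,t}(x,y)=\int_{0}^{\infty}\eta^{\alpha}_{1}(u)K^{\mathcal{L}}_{t^{1/\alpha}u}(x,y)\,du$, and since $t\partial_{t}K^{\mathcal{L}}_{t^{1/\alpha}u}=\alpha^{-1}\big(s\partial_{s}K^{\mathcal{L}}_{s}\big)\big|_{s=t^{1/\alpha}u}$, the operator $t^{m}\partial_{t}^{m}$ becomes a fixed linear combination of $s^{j}\partial_{s}^{j}K^{\mathcal{L}}_{s}$, $1\leq j\leq m$, with no derivatives ever falling on $\eta^{\alpha}_{1}$ (this is also how Proposition \ref{pro1-1-1} is said to follow from Proposition \ref{pro2.5} and (\ref{p-1})). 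One then uses Proposition \ref{pro2.5}(ii) as the H\"older ingredient in place of Lemma \ref{lem2}, and the two substitutions $s=t^{1/\alpha}u$ and $\tau=s/t^{1/\alpha}$ of Proposition \ref{pro2} go through verbatim. So either supply the derivative estimates for $\eta^{\alpha}_{1}$ from the stable-density literature, or switch to this transfer of derivatives. A minor further point: your justification that $|y-z|<C\rho(y)$ together with Lemma \ref{lem2.5}(i) gives $\rho(x)\sim\rho(y)$ is not correct (that hypothesis yields $\rho(y)\sim\rho(z)$; $x$ may be far from $y$); the $\rho(y)\leftrightarrow\rho(x)$ interchange is the same silent adjustment the paper itself makes in Proposition \ref{pro2}, so it is inherited rather than new, but it should not be argued this way.
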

\begin{proof}
The proof is similar to that of Proposition \ref{pro2}, so we omit the details.
\end{proof}

\begin{proposition}\label{pro2-2}
Suppose that $V\in B_{q}$ for some $q>n$. For $0<\delta'<\delta_{1}:=1-n/q$, and $|y-z|<\min\{C\rho(y),|x-y|/4\}$, there exists a constant $C>0$ such that
$$\Big|\Big(\widetilde{D}^{\mathcal{L}}_{\alpha,t}(x,y)-\widetilde{D}_{\alpha,t}(x-y)\Big)-
\Big(\widetilde{D}^{\mathcal{L}}_{\alpha,t}(x,z)-\widetilde{D}_{\alpha,t}(x-z)\Big)\Big|\leq C\Big(\frac{|y-z|}{\rho(x)}\Big)^{\delta'}
\frac{t}{(\sqrt{t^{1/\alpha}}+|x-y|)^{n+2\alpha}}.$$
\end{proposition}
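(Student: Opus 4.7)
The plan is to mimic the proof of Proposition \ref{pro2}, with Lemma \ref{lem2} replaced by an analog for the gradient kernel. First I would apply the subordinative formula (\ref{p-1}) (bringing $\nabla_{x}$ inside the $s$-integral) and factor out $t^{1/(2\alpha)}$ to reduce the left-hand side of the claim to
$$t^{1/(2\alpha)}\int_{0}^{\infty}\eta^{\alpha}_{t}(s)\,\Phi_{s}(x,y,z)\,ds,$$
where
$$\Phi_{s}(x,y,z):=\bigl(\nabla_{x}K^{\mathcal{L}}_{s}(x,y)-\nabla_{x}K_{s}(x-y)\bigr)-\bigl(\nabla_{x}K^{\mathcal{L}}_{s}(x,z)-\nabla_{x}K_{s}(x-z)\bigr).$$

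The main technical step is to establish the pointwise bound
$$|\Phi_{s}(x,y,z)|\leq C\Big(\frac{|y-z|}{\rho(x)}\Big)^{\delta'}s^{-(n+1)/2}e^{-c|x-y|^{2}/s},$$
which is the $\nabla_{x}$-counterpart of Lemma \ref{lem2} (regularity in the second slot rather than in the gradient slot). I would derive this along the same lines as Lemma \ref{lem2-2}: expand $\nabla_{x}\bigl(K_{s}^{\mathcal{L}}(x,\cdot)-K_{s}(x-\cdot)\bigr)$ via the perturbation identity (\ref{p-2}), take the difference inside the $y$-slot, and split the resulting $(r,w)$-integral into short-time ($r<|y-z|^{2}$) and long-time pieces, with the long-time part further decomposed according to whether $|w-y|<2|y-z|$ or $|w-y|\geq 2|y-z|$. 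The $y$-smoothness bound on $K_{r}^{\mathcal{L}}(w,y)-K_{r}^{\mathcal{L}}(w,z)$ furnished by Lemma \ref{pro2.2}(ii), the $\nabla_{x}$ bound on $K_{s-r}(x-w)$, and the $V$-integrability estimates of Lemmas \ref{lem2.4}--\ref{lem2.6} combine exactly as in Lemma \ref{lem2-2}; interpolation with the trivial estimate obtained by applying Lemma \ref{lem1-1} at each endpoint (using $|x-y|\sim|x-z|$, valid because $|y-z|<|x-y|/4$) then yields any exponent $\delta'<\delta_{1}$.

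Once the pointwise bound on $\Phi_{s}$ is in hand, the $s$-integration is handled exactly as in Proposition \ref{pro2}. Using $\eta^{\alpha}_{t}(s)\leq t/s^{1+\alpha}$ together with the substitutions $s=t^{1/\alpha}u$ and then $r^{2}=|x-y|^{2}/(t^{1/\alpha}u)$, one obtains a bound of size $(|y-z|/\rho(x))^{\delta'}\cdot t^{1+1/(2\alpha)}/|x-y|^{n+2\alpha+1}$, which dominates $t/(|x-y|+\sqrt{t^{1/\alpha}})^{n+2\alpha}$ in the regime $\sqrt{t^{1/\alpha}}\leq|x-y|$ (since the extra factor $\sqrt{t^{1/\alpha}}/|x-y|$ is then $\leq 1$). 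In the complementary regime, the scaling identity $\eta^{\alpha}_{t}(s)=t^{-1/\alpha}\eta^{\alpha}_{1}(s/t^{1/\alpha})$ combined with the integrability $\int_{0}^{\infty}\tau^{-(n+1)/2}\eta^{\alpha}_{1}(\tau)\,d\tau<\infty$ guaranteed by (\ref{eq-2.1}) yields the bound $(|y-z|/\rho(x))^{\delta'}t^{-n/(2\alpha)}$, as required. Merging the two cases produces the uniform bound with denominator $(\sqrt{t^{1/\alpha}}+|x-y|)^{n+2\alpha}$.

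The principal obstacle is the proof of the regularity estimate for $\Phi_{s}$ in the $y$-variable: the existing Lemma \ref{lem2-2} treats regularity in the variable where the gradient acts, so the analog for the non-differentiated slot that is needed here must be carried out ab initio via the perturbation formula, and this step absorbs essentially all of the technical work. Everything after that is a routine adaptation of the subordination calculation already performed in Proposition \ref{pro2}.
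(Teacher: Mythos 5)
Your proposal is correct and follows the same two-step route as the paper's proof: subordination via (\ref{p-1}) to reduce everything to a pointwise H\"older-type bound on the second difference $\Phi_{s}$ of the gradient heat kernels, followed by exactly the two-regime $s$-integration you describe (the bound $\eta^{\alpha}_{t}(s)\leq t/s^{1+\alpha}$ with the substitutions $s=t^{1/\alpha}u$ and $r=|x-y|^{2}/(t^{1/\alpha}u)$ when $\sqrt{t^{1/\alpha}}\leq|x-y|$, and the scaling identity together with $\int_{0}^{\infty}\tau^{-(n+1)/2}\eta^{\alpha}_{1}(\tau)\,d\tau<\infty$ in the complementary regime), which reproduces the paper's estimates (\ref{eq-3.1}) and (\ref{eq-3.2}) and their merging. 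The only divergence is in how the pointwise bound on $\Phi_{s}$ is obtained: the paper simply quotes Lemma \ref{lem2-2} (silently transferring the increment to the second slot and writing $\rho(x)$ where the lemma has $\rho(y)$), whereas you observe, correctly, that Lemma \ref{lem2-2} as stated controls increments in the variable carrying the gradient, and you propose to establish the second-slot analog ab initio by rerunning the perturbation identity (\ref{p-2}), using Lemma \ref{pro2.2}(ii) (via the symmetry of $K^{\mathcal{L}}_{s}$) for the $y$-smoothness of $K^{\mathcal{L}}_{r}(w,\cdot)$, the Gaussian bounds on $\nabla_{x}K_{s-r}$, Lemmas \ref{lem2.4}--\ref{lem2.6}, and interpolation with the crude bound from Lemma \ref{lem1-1} (or Lemma \ref{le3.8}), exactly as in the proof of Lemma \ref{lem2-2}. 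That is precisely the step the paper compresses into a citation, so your plan is, if anything, more careful; to finish it you should also record how $\rho(y)$ is traded for $\rho(x)$ (via Lemma \ref{lem2.5}, absorbing the comparison factor into the Gaussian, as in the proofs of Lemmas \ref{lem1-1} and \ref{lem2-2}). One slip of wording only: your bound $(|y-z|/\rho(x))^{\delta'}t^{1+1/(2\alpha)}/|x-y|^{n+2\alpha+1}$ is \emph{dominated by} (not ``dominates'') the target quantity when $\sqrt{t^{1/\alpha}}\leq|x-y|$, as your own parenthetical about the factor $\sqrt{t^{1/\alpha}}/|x-y|\leq1$ makes clear.
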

\begin{proof}
By the subordinative formula (\ref{p-1}), we can use Lemma \ref{lem2-2} to get
\begin{eqnarray*}
  &&\Big|\Big(\widetilde{D}^{\mathcal{L}}_{\alpha,t}(x,y)-\widetilde{D}_{\alpha,t}(x-y)\Big)-
  \Big(\widetilde{D}^{\mathcal{L}}_{\alpha,t}(x,z)-\widetilde{D}_{\alpha,t}(x-z)\Big)\Big|\\
  &&\leq t^{1/(2\alpha)}\int^{\infty}_{0}\eta^{\alpha}_{t}(s)
  \Big|\Big(\nabla_{x}K^{\mathcal{L}}_{s}(x,y)-\nabla_{x}K_{s}(x-y)\Big)-\nabla_{x}\Big(K^{\mathcal{L}}_{s}(x,z)-K_{s}(x-z)\Big)\Big|ds  \\
   &&\leq  Ct^{1/(2\alpha)}\int^{\infty}_{0}\eta^{\alpha}_{t}(s)\Big(\frac{|y-z|}{\rho(x)}\Big)^{\delta'}s^{-(n+1)/2}e^{-c|x-y|^{2}/s}ds.
\end{eqnarray*}
On the one hand, taking $s=t^{1/\alpha}u$, we obtain
\begin{eqnarray*}
&&\Big|\Big(\widetilde{D}^{\mathcal{L}}_{\alpha,t}(x,y)-\widetilde{D}_{\alpha,t}(x-y)\Big)-
\Big(\widetilde{D}^{\mathcal{L}}_{\alpha,t}(x,z)-\widetilde{D}_{\alpha,t}(x-z)\Big)\Big|\\ &&\leq
Ct^{1/(2\alpha)}\int^{\infty}_{0}\frac{t}{s^{1+\alpha}}\Big(\frac{|y-z|}{\rho(x)}\Big)^{\delta'}s^{-(n+1)/2}e^{-c|x-y|^{2}/s}ds  \\
   &&\leq  Ct^{1/(2\alpha)}\int^{\infty}_{0}\frac{t}{(t^{1/\alpha}u)^{1+\alpha}}\Big(\frac{|y-z|}{\rho(x)}\Big)^{\delta'}(t^{1/\alpha}u)^{-(n+1)/2}
   e^{-c|x-y|^{2}/(t^{1/\alpha}u)}t^{1/\alpha}du \\
   &&\leq  C\Big(\frac{|y-z|}{\rho(x)}\Big)^{\delta'}t^{-n/(2\alpha)}\int^{\infty}_{0}e^{-c|x-y|^{2}/(t^{1/\alpha}u)}u^{-1-\alpha-(n+1)/2}du.
\end{eqnarray*}
Let ${|x-y|^{2}}/{(t^{1/\alpha}u)}=r$. It holds
\begin{eqnarray}\label{eq-3.1}
 &&\Big|\Big(\widetilde{D}^{\mathcal{L}}_{\alpha,t}(x,y)-\widetilde{D}_{\alpha,t}(x-y)\Big)-
 \Big(\widetilde{D}^{\mathcal{L}}_{\alpha,t}(x,z)-\widetilde{D}_{\alpha,t}(x-z)\Big)\Big|\\
   &&\leq  C\Big(\frac{|y-z|}{\rho(x)}\Big)^{\delta'}t^{-n/(2\alpha)}\int^{\infty}_{0}e^{-r}\Big(\frac{|x-y|^{2}}{t^{1/\alpha}r}\Big)^{-1-\alpha-(n+1)/2}
 \frac{|x-y|^{2}}{t^{1/\alpha}r^{2}}dr \nonumber\\
   &&\leq  C\Big(\frac{|y-z|}{\rho(x)}\Big)^{\delta'}\frac{t^{1+1/(2\alpha)}}{|x-y|^{2\alpha+n+1}}.\nonumber
\end{eqnarray}
On the other hand, letting $s/t^{1/\alpha}=\tau$, we obtain
\begin{eqnarray}\label{eq-3.2}
 &&\Big|\Big(\widetilde{D}^{\mathcal{L}}_{\alpha,t}(x,y)-\widetilde{D}_{\alpha,t}(x-y)\Big)-
 \Big(\widetilde{D}^{\mathcal{L}}_{\alpha,t}(x,z)-\widetilde{D}_{\alpha,t}(x-z)\Big)\Big| \\
  &&\leq  Ct^{1/(2\alpha)}\int^{\infty}_{0}\frac{1}{t^{1/\alpha}}\eta^{\alpha}_{1}(s/t^{1/\alpha})\Big(\frac{|y-z|}{\rho(x)}\Big)^{\delta'}s^{-(n+1)/2}ds\nonumber  \\
   &&\leq  Ct^{1/(2\alpha)}\Big(\frac{|y-z|}{\rho(x)}\Big)^{\delta'}\int^{\infty}_{0}\eta^{\alpha}_{1}(\tau)(t^{1/\alpha}\tau)^{-(n+1)/2}d\tau\nonumber \\
   &&\leq  C\Big(\frac{|y-z|}{\rho(x)}\Big)^{\delta'}t^{-n/(2\alpha)}.\nonumber
\end{eqnarray}
It is easy to see that the desired estimate follows from (\ref{eq-3.1})\ and \ (\ref{eq-3.2}). The proof of Proposition \ref{pro2-2} is finished.

\end{proof}

\section{$BMO^{\gamma}_{\mathcal{L}}$-boundedness via $T1$ theorem}\label{sec-4}
\subsection{Maximal operators for the fractional heat semigroup}\label{sec-4-1}
\begin{definition}\label{defc-1}
Let $0<\gamma\leq 1$. The Campanato type space $BMO^{\gamma}_{\mathcal{L},L^{\infty}((0,\infty),dt)}(\mathbb{R}^{n})$ is defined as the set of all  locally integrable functions $f$ satisfying
$$\|f\|_{BMO^{\gamma}_{\mathcal{L},L^{\infty}((0,\infty),dt)}}:=\sup_{B\subset\mathbb{R}^{n}}
\Bigg\{\frac{1}{|B|^{1+\gamma/n}}\int_{B}\|f(x,t)-f(B,V)\|_{L^{\infty}((0,\infty),dt)}dx\Bigg\}<\infty.$$
\end{definition}

 To prove that $M^{\alpha}_{\mathcal{L}}$ is bounded from $BMO_{\mathcal{L}}^{\gamma}(\mathbb R^{n})$ into itself, we give a vector-valued interpretation of the operator and apply Lemma \ref{lem4.1}. Indeed, it is clear that $M^{\alpha}_{\mathcal{L}}f=\|e^{-t\mathcal{L}^{\alpha}}f\|_{L^{\infty}((0,\infty),dt)}$. Hence, it is enough to show that the operator $\Lambda(f):=\{e^{-t\mathcal{L}^{\alpha}}f\}_{t>0}$ is bounded from $BMO^{\gamma}_{\mathcal{L}}$ into $BMO_{\mathcal{L},L^{\infty}((0,\infty),dt)}^{\gamma}$.

By the spectral theorem, $\Lambda$ is bounded from $L^{2}(\mathbb{R}^{n})$ into $L^{2}_{L^{\infty}((0,\infty),dt)}(\mathbb{R}^{n})$. The desired result can be then deduced from the following theorem.

\begin{theorem}\label{the3}
Assume that the potential $V\in B_{q}$ with $q>n/2$. Let $x,y,z\in\mathbb{R}^{n}$.
\item{\rm (i)} For any $N>0$, there exists a constant $C_{N}$ such that
 $$\Big\|K^{\mathcal{L}}_{\alpha,t}(x,y)\Big\|_{L^{\infty}((0,\infty),dt)}\leq \frac{C_{N}}{|x-y|^{n}}\Big(1+\frac{|x-y|}{\rho(x)}+\frac{|x-y|}{\rho(y)}\Big)^{-N}.$$
\item{\rm (ii)} For $|x-y|>2|y-z|$ and any $0<\delta<\min\{2\alpha,\delta_{0}\}$, there exists a constant $C>0$ such that
    \begin{multline}\label{t-1}
       \Big\|K^{\mathcal{L}}_{\alpha,t}(x,y)-K^{\mathcal{L}}_{\alpha,t}(x,z)\Big\|_{L^{\infty}((0,\infty),dt)}+
    \Big\|K^{\mathcal{L}}_{\alpha,t}(y,x)-K^{\mathcal{L}}_{\alpha,t}(z,x)\Big\|_{L^{\infty}((0,\infty),dt)}\\
     \quad\leq \frac{C|y-z|^{\delta}}{|x-y|^{n+\delta}}.
    \end{multline}
\item{\rm (iii)} There exists a constant $C$ such that for every ball $B=B(x,r)$ with $0<r\leq \rho(x)/2$,
$$\log\Big(\frac{\rho(x)}{r}\Big)\frac{1}{|B|}\int_{B}\Big\|e^{-t\mathcal{L}^{\alpha}}1(y)-(e^{-t\mathcal{L}^{\alpha}}1)_{B}\Big\|_{L^{\infty}((0,\infty),dt)}dy\leq C,$$
and, if $\gamma<\min\{2\alpha,1,\delta_{0}\}$ then
$$\Big(\frac{\rho(x)}{r}\Big)^{\gamma}\frac{1}{|B|}\int_{B}\Big\|e^{-t\mathcal{L}^{\alpha}}1(y)-(e^{-t\mathcal{L}^{\alpha}}1)_{B}\Big\|_{L^{\infty}((0,\infty),dt)}dy\leq C.$$
\end{theorem}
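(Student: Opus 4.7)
The plan is to deduce (i) and (ii) from the pointwise and Hölder bounds of Proposition \ref{pro2.51} by taking a supremum in $t$, and to establish (iii) via the perturbation-type regularity of Propositions \ref{pro1} and \ref{pro2} together with a near/far decomposition of the $z$-integral.

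For (i), I set $s := \sqrt{t^{1/\alpha}}$, so that Proposition \ref{pro2.51}(i) bounds $|K^{\mathcal{L}}_{\alpha,t}(x,y)|$ by $C_N s^{2\alpha}(s + |x-y|)^{-(n+2\alpha)}(1 + s/\rho(x)+s/\rho(y))^{-N}$. Using $s^{2\alpha} \leq (s+|x-y|)^{2\alpha}$ to reduce the front factor to $(s+|x-y|)^{-n}$ and splitting into $s \leq |x-y|$ versus $s > |x-y|$, the sup in $s$ is computed by bounding the auxiliary factor pointwise and, in the regime $s > |x-y|$, trading a power of $s/|x-y|$ against $\rho/|x-y|$; this produces the bound $C_N |x-y|^{-n}(1+|x-y|/\rho(x)+|x-y|/\rho(y))^{-N}$. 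For (ii), I use Proposition \ref{pro2.51}(ii) on $|y-z| \leq s$, which yields a Hölder factor $(|y-z|/s)^\delta$ and hence an integrand of the form $s^{2\alpha-\delta}(s+|x-y|)^{-(n+2\alpha)}$ whose $t$-supremum equals $|x-y|^{-(n+\delta)}$ by the same case analysis. When $|y-z| > s$, the triangle inequality with (i) applied at $(x,y)$ and $(x,z)$ (noting $|x-z| \sim |x-y|$ since $|y-z| \leq |x-y|/2$) combined with $|y-z|^{2\alpha-\delta} \leq |x-y|^{2\alpha-\delta}$ closes the estimate. The symmetric version \eqref{t-1} with first arguments varying follows from self-adjointness of $\mathcal{L}^\alpha$.

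For (iii), since the classical fractional heat kernel integrates to $1$, writing $F(y,t) := e^{-t\mathcal{L}^\alpha}1(y) - 1 = \int (K^{\mathcal{L}}_{\alpha,t}(y,z) - K_{\alpha,t}(y-z))\,dz$ gives $e^{-t\mathcal{L}^\alpha}1(y) - (e^{-t\mathcal{L}^\alpha}1)_B = |B|^{-1}\int_B(F(y,t) - F(u,t))\,du$, so it suffices to bound $\|F(y,\cdot) - F(u,\cdot)\|_{L^\infty((0,\infty),dt)}$ uniformly in $y,u \in B$. I split the $z$-integration at $B^\ast := B(x_0, 16r)$. For $z \in (B^\ast)^c$, one has $|z-y| > 15r > 4|y-u|$, and $|y-u| \leq 2r \leq \rho(x_0) \sim \rho(y)$ by Lemma \ref{lem2.5}, so after using self-adjointness to swap slots, Proposition \ref{pro2} applies and yields a bound $(|y-u|/\rho(z))^\delta\, t/(s+|z-y|)^{n+2\alpha}$; Lemma \ref{lem2.5}(i) replaces $\rho(z)^{-1}$ by $\rho(y)^{-1}$ modulo polynomial growth in $|z-y|/\rho(y)$, and integrating in $z$ and optimizing over $t$ contributes at most $C(r/\rho(x_0))^\delta$ for any $\delta < \min\{2\alpha,\delta_0\}$. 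On $B^\ast$, the triangle inequality reduces matters to the single-kernel estimate from Proposition \ref{pro1} in the combined form $|K^{\mathcal{L}}_{\alpha,t}(y,z) - K_{\alpha,t}(y-z)| \leq C \rho(y)^{-\delta_0}\, t/(|y-z|+s)^{n+2\alpha-\delta_0}$; integrating over $|y-z| \lesssim r$ and optimizing in $t$ gives a contribution $\lesssim (r/\rho(x_0))^{\min\{\delta_0,2\alpha\}}$, with a logarithm $\log(\rho(x_0)/r)$ appearing at the critical value $\delta_0 = 2\alpha$. For $s \geq \rho(x_0)$, the global decay $|F(y,t)| \leq 1 + C_N(1+s/\rho(x_0))^{-N}$ obtained from (i) beats any polynomial in $r/\rho(x_0)$.

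The main obstacle is precisely this borderline logarithmic loss, which dictates the two-tier form of (iii): the first estimate carries the $\log(\rho(x_0)/r)$ prefactor to absorb any endpoint logarithm and so matches Theorem \ref{the2}, while the second polynomial estimate requires $\gamma < \min\{2\alpha,1,\delta_0\}$ precisely so that one may pick a strictly larger $\delta$ in the applications of Propositions \ref{pro1} and \ref{pro2}, ensuring that $(\rho(x_0)/r)^\gamma \cdot (r/\rho(x_0))^\delta$ stays bounded. A secondary technical point is the comparison of $\rho(z)$ with $\rho(y)$ on the far region; the polynomial growth $(1+|z-y|/\rho(y))^{k_0}$ from Lemma \ref{lem2.5}(i) is reabsorbed by the spatial decay $(s+|z-y|)^{-(n+2\alpha)}$ provided one works strictly below the sharp exponent $\min\{2\alpha,\delta_0\}$.
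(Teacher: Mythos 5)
Your parts (i) and (ii) are correct and essentially the paper's own argument: take the two bounds packaged in Proposition \ref{pro2.51}, split at $\sqrt{t^{1/\alpha}}=|x-y|$, and trade powers of $\sqrt{t^{1/\alpha}}/|x-y|$ against the auxiliary factor; your separate case $|y-z|>\sqrt{t^{1/\alpha}}$, handled by keeping $t=(\sqrt{t^{1/\alpha}})^{2\alpha}\leq|y-z|^{2\alpha}$ and using $\delta<2\alpha$, is fine (the paper glosses this point). The difficulties are in (iii). First, the regime $\sqrt{t^{1/\alpha}}\geq\rho(x_{0})$: the bound you invoke, $|F(y,t)|\leq 1+C_{N}(1+\sqrt{t^{1/\alpha}}/\rho(x_{0}))^{-N}$, is only $O(1)$ --- at $\sqrt{t^{1/\alpha}}\sim\rho(x_{0})$ it yields a fixed constant independent of $r$ --- whereas the $T1$ hypotheses force $\sup_{t>0}|e^{-t\mathcal{L}^{\alpha}}1(y)-e^{-t\mathcal{L}^{\alpha}}1(u)|\lesssim(r/\rho(x_{0}))^{\delta}$ with $\delta>\gamma$ for the second estimate (and at least decay beating $\log(\rho(x_{0})/r)$ for the first). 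A constant does not ``beat any polynomial in $r/\rho(x_{0})$''. The correct tool is the H\"older estimate of Proposition \ref{pro2.51}(ii) with $h=y-u$, $|h|\leq 2r\leq\sqrt{t^{1/\alpha}}$: integrating it in $w$ gives $|e^{-t\mathcal{L}^{\alpha}}1(y)-e^{-t\mathcal{L}^{\alpha}}1(u)|\leq C(|y-u|/\sqrt{t^{1/\alpha}})^{\delta}\leq C(r/\rho(x_{0}))^{\delta}$, which is exactly how the paper disposes of $\sqrt{t^{1/\alpha}}\geq\rho(x_{0})$.

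Second, your far region. Applying Proposition \ref{pro2} on all of $B(x_{0},16r)^{c}$ produces the factor $\rho(z)^{-\delta}$ at the far integration point, and converting it to $\rho(x_{0})^{-\delta}$ via Lemma \ref{lem2.5}(i) costs $(1+|z-y|/\rho(y))^{k_{0}\delta}$. The tail of the $z$-integral then behaves like $\int_{\rho(y)}^{\infty}(\sigma/\rho(y))^{k_{0}\delta}\,t\,\sigma^{-1-2\alpha}\,d\sigma$, which converges only when $k_{0}\delta<2\alpha$. Since $k_{0}\geq1$ is whatever Shen's lemma provides and is not at your disposal, the claim ``for any $\delta<\min\{2\alpha,\delta_{0}\}$'' is not justified, and consequently the second estimate of (iii) cannot be reached for the full stated range $\gamma<\min\{2\alpha,1,\delta_{0}\}$, where one must pick $\delta>\gamma$. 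The paper avoids any comparison of $\rho$ at far points by cutting the exterior at $c\rho(y)$ rather than at $16r$: on $|w-y|>c\rho(y)$ it uses the Calder\'on--Zygmund bound \eqref{t-1} of part (ii), valid for $K^{\mathcal{L}}_{\alpha,t}$ and for the classical kernel alike, which integrates to $(|y-u|/\rho(y))^{\delta}$ with no $\rho(w)$ appearing; Proposition \ref{pro2} is used only on $4|y-u|<|w-y|<c\rho(y)$, where $\rho(w)\sim\rho(y)$. Restructure your exterior region this way. Two minor remarks: a more careful computation of your near-region term gives $C(r/\rho(x_{0}))^{\delta_{0}}$ uniformly in $t$ with no genuine logarithm even at $\delta_{0}=2\alpha$; and the $\log(\rho(x_{0})/r)$ prefactor in the first estimate of (iii) is part of the hypothesis to be verified --- it strengthens the requirement rather than absorbing losses --- though since your bounds are power-type this framing slip is harmless.
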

\begin{proof}
For (i), from (i) of Proposition \ref{pro2.51}, we can get
$$\Big|K^{\mathcal{L}}_{\alpha,t}(x,y)\Big|\leq C_{N} \min\Bigg\{\frac{t^{1+N/\alpha}}{|x-y|^{n+2\alpha+2N}},\ t^{-n/(2\alpha)}\Bigg\}\Big(1+\frac{\sqrt{t^{1/\alpha}}}{\rho(x)}+\frac{\sqrt{t^{1/\alpha}}}{\rho(y)}\Big)^{-N}.$$
If $\sqrt{t^{1/\alpha}}>|x-y|$, then
\begin{eqnarray*}
  \Big|K^{\mathcal{L}}_{\alpha,t}(x,y)\Big| &\leq& \frac{C_{N}}{(\sqrt{t^{1/\alpha}})^{n}}\Big(1+\frac{|x-y|}{\rho(x)}+\frac{|x-y|}{\rho(y)}\Big)^{-N}  \\
   &\leq&\frac{C}{|x-y|^{n}}\Big(1+\frac{|x-y|}{\rho(x)}+\frac{|x-y|}{\rho(y)}\Big)^{-N}.
\end{eqnarray*}
If $\sqrt{t^{1/\alpha}}\leq |x-y|$, we obtain
\begin{eqnarray*}
  \Big|K^{\mathcal{L}}_{\alpha,t}(x,y)\Big| &\leq& \frac{C_{N}t^{1+N/\alpha}}{|x-y|^{n+2\alpha+2N}}\Big(\frac{\sqrt{t^{1/\alpha}}}{|x-y|}\Big)^{-N}
  \Big(\frac{|x-y|}{\sqrt{t^{1/\alpha}}}+\frac{|x-y|}{\rho(x)}+\frac{|x-y|}{\rho(y)}\Big)^{-N} \\
  &\leq& \frac{C_{N}(\sqrt{t^{1/\alpha}})^{2\alpha+N}}{|x-y|^{n+2\alpha+N}}\Big(1+\frac{|x-y|}{\rho(x)}+\frac{|x-y|}{\rho(y)}\Big)^{-N} \\
  &\leq& \frac{C_{N}}{|x-y|^{n}}\Big(1+\frac{|x-y|}{\rho(x)}+\frac{|x-y|}{\rho(y)}\Big)^{-N}.
\end{eqnarray*}

For (ii), from (ii) of Proposition \ref{pro2.51}, we obtain
$$\Big|K^{\mathcal{L}}_{\alpha,t}(x,y)-K^{\mathcal{L}}_{\alpha,t}(x,z)\Big|\leq C_{N}\min\Bigg\{\frac{t^{1+N/\alpha}|y-z|^{\delta}}{|x-y|^{2\alpha+n+2N+\delta}},\ t^{-n/(2\alpha)}\Bigg\}\Big(\frac{|y-z|}{\sqrt{t^{1/\alpha}}}\Big)^{\delta}.$$
If $\sqrt{t^{1/\alpha}}>|x-y|$, then
\begin{eqnarray*}
 \Big|K^{\mathcal{L}}_{\alpha,t}(x,y)-K^{\mathcal{L}}_{\alpha,t}(x,z)\Big|  &\leq& \frac{C_{N}}{|x-y|^{n}}\Big(\frac{|y-z|}{|x-y|}\Big)^{\delta}
   \leq \frac{C|y-z|^{\delta}}{|x-y|^{n+\delta}}.
\end{eqnarray*}
If $|x-y|>\sqrt{t^{1/\alpha}}$, we can also get
\begin{eqnarray*}
 \Big|K^{\mathcal{L}}_{\alpha,t}(x,y)-K^{\mathcal{L}}_{\alpha,t}(x,z)\Big|  &\leq& \frac{C_{N}|x-y|^{2\alpha+2N}}{|x-y|^{2\alpha+n+2N+\delta}}|y-z|^{\delta}
 \leq \frac{C_{N}|y-z|^{\delta}}{|x-y|^{n+\delta}}.
\end{eqnarray*}
The symmetry of the kernel $K^{L}_{\alpha,t}(\cdot,\cdot)$ gives the conclusion of (ii).

For (iii), letting $B=B(x,r)$ with $0<r\leq \rho(x)/2$, the triangle inequality gives
$$\Big\|e^{-t\mathcal{L}^{\alpha}}1(y)-(e^{-t\mathcal{L}^{\alpha}}1)_{B}\Big\|_{L^{\infty}((0,\infty),dt)}\leq \frac{1}{|B|}\int_{B}\Big\|e^{-t\mathcal{L}^{\alpha}}1(y)-e^{-t\mathcal{L}^{\alpha}}1(z)\Big\|_{L^{\infty}((0,\infty),dt)}dz.$$
We estimate $\|e^{-t\mathcal{L}^{\alpha}}1(y)-e^{-t\mathcal{L}^{\alpha}}1(z)\|_{L^{\infty}((0,\infty),dt)}$. Because $y,z\in B$, $\rho(y)\sim \rho(z)\sim\rho(x).$
By Proposition \ref{pro1}, we split
$|e^{-t\mathcal{L}^{\alpha}}1(y)-e^{-t\mathcal{L}^{\alpha}}1(z)|\leq S_{1}+S_{2}$,
where
\begin{equation*}
\left\{  \begin{aligned}
S_{1}:= \int_{\mathbb{R}^{n}}\Big|K^{\mathcal{L}}_{\alpha,t}(y,w)-K_{\alpha,t}(y,w)\Big|dw;\\
S_{2}:=\int_{\mathbb{R}^{n}}\Big|K^{\mathcal{L}}_{\alpha,t}(z,w)-K_{\alpha,t}(z,w)\Big|dw.
\end{aligned}\right.
\end{equation*}

For $S_{1}$, if $|y-w|\leq\sqrt{t^{1/\alpha}}$, we obtain
\begin{eqnarray*}
  S_{1} \leq\int_{\mathbb{R}^{n}}\Big(\frac{\sqrt{t^{1/\alpha}}}{\rho(x)}\Big)^{\delta_{0}}\frac{t}{(\sqrt{t^{1/\alpha}}+|y-w|)^{n+2\alpha}}dw
   \leq \Big(\frac{\sqrt{t^{1/\alpha}}}{\rho(x)}\Big)^{\delta_{0}}.
\end{eqnarray*}
If $|y-w|>\sqrt{t^{1/\alpha}}$, we can see that
\begin{eqnarray*}
   S_{1}&\leq&\int_{\mathbb{R}^{n}}\Big(\frac{|y-w|}{\rho(x)}\Big)^{\delta_{0}}\frac{t}{(\sqrt{t^{1/\alpha}}+|y-w|)^{n+2\alpha}}dw  \\
   &\leq&\frac{1}{\rho(x)^{\delta_{0}}}\int^{\infty}_{0}\frac{|y-w|^{\delta_{0}+n-1}t}{(|y-w|+\sqrt{t^{1/\alpha}})^{n+2\alpha}}dw  \\
   &\leq&\Big(\frac{\sqrt{t^{1/\alpha}}}{\rho(x)}\Big)^{\delta_{0}}\int^{\infty}_{0}u^{\delta_{0}+n-1}(1+u)^{-n-2\alpha}du  \\
   &\leq& \Big(\frac{\sqrt{t^{1/\alpha}}}{\rho(x)}\Big)^{\delta_{0}},
\end{eqnarray*}
where we have chosen $\delta_{0}<2\alpha$ since $\delta_{0}=2-n/q$, $q>n/2$.
The proof of the term $S_{2}$ is similar to that of the term $S_{1}$, so we omit it. Then we can get
$$\Big|e^{-t\mathcal{L}^{\alpha}}1(y)-e^{-t\mathcal{L}^{\alpha}}1(z)\Big|\leq C\Big(\frac{\sqrt{t^{1/\alpha}}}{\rho(x)}\Big)^{\delta_{0}},$$
which shows that if $\sqrt{t^{1/\alpha}}\leq 2r$,
$$\Big|e^{-t\mathcal{L}^{\alpha}}1(y)-e^{-t\mathcal{L}^{\alpha}}1(z)\Big|\leq C\Big(\frac{r}{\rho(x)}\Big)^{\delta_{0}}.$$
If $\sqrt{t^{1/\alpha}}>2r$, then $|y-z|\leq 2r<\sqrt{t^{1/\alpha}}$. Hence, Proposition \ref{pro2.51} implies that for $0<\delta<\delta_{0}$,
\begin{multline}\label{2}
  \Big|e^{-t\mathcal{L}^{\alpha}}1(y)-e^{-t\mathcal{L}^{\alpha}}1(z)\Big| \leq \int_{\mathbb{R}^{n}}\Big|K^{\mathcal{L}}_{\alpha,t}(y,w)-K^{\mathcal{L}}_{\alpha,t}(z,w)\Big|dw    \\
   \leq \int_{\mathbb{R}^{n}}\Big(\frac{|y-z|}{\sqrt{t^{1/\alpha}}}\Big)^{\delta}\frac{t}{(\sqrt{t^{1/\alpha}}+|y-w|)^{n+2\alpha}}dw \leq C\Big(\frac{|y-z|}{\sqrt{t^{1/\alpha}}}\Big)^{\delta}\leq C \Big(\frac{r}{\sqrt{t^{1/\alpha}}}\Big)^{\delta}.
\end{multline}

Therefore, if $\sqrt{t^{1/\alpha}}>\rho(x)$, (\ref{2}) gives
$$|e^{-t\mathcal{L}^{\alpha}}1(y)-e^{-t\mathcal{L}^{\alpha}}1(z)|\leq C\Big(\frac{r}{\rho(x)}\Big)^{\delta}.$$
When $2r<\sqrt{t^{1/\alpha}}<\rho(x)$, we have
$  |e^{-t\mathcal{L}^{\alpha}}1(y)-e^{-t\mathcal{L}^{\alpha}}1(z)|  =  I+II+III,$
where
\begin{equation*}
 \left\{ \begin{aligned}
 I&:=\int_{|w-y|>c\rho(y)>4|y-z|}\Big|\Big(K^{\mathcal{L}}_{\alpha,t}(y,w)-K_{\alpha,t}(y,w)\Big)-\Big(
   K^{\mathcal{L}}_{\alpha,t}(z,w)-K_{\alpha,t}(z,w)\Big)\Big|dw;\\
 II&:=\int_{4|y-z|<|w-y|<c\rho(y)}\Big|\Big(K^{\mathcal{L}}_{\alpha,t}(y,w)-K_{\alpha,t}(y,w)\Big)-\Big(
   K^{\mathcal{L}}_{\alpha,t}(z,w)-K_{\alpha,t}(z,w)\Big)\Big|dw;\\
 III&:= \int_{|w-y|<4|y-z|}\Big|\Big(K^{\mathcal{L}}_{\alpha,t}(y,w)-K_{\alpha,t}(y,w)\Big)-\Big(
   K^{\mathcal{L}}_{\alpha,t}(z,w)-K_{\alpha,t}(z,w)\Big)\Big|dw.
  \end{aligned}\right.
\end{equation*}

 Noticing that the estimate (\ref{t-1}) is valid for the classical fractional heat kernel. For $I$, by (\ref{t-1}), we can get
$$I\leq C\int_{|w-y|>c\rho(y)>4|y-z|}\frac{|y-z|^{\delta}}{|w-y|^{n+\delta}}dw\leq C\Big(\frac{r}{\rho(x)}\Big)^{\delta}.$$
For $II$, we apply Proposition \ref{pro2} and the fact that $\rho(w)\sim \rho(y)$ in the region of integration.
\begin{eqnarray*}
  II \leq C|y-z|^{\delta}\int_{4|y-z|<|w-y|<c\rho(y)}\frac{tdw}{\rho(w)^{\delta}(\sqrt{t^{1/\alpha}}+|w-y|)^{n+2\alpha}}
   \leq C\Big(\frac{r}{\rho(x)}\Big)^{\delta}.
\end{eqnarray*}

For $III$, since $|y-z|\leq 2r<\sqrt{t^{1/\alpha}},$ we have $|w-y|<C\sqrt{t^{1/\alpha}}$. For $n-\delta_{0}>0$, by Proposition \ref{pro1}, we obtain
\begin{eqnarray*}
  III &\leq&C\Big(\frac{\sqrt{t^{1/\alpha}}}{\rho(x)}\Big)^{\delta_{0}}\Bigg(\int_{|w-y|<4|y-z|}\frac{tdw}{(\sqrt{t^{1/\alpha}}+|w-y|)^{n+2\alpha}}+
  \int_{|w-z|\leq 5|y-z|}\frac{tdw}{(\sqrt{t^{1/\alpha}}+|w-z|)^{n+2\alpha}}\Bigg)  \\
   &\leq& C\Big(\frac{\sqrt{t^{1/\alpha}}}{\rho(x)}\Big)^{\delta_{0}}\int^{5|y-z|/\sqrt{t^{1/\alpha}}}_{0}u^{n-1}du
   \leq C\Big(\frac{\sqrt{t^{1/\alpha}}}{\rho(x)}\Big)^{\delta_{0}}\Big(\frac{|y-z|}{\sqrt{t^{1/\alpha}}}\Big)^{n}  \\
   &\leq&\frac{Cr^{n}}{\rho(x)^{\delta_{0}}(\sqrt{t^{1/\alpha}})^{n-\delta_{0}}}\leq \frac{Cr^{n}}{\rho(x)^{\delta_{0}}r^{n-\delta_{0}}}=C\Big(\frac{r}{\rho(x)}\Big)^{\delta_{0}}.
\end{eqnarray*}
 Thus, when $2r<\sqrt{t^{1/\alpha}}<\rho(x)$,
$$\Big|e^{-t\mathcal{L}^{\alpha}}1(y)-e^{-t\mathcal{L}^{\alpha}}1(z)\Big|\leq C\Big(\frac{r}{\rho(x)}\Big)^{\delta}.$$
Combining the above estimates, we can get
\begin{equation}\label{4.8}
  \Big\|e^{-t\mathcal{L}^{\alpha}}1(y)-e^{-t\mathcal{L}^{\alpha}}1(z)\Big\|_{L^{\infty}((0,\infty),dt)}\leq C\Big(\frac{r}{\rho(x)}\Big)^{\delta}.
\end{equation}
Therefore, it holds
$$\log\Big(\frac{\rho(x)}{r}\Big)\frac{1}{|B|}\int_{B}\Big\|e^{-t\mathcal{L}^{\alpha}}1(y)-(e^{-t\mathcal{L}^{\alpha}}1)_{B}\Big\|_{L^{\infty}((0,\infty),dt)}dy\leq C\Big(\frac{r}{\rho(x)}\Big)^{\delta}\log\Big(\frac{\rho(x)}{r}\Big)\leq C,$$
which is the first conclusion of (iii).

For the second estimate of (iii), take $\delta\in(\gamma,\ \min\{2\alpha,\ 2-n/q\})$. by (\ref{4.8}), we have
$$\Big(\frac{\rho(x)}{r}\Big)^{\gamma}\frac{1}{|B|}\int_{B}\Big\|e^{-t\mathcal{L}^{\alpha}}1(y)-(e^{-t\mathcal{L}^{\alpha}}1)_{B}\Big\|_{L^{\infty}((0,\infty),dt)}dy\leq C\Big(\frac{r}{\rho(x)}\Big)^{\delta-\gamma}\leq C.$$
\end{proof}
\subsection{Boundedness of the Littlewood-Paley $g$-function $g^{\mathcal{L}}_{\alpha}$}\label{sec-4-2}
Similar to Section \ref{sec-4-1}, we introduce the following function space:
\begin{definition}\label{defc-2}
Let $0<\gamma\leq 1$. The Campanato type space $BMO^{\gamma}_{ \mathcal{L},L^{2}((0,\infty),dt/t)}(\mathbb{R}^{n})$ is defined as the set of all  locally integrable functions $f$ satisfying
$$\|f\|_{BMO^{\gamma}_{\mathcal{L},L^{2}((0,\infty),dt)}}:=\sup_{B\subset\mathbb{R}^{n}}
\Bigg\{\frac{1}{|B|^{1+\gamma/n}}\int_{B}\|f(x,t)-f(B,V)\|_{L^{2}((0,\infty),dt/t)}dx\Bigg\}<\infty.$$
\end{definition}
The functional calculus and the spectral theorem imply that $g^{\mathcal{L}}_{\alpha}$ is an isometry on $L^{2}(\mathbb{R}^{n})$. As before, to get the boundedness of $g^{\mathcal{L}}_{\alpha}$ on $BMO^{\gamma}_{\mathcal{L}}(\mathbb R^{n})$, it is sufficient to prove the following result.
\begin{theorem}\label{the3.1}
Assume that the potential $V\in B_{q}$ with $q>n/2$. Let $x,y,z\in\mathbb{R}^{n}$ and $N>0$.
\item{\rm (i) } For any $N>0$, there exists a constant $C_{N}$ such that
$$\Big\|D^{\mathcal{L},m}_{\alpha,t}(x,y)\Big\|_{L^{2}((0,\infty),\frac{dt}{t})}\leq \frac{C_{N}}{|x-y|^{n}}\Big(1+\frac{|x-y|}{\rho(x)}+\frac{|x-y|}{\rho(y)}\Big)^{-N}.$$
\item{\rm (ii) } If $|x-y|>2|y-z|$ and $0<\delta<\min\{2\alpha,\delta_{0},1\}$, for any constant $N>0$, there exists a constant $C_{N}$ such that
    \begin{eqnarray}\label{t-2}
      &&\Big\|D^{\mathcal{L},m}_{\alpha,t}(x,y)-D^{\mathcal{L},m}_{\alpha,t}(x,z)\Big\|_{L^{2}((0,\infty),\frac{dt}{t})}+
    \Big\|D^{\mathcal{L},m}_{\alpha,t}(y,x)-D^{\mathcal{L},m}_{\alpha,t}(z,x)\Big\|_{L^{2}((0,\infty),\frac{dt}{t})}\\
       &&\quad \leq  \frac{C_{N}|y-z|^{\delta}}{|x-y|^{n+\delta}}.\nonumber
    \end{eqnarray}
\item{\rm (iii)} There exists a constant $C$ such that for every ball $B=B(x_{0},r)$ with $0<r\leq \rho(x)/2$,
$$\log\Big(\frac{\rho(x)}{r}\Big)\frac{1}{|B|}\int_{B}\Big\|t^{m}\partial_{t}^{m}e^{-t\mathcal{L}^{\alpha}}1(y)-
(t^{m}\partial_{t}^{m}e^{-t\mathcal{L}^{\alpha}}1)_{B}\Big\|_{L^{2}((0,\infty),\frac{dt}{t})}dy\leq C,$$
and, if $\gamma<\min\{2\alpha,\delta_{0},1\}$, then
$$\Big(\frac{\rho(x)}{r}\Big)^{\gamma}\frac{1}{|B|}\int_{B}\Big\|t^{m}\partial_{t}^{m}e^{-t\mathcal{L}^{\alpha}}1(y)-(t^{m}\partial_{t}^{m}
e^{-t\mathcal{L}^{\alpha}}1)_{B}\Big\|_{L^{2}((0,\infty),\frac{dt}{t})}dy\leq C.$$
\end{theorem}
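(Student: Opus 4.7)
The overall plan is to invoke the vector-valued $T1$ criterion (Lemma \ref{lem4.1} together with Theorems \ref{the1} and \ref{the2}) in the Banach space $\mathbb{B} = L^2((0,\infty), dt/t)$, viewing $g^{\mathcal{L}}_{\alpha} f$ as $\|D^{\mathcal{L},m}_{\alpha,t} f\|_{\mathbb{B}}$. Since $g^{\mathcal{L}}_{\alpha}$ is an isometry on $L^2(\mathbb{R}^n)$ by spectral theory, items (i)--(iii) of Theorem \ref{the3.1} exactly furnish the Schr\"{o}dinger-Calder\'{o}n-Zygmund kernel estimates and the $T1$-type assumption needed.

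For part (i), I would start from the pointwise bound of Proposition \ref{pro3.12}(i), square it and integrate against $dt/t$. After the substitution $s = t^{1/(2\alpha)}$ (so $dt/t = 2\alpha\, ds/s$), the resulting integral reduces to an elementary computation which, upon splitting at $s \sim |x-y|$ and invoking the $(1+s/\rho)^{-N}$ decay for large $s$, yields $C|x-y|^{-2n}(1+|x-y|/\rho(x)+|x-y|/\rho(y))^{-2N'}$ for any $N' < N$. Part (ii) is parallel, using Proposition \ref{pro3.12}(ii); the extra factor $(|y-z|/\sqrt{t^{1/\alpha}})^{\delta}$ combined with the split at $s \sim |x-y|$ produces the H\"{o}lder bound $|y-z|^\delta / |x-y|^{n+\delta}$, and the symmetric estimate in the first spatial variable uses the symmetry $K^\mathcal{L}_t(x,y) = K^\mathcal{L}_t(y,x)$.

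The $T1$-type estimate (iii) is the substantive step. By Minkowski's inequality it suffices to bound, for $y,z \in B = B(x,r)$ with $r \le \rho(x)/2$,
\[
\|G_{y,z}\|_{\mathbb{B}} := \Bigl\|t^m\partial_t^m e^{-t\mathcal{L}^\alpha} 1(y) - t^m\partial_t^m e^{-t\mathcal{L}^\alpha} 1(z)\Bigr\|_{\mathbb{B}}.
\]
I would split $(0,\infty)$ into three $t$-ranges according to the size of $\sqrt{t^{1/\alpha}}$ relative to $2r$ and $\rho(x)$. On $\sqrt{t^{1/\alpha}}\le 2r$, the identity $e^{-t(-\Delta)^\alpha}1\equiv 1$ combined with $m\ge 1$ allows us to invoke Proposition \ref{pro3.12}(iii), giving $|G_{y,z}(t)| \le C(\sqrt{t^{1/\alpha}}/\rho(x))^{\delta_0}$ and contributing $C(r/\rho(x))^{\delta_0}$ to $\|G_{y,z}\|_{\mathbb{B}}$. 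On $\sqrt{t^{1/\alpha}}>\rho(x)$, since $|y-z|\le 2r<\sqrt{t^{1/\alpha}}$, Proposition \ref{pro3.12}(ii) integrated in $w$ yields $|G_{y,z}(t)| \le C(2r/\sqrt{t^{1/\alpha}})^{\delta}$, again contributing $C(r/\rho(x))^{\delta}$. On the middle range $2r<\sqrt{t^{1/\alpha}}\le \rho(x)$ I would imitate the three-region $w$-splitting from the proof of Theorem \ref{the3}(iii)---namely $\{|w-y|>c\rho(y)\}$, $\{4|y-z|<|w-y|<c\rho(y)\}$, and $\{|w-y|<4|y-z|\}$---using respectively the H\"{o}lder estimate (\ref{t-2}) of part (ii) (applied in $\mathbb{B}$-norm via Minkowski), Proposition \ref{pro2-2-2} together with $\rho(w)\sim\rho(y)$, and Proposition \ref{pro1-1-1} (exploiting $\int D^m_{\alpha,t}(y-w)\,dw = 0$ for $m\ge 1$). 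The first two regions give pointwise-in-$t$ bounds of order $C(r/\rho(x))^{\delta}$, which after squaring and integrating $dt/t$ on this range yield $C(r/\rho(x))^{\delta}\sqrt{\log(\rho(x)/r)}$, while the third integrates cleanly to $C(r/\rho(x))^{\delta_0}$.

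Summing the three $t$-ranges yields $\|G_{y,z}\|_{\mathbb{B}}\le C(r/\rho(x))^{\delta}\sqrt{\log(\rho(x)/r)}\le C_\varepsilon(r/\rho(x))^{\delta-\varepsilon}$ for any $\varepsilon>0$. Both assertions of (iii) follow at once: the logarithmic version from $\log(\rho(x)/r)\cdot(r/\rho(x))^{\delta-\varepsilon} \to 0$ as $r\to 0$, and the polynomial version from the requirement $\delta-\varepsilon>\gamma$, which can be arranged since $\gamma<\min\{2\alpha,\delta_0,1\}$. The main obstacle will be the bookkeeping inside the middle $t$-range, where the three distinct $w$-regions yield different $t$-behaviors; in particular, the unavoidable $\sqrt{\log(\rho(x)/r)}$ loss must be absorbed into the decay exponent without dropping it below the threshold $\gamma$.
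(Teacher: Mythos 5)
Your proposal follows essentially the same route as the paper: parts (i) and (ii) by squaring the pointwise bounds of Proposition \ref{pro3.12} and integrating in $dt/t$ with a split at $\sqrt{t^{1/\alpha}}\sim|x-y|$, and part (iii) by the same three $t$-range decomposition (small, middle, large relative to $2r$ and $\rho$) with Proposition \ref{pro3.12}(iii), Proposition \ref{pro3.12}(ii), and, in the middle range, the same three $w$-region splitting using the H\"older bound, Proposition \ref{pro2-2-2} and Proposition \ref{pro1-1-1}, ending with the bound $C(r/\rho)^{\delta}(\log(\rho/r))^{1/2}$ from which both assertions follow by choosing $\delta>\gamma$. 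The argument is correct and matches the paper's proof in structure and in the lemmas invoked.
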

\begin{proof}
For (i), from Proposition \ref{pro3.12}, we have
$$\Big|D^{\mathcal{L},m}_{\alpha,t}(x,y)\Big|\leq C_{N}\min\Bigg\{\frac{t^{1+N/\alpha}}{|x-y|^{n+2\alpha+2N}},\ t^{-n/(2\alpha)} \Bigg\}\Big(1+\frac{\sqrt{t^{1/\alpha}}}{\rho(x)}+\frac{\sqrt{t^{1/\alpha}}}{\rho(y)}\Big)^{-N}.$$
If $\sqrt{t^{1/\alpha}}\leq |x-y|$, we obtain
\begin{eqnarray*}
   &&\Big\|D^{\mathcal{L},m}_{\alpha,t}(x,y)\Big\|_{L^{2}((0,\infty),\frac{dt}{t})}^{2}\\
   &&\leq C_{N}\int^{|x-y|^{2\alpha}}_{0}\frac{t^{2+2N/\alpha}}{|x-y|^{2n+4\alpha+4N}}
   \Big(1+\frac{\sqrt{t^{1/\alpha}}}{\rho(x)}+\frac{\sqrt{t^{1/\alpha}}}{\rho(y)}\Big)^{-2N}\frac{dt}{t} \\
   &&\leq C_{N}\Big(1+\frac{|x-y|}{\rho(x)}+\frac{|x-y|}{\rho(y)}\Big)^{-2N}\int^{|x-y|^{2\alpha}}_{0}
   \frac{(\sqrt{t^{1/\alpha}})^{4\alpha+4N}}{|x-y|^{2n+4\alpha+4N}}\Big(\frac{\sqrt{t^{1/\alpha}}}{|x-y|}\Big)^{-2N}\frac{dt}{t}  \\
   &&\leq \Big(1+\frac{|x-y|}{\rho(x)}+\frac{|x-y|}{\rho(y)}\Big)^{-2N}\frac{C_{N}}{|x-y|^{2n}}\int^{|x-y|^{2\alpha}}_{0}
   \Big(\frac{\sqrt{t^{1/\alpha}}}{|x-y|}\Big)^{4\alpha+2N}\frac{dt}{t}.
\end{eqnarray*}
Let ${\sqrt{t^{1/\alpha}}}/{|x-y|}=u$. We can see that
\begin{eqnarray*}
  \Big\|D^{\mathcal{L},m}_{\alpha,t}(x,y)\Big\|_{L^{2}((0,\infty),\frac{dt}{t})}^{2} &\leq&\Big(1+\frac{|x-y|}{\rho(x)}+\frac{|x-y|}{\rho(y)}\Big)^{-2N}\frac{C_{N}}{|x-y|^{2n}}\int^{1}_{0}u^{4\alpha+2N-1}du  \\
   &\leq&\frac{C_{N}}{|x-y|^{2n}}\Big(1+\frac{|x-y|}{\rho(x)}+\frac{|x-y|}{\rho(y)}\Big)^{-2N}.
\end{eqnarray*}
If $\sqrt{t^{1/\alpha}}\geq |x-y|$, we can get
\begin{eqnarray*}
 \Big\|D^{\mathcal{L},m}_{\alpha,t}(x,y)\Big\|^{2}_{L^{2}((0,\infty),\frac{dt}{t})}  &\leq&C_{N}\Big(1+\frac{|x-y|}{\rho(x)}+\frac{|x-y|}{\rho(y)}\Big)^{-2N}\int^{\infty}_{|x-y|^{2\alpha}}t^{-n/\alpha-1}dt  \\
   &\leq& \frac{C_{N}}{|x-y|^{2n}}\Big(1+\frac{|x-y|}{\rho(x)}+\frac{|x-y|}{\rho(y)}\Big)^{-2N}.
\end{eqnarray*}

For (ii), By Proposition \ref{pro3.12}, we have
\begin{eqnarray*}
 \Big\|D^{\mathcal{L},m}_{\alpha,t}(x,y)-D^{\mathcal{L},m}_{\alpha,t}(x,z)\Big\|^{2}_{L^{2}((0,\infty),\frac{dt}{t})}  &\leq& \int^{\infty}_{0}\frac{C_{N}t}{(\sqrt{t^{1/\alpha}}+|x-y|)^{2n+4\alpha}}\Big(
 \frac{|y-z|}{\sqrt{t^{1/\alpha}}}\Big)^{2\delta}dt \\
   &\leq& C_{N}|y-z|^{2\delta}\int^{\infty}_{0}\frac{(\sqrt{t^{1/\alpha}})^{2\alpha-2\delta}}{(\sqrt{t^{1/\alpha}}+|x-y|)^{2n+4\alpha}}dt.
\end{eqnarray*}
Let ${\sqrt{t^{1/\alpha}}}/{|x-y|}=u$. We obtain
\begin{eqnarray*}
  \Big\|D^{\mathcal{L},m}_{\alpha,t}(x,y)-D^{\mathcal{L},m}_{\alpha,t}(x,z)\Big\|^{2}_{L^{2}((0,\infty),\frac{dt}{t})} &\leq&C_{N}|y-z|^{2\delta}|x-y|^{-2n-2\delta}\int^{\infty}_{0}\frac{u^{4\alpha-2\delta-1}}{(1+u)^{2n+4\alpha}}du  \\
   &\leq& \frac{C_{N}|y-z|^{2\delta}}{|x-y|^{2n+2\delta}}.
\end{eqnarray*}
The symmetry of the kernel $D^{L,m}_{\alpha,t}(\cdot,\cdot)$ gives the conclusion of (ii).

For (iii), let us fix $y,z\in B=B(x_{0},r)$, $0<r\leq \rho(x_{0})/2$. Similar to Theorem \ref{the3}, we must handle $$\Big\|t^{m}\partial_{t}^{m}e^{-t\mathcal{L}^{\alpha}}1(y)-t^{m}\partial_{t}^{m}e^{-t\mathcal{L}^{\alpha}}1(z)\Big\|_{L^{2}((0,\infty),\frac{dt}{t})}.$$ We can write
\begin{eqnarray*}
  \Big\|t^{m}\partial_{t}^{m}e^{-t\mathcal{L}^{\alpha}}1(y)-t^{m}\partial_{t}^{m}e^{-t\mathcal{L}^{\alpha}}1(z)\Big\|^{2}_{L^{2}((0,\infty),\frac{dt}{t})}
   &=& M_{1}+M_{2}+M_{3},
\end{eqnarray*}
where
\begin{equation*}
\left\{  \begin{aligned}
 M_{1}&:=\int^{(2r)^{2\alpha}}_{0}\Big|
   \int_{\mathbb{R}^{n}}\Big(D^{\mathcal{L},m}_{\alpha,t}(x,y)-D^{\mathcal{L},m}_{\alpha,t}(x,z)\Big)dx\Big|^{2}\frac{dt}{t};\\
 M_{2}&:=\int^{\rho(x_{0})^{2\alpha}}_{(2r)^{2\alpha}}\Big|
   \int_{\mathbb{R}^{n}}\Big(D^{\mathcal{L},m}_{\alpha,t}(x,y)-D^{\mathcal{L},m}_{\alpha,t}(x,z)\Big)dx\Big|^{2}\frac{dt}{t};\\
 M_{3}&:=\int^{\infty}_{\rho(x_{0})^{2\alpha}}\Big|
   \int_{\mathbb{R}^{n}}\Big(D^{\mathcal{L},m}_{\alpha,t}(x,y)-D^{\mathcal{L},m}_{\alpha,t}(x,z)\Big)dx\Big|^{2}\frac{dt}{t}.
  \end{aligned}\right.
\end{equation*}

Since $y,z\in B\subset B(x_{0},\rho(x_{0}))$, it follows that $\rho(y)\sim\rho(x_{0})\sim\rho(z)$. By Proposition \ref{pro3.12} (iii),
$$M_{1}\leq C\int^{(2r)^{2\alpha}}_{0}\frac{(\sqrt{t^{1/\alpha}}/\rho(x_{0}))^{2\delta}}{(1+\sqrt{t^{1/\alpha}}/\rho(x_{0}))^{2N}}\frac{dt}{t}
\leq C\int^{(2r)^{2\alpha}}_{0}\Big(\frac{\sqrt{t^{1/\alpha}}}{\rho(x_{0})}\Big)^{2\delta}\frac{dt}{t}=C\Big(\frac{r}{\rho(x_{0})}\Big)^{2\delta}.$$
Also, by Proposition \ref{pro3.12}(ii),
\begin{eqnarray*}
  M_{3} &\leq& C\int^{\infty}_{\rho(x_{0})^{2\alpha}}\Big(\frac{|y-z|}{\sqrt{t^{1/\alpha}}}\Big)^{2\delta}\Big|\int_{\mathbb{R}^{n}}
  \frac{t}{(\sqrt{t^{1/\alpha}}+|x-y|)^{n+2\alpha}}dx\Big|^{2}\frac{dt}{t}  \\
   &\leq& C\int^{\infty}_{\rho(x_{0})^{2\alpha}}\Big(\frac{|y-z|}{\sqrt{t^{1/\alpha}}}\Big)^{2\delta}\frac{dt}{t}\leq C\Big(\frac{r}{\rho(x_{0})}\Big)^{2\delta}.
\end{eqnarray*}
It remains to estimate the term $M_{2}$. In this case, $|y-z|\leq 2r\leq \sqrt{t^{1/\alpha}}\leq \rho(x_{0})$. Then we can use the methods in Theorem \ref{the3},
\begin{eqnarray*}
  M_{2}
   =\int^{\rho(x_{0})^{2\alpha}}_{(2r)^{2\alpha}}\Big|M_{2,1}+M_{2,2}+M_{2,3}\Big|^{2}\frac{dt}{t},
\end{eqnarray*}
where
\begin{equation*}
 \left\{ \begin{aligned}
  M_{2,1}&:=\int_{|x-y|>c\rho(y)>4|y-z|}\Big(D^{\mathcal{L},m}_{\alpha,t}(x,y)-D^{m}_{\alpha,t}(x-y)\Big)-\Big(D^{\mathcal{L},m}_{\alpha,t}(x,z)-D^{m}_{\alpha,t}(x-z)\Big)dx;\\
  M_{2,2}&:=\int_{4|y-z|<|x-y|<c\rho(y)}\Big(D^{\mathcal{L},m}_{\alpha,t}(x,y)-D^{m}_{\alpha,t}(x-y)\Big)-\Big(D^{\mathcal{L},m}_{\alpha,t}(x,z)-D^{m}_{\alpha,t}(x-z)\Big)dx;\\
  M_{2,3}&:=\int_{|x-y|<4|y-z|}\Big(D^{\mathcal{L},m}_{\alpha,t}(x,y)-D^{m}_{\alpha,t}(x-y)\Big)-\Big(D^{\mathcal{L},m}_{\alpha,t}(x,z)-D^{m}_{\alpha,t}(x-z)\Big)dx.
\end{aligned}\right.
\end{equation*}

For $M_{2,1}$, similar to prove (\ref{t-2}), we also can get
$$\Big|D^{\mathcal{L},m}_{\alpha,t}(x,y)-D^{\mathcal{L},m}_{\alpha,t}(x,z)\Big|\leq \frac{C|y-z|^{\delta}}{|x-y|^{n+\delta}},$$
which is valid to $D^{m}_{\alpha,t}(\cdot)$. So we obtain
$$|M_{2,1}|\leq C\int_{|x-y|>c\rho(y)>4|y-z|}\frac{|y-z|^{\delta}}{|x-y|^{n+\delta}}dx\leq C\Big(\frac{r}{\rho(x_{0})}\Big)^{\delta}.$$
For $M_{2,2}$, by Proposition \ref{pro2-2-2}
and the fact that $\rho(x)\sim \rho(y)$ in the region of integration.
\begin{eqnarray*}
  |M_{2,2}| &\leq&C|y-z|^{\delta}\int_{4|y-z|<|x-y|<c\rho(y)}\frac{t}{\rho(x)^{\delta}(\sqrt{t^{1/\alpha}}+|x-y|)^{n+2\alpha}}dx \leq C\Big(\frac{r}{\rho(x_{0})}\Big)^{\delta}.
\end{eqnarray*}

For $M_{2,3}$, since $|y-z|\leq 2r<\sqrt{t^{1/\alpha}},$ we have $|x-y|<C\sqrt{t^{1/\alpha}}$. For $n-\delta_{0}>0$, by Proposition \ref{pro1-1-1},
we obtain
\begin{eqnarray*}
 | M_{2,3}| &\leq&C\Big(\frac{\sqrt{t^{1/\alpha}}}{\rho(x_{0})}\Big)^{\delta_{0}}\Bigg(\int_{|x-y|<4|y-z|}\frac{tdx}{(\sqrt{t^{1/\alpha}}+|x-y|)^{n+2\alpha}}+
  \int_{|x-z|\leq 5|y-z|}\frac{tdx}{(\sqrt{t^{1/\alpha}}+|x-z|)^{n+2\alpha}}\Bigg)  \\
   &\leq& C\Big(\frac{\sqrt{t^{1/\alpha}}}{\rho(x_{0})}\Big)^{\delta_{0}}\int^{5|y-z|/\sqrt{t^{1/\alpha}}}_{0}u^{n-1}du  \\
   &\leq&C\Big(\frac{\sqrt{t^{1/\alpha}}}{\rho(x_{0})}\Big)^{\delta_{0}}\Big(\frac{|y-z|}{\sqrt{t^{1/\alpha}}}\Big)^{n}  \\
   &\leq&\frac{Cr^{n}}{\rho(x_{0})^{\delta_{0}}(\sqrt{t^{1/\alpha}})^{n-\delta_{0}}}\leq C\Big(\frac{r}{\rho(x_{0})}\Big)^{\delta_{0}}.
\end{eqnarray*}
The estimates for $M_{2,i}, i=1,2,3$, imply that
$$M_{2}\leq \int^{\rho(x_{0})^{2\alpha}}_{(2r)^{2\alpha}}\Big(\frac{r}{\rho(x_{0})}\Big)^{2\delta}\frac{dt}{t}=C\Big(\frac{r}{\rho(x_{0})}\Big)^{2\delta}
\log\Big(\frac{\rho(x_{0})}{r}\Big).$$
Finally, we can get
$$\Big\|t^{m}\partial_{t}^{m}e^{-t\mathcal{L}^{\alpha}}1(y)-t^{m}\partial_{t}^{m}e^{-t\mathcal{L}^{\alpha}}1(z)\Big\|_{L^{2}((0,\infty),\frac{dt}{t})}\leq C\Big(\frac{r}{\rho(x_{0})}\Big)^{\delta}\Big(\log(\frac{\rho(x_{0})}{r})\Big)^{1/2}.$$
Thus (iii) readily follows.

\end{proof}

\subsection{Boundedness of Littlewood-Paley $g$-function  $\widetilde{g}^{\mathcal{L}}_{\alpha}$}\label{sec-4-2}
 By the $L^{2}$-boundedness of Riesz transforms $\nabla_{x}\mathcal{L}^{-1/2}$, we can see that
 $$\|\widetilde{g}^{\mathcal{L}}_{\alpha}f\|^{2}_{L^{2}}\leq C\int^{\infty}_{0}\Big(\int_{\mathbb{R}^{n}}|t^{1/2\alpha}\mathcal{L}^{1/2}e^{-t\mathcal{L}^{\alpha}}f(x)|^{2}dx\Big)\frac{dt}{t}.$$
 Then by the spectral theorem, we know that $\widetilde{g}^{\mathcal{L}}_{\alpha}$ is bounded from $L^{2}(\mathbb{R}^{n})$ into $L^{2}(\mathbb{R}^{n})$.

\begin{theorem}\label{the3.2}
Assume that the potential $V\in B_{q}$ with $q>n$. Let $x,y,z\in\mathbb{R}^{n}$. Then
\item{\rm (i) } For any $N>0$, there exists a constant $C_{N}$ such that
$$\Big\|\widetilde{D}^{\mathcal{L}}_{\alpha,t}(x,y)\Big\|_{L^{2}((0,\infty),\frac{dt}{t})}\leq \frac{C_{N}}{|x-y|^{n}}\Big(1+\frac{|x-y|}{\rho(x)}+\frac{|x-y|}{\rho(y)}\Big)^{-N};$$
\item{\rm (ii) } Let  $|x-y|>2|y-z|$ and $0<\delta<\min\{2\alpha,\delta_{1},1\}$. For any $N>0$, there exists a constant $C_{N}$ such that $$\Big\|\widetilde{D}^{\mathcal{L}}_{\alpha,t}(x,y)-\widetilde{D}^{\mathcal{L}}_{\alpha,t}(x,z)\Big\|_{L^{2}((0,\infty),\frac{dt}{t})}+
    \Big\|\widetilde{D}^{\mathcal{L}}_{\alpha,t}(y,x)-\widetilde{D}^{\mathcal{L}}_{\alpha,t}(z,x)\Big\|_{L^{2}((0,\infty),\frac{dt}{t})}\leq
\frac{C_{N}|y-z|^{\delta}}{|x-y|^{n+\delta}};$$
\item{\rm (iii)} There exists a constant $C$ such that for every ball $B=B(x_{0},r)$ with $0<r\leq \rho(x)/2$,
$$\log\Big(\frac{\rho(x)}{r}\Big)\frac{1}{|B|}\int_{B}\Big\|t^{1/(2\alpha)}\nabla_{x}
e^{-t\mathcal{L}^{\alpha}}1(y)-(t^{1/(2\alpha)}\nabla_{x}e^{-t\mathcal{L}^{\alpha}}1)_{B}\Big\|_{L^{2}((0,\infty),\frac{dt}{t})}dy\leq C,$$
and, if $\gamma<\min\{2\alpha,\delta_{1},1\}$ then
$$\Big(\frac{\rho(x)}{r}\Big)^{\gamma}\frac{1}{|B|}\int_{B}\Big\|t^{1/(2\alpha)}\nabla_{x}
e^{-t\mathcal{L}^{\alpha}}1(y)-(t^{1/(2\alpha)}\nabla_{x}e^{-t\mathcal{L}^{\alpha}}1)_{B}\Big\|_{L^{2}((0,\infty),\frac{dt}{t})}dy\leq C.$$
\end{theorem}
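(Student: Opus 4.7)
The plan is to follow the same three-step pattern used in Theorems \ref{the3} and \ref{the3.1}, substituting the gradient kernel $\widetilde{D}^{\mathcal{L}}_{\alpha,t}$ for $D^{\mathcal{L},m}_{\alpha,t}$ and invoking Propositions \ref{pro3.13}, \ref{pro1-1}, and \ref{pro2-2} in place of the corresponding size/regularity results for $D^{\mathcal{L},m}_{\alpha,t}$. Once (i)--(iii) are established, the $BMO^{\gamma}_{\mathcal{L}}$-boundedness of $\widetilde{g}^{\mathcal{L}}_{\alpha}$ will follow from the vector-valued $T1$ criterion (Theorems \ref{the1}, \ref{the2} and Lemma \ref{lem4.1}) applied to the operator $f\mapsto \{\widetilde{D}^{\mathcal{L}}_{\alpha,t}f\}_{t>0}$ viewed as a map into $L^{2}((0,\infty),dt/t)$, noting that the $L^2$-boundedness was already sketched right before the theorem.

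For (i), I would start from Proposition \ref{pro3.13}(i), which yields the two-sided size estimate
\[
\bigl|\widetilde{D}^{\mathcal{L}}_{\alpha,t}(x,y)\bigr|\leq C_N \min\Bigl\{\tfrac{t^{1+N/\alpha}}{|x-y|^{n+2\alpha+2N}},\ t^{-n/(2\alpha)}\Bigr\}\Bigl(1+\tfrac{\sqrt{t^{1/\alpha}}}{\rho(x)}+\tfrac{\sqrt{t^{1/\alpha}}}{\rho(y)}\Bigr)^{-N},
\]
then split the integral in $dt/t$ at $t=|x-y|^{2\alpha}$ and change variables $u=\sqrt{t^{1/\alpha}}/|x-y|$ exactly as in the proof of Theorem \ref{the3.1}(i); the $u$-integrals converge because the exponent is strictly positive at $0$ and strictly negative at $\infty$. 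For (ii), Proposition \ref{pro3.13}(ii) gives the H\"older increment bounded by $(|y-z|/\sqrt{t^{1/\alpha}})^{\delta_1}$ times the size kernel, and squaring, integrating $dt/t$ and rescaling $u=\sqrt{t^{1/\alpha}}/|x-y|$ produces $|y-z|^{2\delta}/|x-y|^{2n+2\delta}$ for any $0<\delta<\min\{2\alpha,\delta_1,1\}$ (the weight factor $(1+\cdots)^{-2N}$ is harmless). Symmetry of $\widetilde{D}^{\mathcal{L}}_{\alpha,t}(\cdot,\cdot)$ is not automatic since gradients act on $x$, but the same estimates apply to the $y$-variable gradient using the analogous regularity in the second argument, so both norms in the statement can be handled in parallel.

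The main obstacle is (iii). Fix $y,z\in B=B(x_0,r)$ with $r\leq \rho(x_0)/2$, so $\rho(y)\sim\rho(z)\sim\rho(x_0)$, and decompose
\[
\bigl\|t^{1/(2\alpha)}\nabla_x e^{-t\mathcal{L}^\alpha}1(y)-t^{1/(2\alpha)}\nabla_x e^{-t\mathcal{L}^\alpha}1(z)\bigr\|^{2}_{L^{2}((0,\infty),dt/t)} = M_1+M_2+M_3,
\]
where $M_1$, $M_2$, $M_3$ are the integrals over $(0,(2r)^{2\alpha}]$, $((2r)^{2\alpha},\rho(x_0)^{2\alpha}]$, $(\rho(x_0)^{2\alpha},\infty)$ respectively. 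For $M_1$ I would apply Proposition \ref{pro3.13}(iii), which requires the range $\alpha\in(0,1/2-n/2q)$, to obtain $\int \widetilde{D}^{\mathcal{L}}_{\alpha,t}(y,x)\,dx = O((\sqrt{t^{1/\alpha}}/\rho(x_0))^{1+2\alpha})$, yielding $M_1 \leq C(r/\rho(x_0))^{2(1+2\alpha)}$ after integrating $dt/t$ over $t\leq(2r)^{2\alpha}$; the same for $z$. For $M_3$, Proposition \ref{pro3.13}(ii) combined with integrating the size kernel over $\mathbb{R}^n$ gives $|y-z|^\delta/\sqrt{t^{1/\alpha}}^\delta$, which integrates in $dt/t$ against $t\geq\rho(x_0)^{2\alpha}$ to $(r/\rho(x_0))^{2\delta}$.

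The delicate piece is $M_2$, where $|y-z|\leq 2r\leq \sqrt{t^{1/\alpha}}\leq \rho(x_0)$. Here I would mimic the three-region splitting of the proof of Theorem \ref{the3}(iii): subtract the free kernel $\widetilde{D}_{\alpha,t}(x-y)$ from $\widetilde{D}^{\mathcal{L}}_{\alpha,t}(x,y)$ (and similarly at $z$), exploiting the cancellation $\int \widetilde{D}_{\alpha,t}(x-y)\,dx=0$ to write the inner integral as $\int[(\widetilde{D}^{\mathcal{L}}_{\alpha,t}(x,y)-\widetilde{D}_{\alpha,t}(x-y))-(\widetilde{D}^{\mathcal{L}}_{\alpha,t}(x,z)-\widetilde{D}_{\alpha,t}(x-z))]\,dx$. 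This I would split as $M_{2,1}+M_{2,2}+M_{2,3}$ over the regions $\{|x-y|>c\rho(y)\}$, $\{4|y-z|<|x-y|<c\rho(y)\}$, $\{|x-y|<4|y-z|\}$. The first region uses the $|y-z|^\delta/|x-y|^{n+\delta}$ estimate from (ii); the second uses Proposition \ref{pro2-2} directly (with $0<\delta'<\delta_1$); the third is handled by Proposition \ref{pro1-1} combined with $n-\delta_0>0$ and $|y-z|\leq\sqrt{t^{1/\alpha}}$. Each region contributes $C(r/\rho(x_0))^\delta$ to the inner integral, and integrating the square in $dt/t$ over $((2r)^{2\alpha},\rho(x_0)^{2\alpha})$ produces the extra factor $\log(\rho(x_0)/r)$. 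Summing,
\[
\bigl\|t^{1/(2\alpha)}\nabla_x e^{-t\mathcal{L}^\alpha}1(y)-t^{1/(2\alpha)}\nabla_x e^{-t\mathcal{L}^\alpha}1(z)\bigr\|_{L^{2}((0,\infty),dt/t)}\leq C \Bigl(\tfrac{r}{\rho(x_0)}\Bigr)^\delta\bigl(\log(\rho(x_0)/r)\bigr)^{1/2},
\]
and the two claims of (iii) follow from the elementary fact that $s^\delta \log(1/s)$ and $s^\delta\log(1/s)^{1/2}\cdot s^{-\gamma}$ are bounded on $(0,1/2]$ for $0<\gamma<\delta<\min\{2\alpha,\delta_1,1\}$. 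The trickiest subtlety will be verifying that $\widetilde{D}_{\alpha,t}(x-y)$ indeed has zero integral (which follows because $\nabla_x e^{-t(-\Delta)^\alpha}1=0$), and keeping track of the fact that Proposition \ref{pro3.13}(iii) constrains $\alpha$ to $(0,1/2-n/2q)$ while the other parts only need $\alpha\in(0,1)$.
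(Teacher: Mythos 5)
Your proposal is correct and follows essentially the same route as the paper's proof: parts (i) and (ii) via the size and H\"older estimates of Proposition \ref{pro3.13} with the change of variables $u=\sqrt{t^{1/\alpha}}/|x-y|$, and part (iii) via the time split at $(2r)^{2\alpha}$ and $\rho(x_0)^{2\alpha}$, using Proposition \ref{pro3.13}(iii) for small $t$, Proposition \ref{pro3.13}(ii) for large $t$, and the three-region spatial decomposition with Propositions \ref{pro1-1} and \ref{pro2-2} for the middle range, exactly as the paper does by referring back to the $M_2$ treatment in Theorem \ref{the3.1}. Your explicit remarks on the zero integral of the free kernel $\widetilde{D}_{\alpha,t}$, the non-symmetry of the gradient kernel in (ii), and the restriction $\alpha\in(0,1/2-n/(2q))$ inherited from Proposition \ref{pro3.13}(iii) are points the paper passes over silently, but they do not change the argument.
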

\begin{proof}
For (i), from Proposition \ref{pro3.13}, we have
$$\Big|\widetilde{D}^{\mathcal{L}}_{\alpha,t}(x,y)\Big|\leq C_{N}\min\Bigg\{\frac{t^{1+N/\alpha+1/(2\alpha)}}{|x-y|^{n+2\alpha+2N+1}},\ t^{-n/(2\alpha)} \Bigg\}\Big(1+\frac{\sqrt{t^{1/\alpha}}}{\rho(x)}+\frac{\sqrt{t^{1/\alpha}}}{\rho(y)}\Big)^{-N}.$$
If $\sqrt{t^{1/\alpha}}\leq |x-y|$, we obtain
\begin{eqnarray*}
   \Big\|\widetilde{D}^{\mathcal{L}}_{\alpha,t}(x,y)\Big\|_{L^{2}((0,\infty),\frac{dt}{t})}^{2}&\leq& C_{N}\int^{|x-y|^{2\alpha}}_{0}\frac{t^{2+2N/\alpha+1/\alpha}}{|x-y|^{2n+4\alpha+4N+2}}
   \Big(1+\frac{\sqrt{t^{1/\alpha}}}{\rho(x)}+\frac{\sqrt{t^{1/\alpha}}}{\rho(y)}\Big)^{-2N}\frac{dt}{t} \\
   &\leq&C_{N}\Big(1+\frac{|x-y|}{\rho(x)}+\frac{|x-y|}{\rho(y)}\Big)^{-2N}\int^{|x-y|^{2\alpha}}_{0}
   \frac{(\sqrt{t^{1/\alpha}})^{4\alpha+4N+2}}{|x-y|^{2n+4\alpha+4N+2}}\Big(\frac{\sqrt{t^{1/\alpha}}}{|x-y|}\Big)^{-2N}\frac{dt}{t}  \\
   &\leq& \Big(1+\frac{|x-y|}{\rho(x)}+\frac{|x-y|}{\rho(y)}\Big)^{-2N}\frac{C_{N}}{|x-y|^{2n}}\int^{|x-y|^{2\alpha}}_{0}
   \Big(\frac{\sqrt{t^{1/\alpha}}}{|x-y|}\Big)^{4\alpha+2N+2}\frac{dt}{t}.
\end{eqnarray*}
Let $\sqrt{t^{1/\alpha}}/|x-y|=u$. We can see that
\begin{eqnarray*}
  \Big\|\widetilde{D}^{\mathcal{L}}_{\alpha,t}(x,y)\Big\|_{L^{2}((0,\infty),\frac{dt}{t})}^{2} &\leq&\Big(1+\frac{|x-y|}{\rho(x)}+\frac{|x-y|}{\rho(y)}\Big)^{-2N}\frac{C_{N}}{|x-y|^{2n}}\int^{1}_{0}u^{4\alpha+2N+1}du  \\
   &\leq&\frac{C_{N}}{|x-y|^{2n}}\Big(1+\frac{|x-y|}{\rho(x)}+\frac{|x-y|}{\rho(y)}\Big)^{-2N}.
\end{eqnarray*}
If $\sqrt{t^{1/\alpha}}\geq |x-y|$, we can get
\begin{eqnarray*}
 \Big\|\widetilde{D}^{\mathcal{L}}_{\alpha,t}(x,y)\Big\|^{2}_{L^{2}((0,\infty),\frac{dt}{t})}  &\leq&C_{N}\Big(1+\frac{|x-y|}{\rho(x)}+\frac{|x-y|}{\rho(y)}\Big)^{-2N}\int^{\infty}_{|x-y|^{2\alpha}}t^{-n/\alpha-1}dt  \\
   &\leq& \frac{C_{N}}{|x-y|^{2n}}\Big(1+\frac{|x-y|}{\rho(x)}+\frac{|x-y|}{\rho(y)}\Big)^{-2N},
\end{eqnarray*}
which proves (i).

For (ii), By Proposition \ref{pro3.13}, we have
\begin{eqnarray*}
 \Big\|\widetilde{D}^{\mathcal{L}}_{\alpha,t}(x,y)-\widetilde{D}^{\mathcal{L}}_{\alpha,t}(x,z)\Big\|^{2}_{L^{2}((0,\infty),\frac{dt}{t})}  &\leq& \int^{\infty}_{0}\frac{C_{N}t}{(\sqrt{t^{1/\alpha}}+|x-y|)^{2n+4\alpha}}\Big(
 \frac{|y-z|}{\sqrt{t^{1/\alpha}}}\Big)^{2\delta}dt \\
   &\leq& C_{N}|y-z|^{2\delta}\int^{\infty}_{0}\frac{(\sqrt{t^{1/\alpha}})^{2\alpha-2\delta}}{(\sqrt{t^{1/\alpha}}+|x-y|)^{2n+4\alpha}}dt.
\end{eqnarray*}
Let ${\sqrt{t^{1/\alpha}}}/{|x-y|}=u$. We obtain
\begin{eqnarray*}
  \Big\|\widetilde{D}^{\mathcal{L}}_{\alpha,t}(x,y)-\widetilde{D}^{\mathcal{L}}_{\alpha,t}(x,z)\Big\|^{2}_{L^{2}((0,\infty),\frac{dt}{t})} &\leq&C_{N}|y-z|^{2\delta}|x-y|^{-2n-2\delta}\int^{\infty}_{0}\frac{u^{4\alpha-2\delta-1}}{(1+u)^{2n+4\alpha}}du  \\
   &\leq& \frac{C_{N}|y-z|^{2\delta}}{|x-y|^{2n+2\delta}}.
\end{eqnarray*}
The symmetry of the kernel $D^{\mathcal{L}}_{\alpha,t}(\cdot,\cdot)$ gives the desired conclusion of (ii).

For (iii), fix $y,z\in B=B(x_{0},r)$ with $0<r\leq \rho(x_{0})/2$. Similar to Theorem \ref{the3.1}, we need to deal with the term $$\Big\|t^{1/(2\alpha)}\nabla_{x}e^{-t\mathcal{L}^{\alpha}}1(y)-t^{1/(2\alpha)}\nabla_{x}e^{-t\mathcal{L}^{\alpha}}1(z)\Big\|_{L^{2}((0,\infty),\frac{dt}{t})}$$ first. Write
\begin{eqnarray*}
  \Big\|t^{1/(2\alpha)}\nabla_{x}e^{-t\mathcal{L}^{\alpha}}1(y)-t^{1/(2\alpha)}\nabla_{x}e^{-t\mathcal{L}^{\alpha}}1(z)\Big\|^{2}_{L^{2}((0,\infty),\frac{dt}{t})}
  &=& G_{1}+G_{2}+G_{3},
\end{eqnarray*}
where
\begin{equation*}
 \left\{ \begin{aligned}
 G_{1}&:=\int^{(2r)^{2\alpha}}_{0}\Big|\int_{\mathbb{R}^{n}}\Big(\widetilde{D}^{\mathcal{L}}_{\alpha,t}(x,y)-\widetilde{D}^{\mathcal{L}}_{\alpha,t}(x,z)\Big)dx\Big|^{2}\frac{dt}{t};\\
 G_{2}&:=\int^{\rho(x_{0})^{2\alpha}}_{(2r)^{2\alpha}}\Big|\int_{\mathbb{R}^{n}}\Big(\widetilde{D}^{\mathcal{L}}_{\alpha,t}(x,y)-\widetilde{D}^{\mathcal{L}}_{\alpha,t}(x,z)\Big)dx\Big|^{2}\frac{dt}{t};\\
 G_{3}&:=\int^{\infty}_{\rho(x_{0})^{2\alpha}}\Big|\int_{\mathbb{R}^{n}}\Big(\widetilde{D}^{\mathcal{L}}_{\alpha,t}(x,y)-\widetilde{D}^{\mathcal{L}}_{\alpha,t}(x,z)\Big)dx\Big|^{2}\frac{dt}{t}. \end{aligned}\right.
\end{equation*}

Since $y,z\in B\subset B(x_{0},\rho(x_{0}))$, then $\rho(y)\sim\rho(x_{0})\sim\rho(z)$. It follows from Proposition \ref{pro3.13} (iii) that
$$G_{1}\leq C\int^{(2r)^{2\alpha}}_{0}(\sqrt{t^{1/\alpha}}/\rho(x_{0}))^{1+2\alpha}\frac{dt}{t}
\leq C\int^{(2s)^{2\alpha}}_{0}\Big(\frac{\sqrt{t^{1/\alpha}}}{\rho(x_{0})}\Big)^{2\delta}\frac{dt}{t}=C\Big(\frac{r}{\rho(x_{0})}\Big)^{2\delta}.$$
Also, we apply Proposition \ref{pro3.13} (ii) to deduce that
\begin{eqnarray*}
  G_{3} &\leq& C\int^{\infty}_{\rho(x_{0})^{2\alpha}}\Big(\frac{|y-z|}{\sqrt{t^{1/\alpha}}}\Big)^{2\delta}\Big|\int_{\mathbb{R}^{n}}
  \frac{t}{(\sqrt{t^{1/\alpha}}+|x-y|)^{n+2\alpha}}dx\Big|^{2}\frac{dt}{t}  \\
   &\leq& C\int^{\infty}_{\rho(x_{0})^{2\alpha}}\Big(\frac{|y-z|}{\sqrt{t^{1/\alpha}}}\Big)^{2\delta}\frac{dt}{t}\leq C\Big(\frac{r}{\rho(x_{0})}\Big)^{2\delta}.
\end{eqnarray*}
Then for $G_{2}$, following the procedure of the treatment for $M_{2}$ in Theorem \ref{the3.1}, we obtain
$$G_{2}\leq \int^{\rho(x_{0})^{2\alpha}}_{(2r)^{2\alpha}}\Big(\frac{r}{\rho(x_{0})}\Big)^{2\delta}\frac{dt}{t}=C\Big(\frac{r}{\rho(x_{0})}\Big)^{2\delta}
\log\Big(\frac{\rho(x_{0})}{r}\Big).$$

From the above estimates, we can get
$$\Big\|t^{1/(2\alpha)}\nabla_{x}e^{-t\mathcal{L}^{\alpha}}1(y)-t^{1/(2\alpha)}\nabla_{x}e^{-t\mathcal{L}^{\alpha}}1(z)\Big\|^{2}_{L^{2}((0,\infty),\frac{dt}{t})}
\leq C\Big(\frac{r}{\rho(x_{0})}\Big)^{\delta}\Big(\log(\frac{\rho(x_{0})}{r})\Big)^{1/2}.$$
Thus (iii) readily follows.

\end{proof}


%
%



\end{document}